\renewcommand{\section}{\@startsection%
  {section}
  {1}
  {0em}
  {\baselineskip}
  {0.5\baselineskip}
  {\normalfont\large\bfseries}}
\renewcommand{\subsection}{\@startsection%
  {subsection}
  {2}
  {0em}
  {\baselineskip}
  {0.25\baselineskip}
  {\normalfont\bfseries}
}
\newtheorem{theorem}{Theorem}
\newtheorem*{notation*}{Notation}
\newtheorem{lemma}{Lemma}
\newtheorem{corollary}{Corollary}
\newtheorem{definition}{Definition}
\newtheorem{remark}{Remark}
\newtheorem{proposition}{Proposition}
\begin{document}

\title{\bf Locality in Network Optimization\thanks{Supported in 
part by the NSF Grant ECCS-1609484.}}

\author{Patrick Rebeschini\thanks{P.~Rebeschini was with the Department of Electrical Engineering, Yale University, New Haven, CT 06511, USA. He is now with the Department of Statistics, University of Oxford, Oxford, OX1 3LB, UK (e-mail: patrick.rebeschini@stats.ox.ac.uk).} \and Sekhar Tatikonda\thanks{S.~Tatikonda is with the Department of Statistics and Data Science, Yale University, New Haven, CT 06511, USA (e-mail: sekhar.tatikonda@yale.edu).}}

\date{}

\maketitle

\begin{abstract} 
In probability theory and statistics notions of correlation among random variables, decay of correlation, and bias-variance trade-off are fundamental. In this work we introduce analogous notions in optimization, and we show their usefulness in a concrete setting. We propose a general notion of correlation among variables in optimization procedures that is based on the sensitivity of optimal points upon (possibly finite) perturbations. We present a canonical instance in network optimization (the min-cost network flow problem) that exhibits locality, i.e., a setting where the correlation decays as a function of the graph-theoretical distance in the network. In the case of warm-start reoptimization, we develop a general approach to localize a given optimization routine in order to exploit locality. We show that the localization mechanism is responsible for introducing a bias in the original algorithm, and that the bias-variance trade-off that emerges can be exploited to minimize the computational complexity required to reach a prescribed level of error accuracy. We provide numerical evidence to support our claims.

\smallskip\smallskip\noindent
{\footnotesize\textbf{Keywords:} sensitivity of optimal points, decay of correlation, bias-variance, network flow, Laplacian, Green's function.}
\end{abstract}

\smallskip


\section{Introduction}
\label{sec:introduction}

Many problems in machine learning, networking, control, and statistics can be naturally posed in the framework of network-structured convex optimization. Given the huge problem size involved in modern applications, a lot of efforts have been devoted to the development and analysis of algorithms where the computation and communication are distributed over the network. 
A crucial challenge remains that of identifying the structural amount of information that each computation node needs to receive from the network to yield an approximate solution with a certain accuracy.

The literature on distributed algorithms in network optimization is gigantic, with much of the earlier seminal work contained in the textbook \cite{BT97}. More recent work that explicitly relate the convergence behavior of the algorithms being considered to the network topology and size include --- but it is certainly not limited to --- \cite{Johansson:2009,MRS10,SAW12} for first-order methods, and \cite{WOJ13a,WOJ13b} for second-order methods. Distributed algorithms are iterative in nature. In their synchronous implementations, at every iteration of the algorithm each computation node processes local information that come from its neighbors. Convergence analysis yields the number of iterations that guarantee these algorithms to meet a prescribed level of error accuracy. In turns, these results translate into bounds on the total amount of information processed by each node, as $k$ iterations of the algorithm means that each node receives information coming from nodes that are at most $k$-hops away in the network. As different algorithms yield different rates of convergence, the bounds on the propagation of information that are so-derived are algorithm-dependent (and also depend on the error analysis being considered). As such, they do not capture structural properties of the optimization problem.

The main aim of this paper is to propose a general notion of ``correlation" among variables in network optimization procedures that fundamentally characterizes the propagation of information across the network. This notion is based on the sensitivity of the optimal solution of the optimization problem upon perturbations of parameters locally supported on the network. This notion can be used to investigate ``locality,'' by which we mean problem instances where the correlation decays as a function of the natural distance in the network, so that, effectively, information only propagates across local portions of it. The phenomenon of locality characterizes situations where local perturbations are confined inside small regions of the network. How small the affected regions are, it depends on the particular problem instance at hand, and on the underlying graph topology. If locality is present, one could hope to exploit it algorithmically. In the case of localized perturbations, for instance, one could hope to localize known optimization routines so that only the affected regions are updated, hence yielding computational savings. While such a localization mechanism would introduce a structural ``bias'' with respect to the original algorithm --- which updates every node in the network and not only the ones that are mostly affected by the perturbation --- one could hope to exploit the ``reduction in the variance'' to achieve a prescribed level of error accuracy with a lower computational complexity. In the following we make all of this precise.

In this paper we define general notions of correlation, decay of correlation (locality), and bias-variance decomposition and trade-off in optimization, and we consider a concrete setting to illustrate their usefulness.
The results that we present should be seen as an implementation of a general agenda that can be followed to establish and exploit locality in more general instances of network optimization.

This paper presents four main contributions discussed in separate sections, which we now summarize. Proofs are in the Appendices.

\textbf{1) Sensitivity of optimal points.} In Section \ref{sec:Local sensitivity for equality-constrained convex minimization} we present a theory on the sensitivity of optimal points for smooth convex problems with linear equality constraints upon (possibly finite) perturbations. We consider the problem of minimizing a smooth convex function $x\rightarrow f(x)$ subject to $Ax = b$, for a certain matrix $A$ and vector $b\in\operatorname{Im}(A)$, where $\operatorname{Im}(A)$ denotes the image of $A$. We consider this problem as a function of the constraint vector $b$. We prove that if $f$ is strongly convex, then the optimal point $b \rightarrow x^\star(b)$ is continuously differentiable along $\operatorname{Im}(A)$, and we explicitly characterize the effect that perturbations have on the optimal solution as a function of the objective function $f$, the constraint matrix $A$ and vector $b$. Given a differentiable function $\varepsilon\in\mathbb{R} \rightarrow b(\varepsilon)\in \operatorname{Im}(A)$, we show that the quantity $\frac{d x^\star(b(\varepsilon))}{d\varepsilon}$ is a function of the Moore-Penrose pseudoinverse of the matrix $A\Sigma(b(\varepsilon))A^T$, where $A^T$ is the transpose of $A$, and where $\Sigma(b)$ denotes the inverse of the Hessian of $f$ evaluated at $x^\star(b)$, namely, $\Sigma(b):=\nabla^2 f(x^\star (b))^{-1}$.
The literature on the sensitivity of optimal points (see Section \ref{sec:Local sensitivity for equality-constrained convex minimization} for a list of references) is typically only concerned with establishing infinitesimal perturbations locally, i.e., on a neighborhood of a certain $b\in\operatorname{Im}(A)$. On the other hand, the results that we present extend to \emph{finite} perturbations as well, as we prove that the derivatives of the optimal point are continuous along $\operatorname{Im}(A)$, and hence can be integrated to deal with finite perturbations.
The workhorse behind our results is Hadamard's global inverse function theorem \cite{krantz2002implicit}, which in our setup yields necessary and sufficient conditions for the inverse of the KKT map to be continuously differentiable.
Proofs are in Appendix \ref{sec:Hadamard global inverse theorem}.\\
\indent\textbf{2) Notions of correlation in optimization.} 
In Section \ref{sec:Notions of correlation in optimization} we provide an interpretation of the sensitivity theory previously developed in terms of notions of correlation among variables in optimization procedures, resembling analogous notions in probability theory. If the matrix $A$ is full row rank, for instance, then the quantity $\frac{\partial x^\star(b)_i}{\partial b_a}$ is well-defined and describes how much $x^\star(b)_i$ --- the $i$-th component of the optimal solution $x^\star(b)$ --- changes upon perturbation of $b_a$ --- the $a$-th component of the constraint vector $b$. We interpret $\frac{\partial x^\star(b)_i}{\partial b_a}$ as a measure of the correlation between variables $i$ and $a$ in the optimization problem, and we are interested in understanding how this quantity behaves as a function of the geodesic distance between $i$ and $a$. We motivate this terminology by establishing an analogy with the theory of correlations in probability, via the connection with Gaussian random variables (proofs are in Appendix \ref{app:Correlation}). 
We extend the notion of correlation beyond infinitesimal perturbations, and we show how our theory yields a first instance of comparison theorems for constrained optimization procedures, along the lines of the comparison theorems established in probability theory to capture stochastic decay of correlation and control the difference of high-dimensional distributions \cite{Dob70,RvH14}.\\
\indent In probability theory, decay of correlation characterizes the effective neighborhood dependency of random variables in a probabilistic network. Since the seminal work of Dobrushin \cite{Dob70}, this concept has found many applications beyond statistical physics. Recently, it has been used to develop and prove convergence guarantees for fast distributed local algorithms for inference and decision problems on large networks in a wide variety of domains, for instance, probabilistic marginal inference \cite{TJ02}, wireless communication \cite{W06}, network learning \cite{BMS08}, combinatorial optimization \cite{GGW14}, and nonlinear filtering \cite{RvH15}. However, even in applications where the underlying problem does not involve randomness, decay of correlation is typically established upon endowing the model with a probabilistic structure, hence modifying the original problem formulation. Our results in network optimization, instead, show how locality can be described in a purely non-random setting, as a structural property of the original optimization problem. To the best of our knowledge, non-random notions of correlation in optimization have been previously considered only in \cite{MVR10}, where the authors explicitly use the word ``correlation" to denote the sensitivity of optimal points with respect to localized perturbations, and they analyze the correlation as a function of the natural distance in the graph. However, in their work correlation is regarded as a tool to prove convergence guarantees for the specific algorithm at hand (Min-Sum message-passing to solve \emph{unconstrained} convex problems), and no general theory is built around it.\\
\indent\textbf{3) Locality: decay of correlation.} As a paradigm for network optimization, in Section \ref{sec:Optimal Network Flow} we consider the problem of computing network flows. This is a fundamental problem that has been studied in various formulations by different communities. The min-cost network flow variant (where the objective function is typically chosen to be linear, although there are non-linear extensions, and the constraints also include inequalities) has been essential in the development of the theory of polynomial-times algorithms for optimizations (see \cite{GSW12} and references therein, or \cite{Ahuja:MinCostFlow} for a book reference). In the case of quadratic functions, the problem is equivalent to computing electrical flows, which is a fundamental primitive related to Laplacian solvers that has been extensively studied in computer science (see \cite{citeulike:12634920}) with also applications to learning problems (\cite{Belkin2004, rasmussushant15}, for instance). In the formulation we consider, a directed graph $\vec G=(V,\vec E)$ is given (arrows indicate directed graphs/edges), with its structure encoded in the vertex-edge incidence matrix $A\in\mathbb{R}^{V\times \vec E}$. To each edge $e\in \vec E$ is associated a flow $x_e$ with a cost $f_e(x_e)$, and to each vertex $v\in V$ is associated an external flow $b_v$. The network flow problem consists in finding the flow $x^\star(b)\in\mathbb{R}^{\vec E}$ that minimizes the cost $f(x):=\sum_{e\in \vec E} f_e(x)$ and satisfies the conservation law $Ax=b$.\\
\noindent In this setting, the general sensitivity theory that we have previously developed allows to characterize the derivatives of the optimal flow in terms of graph Laplacians; in fact, in this case the matrix $A\Sigma(b)A^T$ corresponds to the Laplacian of an undirected weighted graph associated to $\vec G$, where each directed edge $e\in\vec{E}$ is given a weight $\Sigma(b)_{ee}>0$. Exploiting a general connection between the Moore-Penrose pseudoinverse of graph Laplacians and the Green's function of random walks on weighed graphs --- which we present as standalone in Appendix \ref{sec:Laplacians and random walks} --- we express the correlation term $\frac{d x^\star(b(\varepsilon))_e}{d\varepsilon}$ in terms of \emph{differences} of Green's functions.
Different graph topologies yield different decaying behaviors for this quantity, as a function of the geodesic distance $d(e,Z)$ between edge $e$ and the set of vertices $Z\subseteq V$ where the perturbation is localized, namely, $Z := \{z\in V : \frac{d b(\varepsilon)_z}{d \varepsilon} = 0\}$.
In the case of expanders, we derive spectral bounds that show an exponential decay, with rate given by the second largest eigenvalue in magnitude of the diffusion random walk. 
In this case we establish various types of decay of correlation bounds, point-to-set and set-to-point. Appendix \ref{app:Decay of correlation} contains the proofs of these results. For grid-like graphs, based on numerical simulations and on asymptotic results for the behavior of the Green's function in infinite grids \cite{lawler2010random}, we expect the correlation term to decay polynomially instead of exponentially.\\
\indent\textbf{4) Localized algorithms and bias-variance.} In Section \ref{sec:Scale-free algorithm} we investigate applications of the framework that we propose to develop scalable computationally-efficient algorithms. To illustrate the main principle behind our reasoning, we consider the case when the solution $x^\star(b)$ is given (also known as \emph{warm-start} scenario) and we want to compute the solution $x^\star(b+p)$ for the perturbed flow $b+p$, where $p$ is a perturbation supported on a small localized subset $Z\subseteq V$.
This setting belongs to the class of reoptimization problems typically studied in computer science and operations research (we could not to find previous literature on the setting that we consider; the case considered more frequently involves discrete problems and changes in the graph structure, as in \cite{reopt}).\\
\noindent The decay of correlation property structurally exhibited by the min-cost network flow problem encodes the fact that when the external flow $b$ is locally perturbed it suffices to recompute the solution only for the part of the network that is ``mostly affected" by this perturbation, i.e., the set of edges that have a distance at most $r$ from the perturbation set $Z$. Here, the radius $r$ is tuned to meet the desired level of error tolerance given the size of the perturbation and given the graph topology being investigated. This allows us to show that it is possible to develop localized versions of canonical optimization algorithms that can exploit locality by only updating the edges in a subgraph of $\vec G$.
We investigate regimes where localized algorithms can achieve computational savings against their global counterpart.
\noindent The key behind these savings is the bias-variance decomposition that we give for the error of localized algorithms. This decomposition conveys the idea that by introducing some bias in a given optimization routine (in our setting, by truncating the problem size restricting the algorithm to a subgraph) one can diminish its variance and obtain faster convergence rates. We illustrate this phenomenon theoretically and with numerical simulations for localized projected gradient descent. Proofs are in Appendix \ref{app:error localized}.
\begin{notation*}
\label{rem:notation}
For a given matrix $M$, let $M^T$ be the transpose, $M^{-1}$ be the inverse, and $M^{+}$ be the Moore-Penrose pseudoinverse. Let $\operatorname{Ker}(M):=\{x:Mx=0\}$ and $\operatorname{Im}(M):=\{y: y=Mx \text{ for some $x$}\}$ be the kernel and image of $M$, respectively. Given an index set $\mathcal{I}$ and subsets $K,L\subseteq\mathcal{I}$, if $M\in\mathbb{R}^{\mathcal{I}\times \mathcal{I}}$, let $M_{K,L}\in\mathbb{R}^{K\times L}$ denote the submatrix corresponding to the rows and columns of $M$ indexed by $K$ and $L$, respectively. Let $I$ be the identity matrix, $\mathbb{1}$ the all-one vector (or matrix), and $\mathbb{0}$ the all-zero vector (or matrix), whose sizes are implied by the context. Given a vector $x\in\mathbb{R}^\mathcal{I}$, let $x_i\in\mathbb{R}$ be the component associated to $i\in\mathcal{I}$, and $x_K:=(x_i)_{i\in K}\in\mathbb{R}^K$ the components associated to $K\subseteq \mathcal{I}$. Let $\| x\| := (\sum_{i\in \mathcal{I}} x^2_i)^{1/2}$ be the $\ell_2$-norm of $x$ and $\| x\|_K := (\sum_{i\in K} x^2_i)^{1/2}$ be the localized $\ell_2$-norm on $K$. Clearly, $\| x\|_K = \| x_K \|$ and $\| x\|_\mathcal{I} = \| x\|$. We use the notation $|K|$ to denote the cardinality of $K$.
If $\vec G = (V, \vec E)$ is a directed graph with vertex set $V$ and edge set $\vec E$, let $G = (V, E)$ be the undirected graph associated to $\vec G$, namely, $\{u,v\}\in E$ if and only if either $(u,v)\in \vec E$ or $(v,u)\in \vec E$.
\end{notation*}


\section{Sensitivity of optimal points}
\label{sec:Local sensitivity for equality-constrained convex minimization}

Let $\mathcal{V}$ be a finite set --- to be referred to as the ``variable set" --- and let $f:\mathbb{R}^\mathcal{V}\rightarrow \mathbb{R}$ be a strictly convex function, twice continuously differentiable. Let $\mathcal{F}$ be a finite set --- to be referred to as the ``factor set" --- and let $A\in\mathbb{R}^{\mathcal{F}\times \mathcal{V}}$.
Consider the following optimization problem over $x\in\mathbb{R}^\mathcal{V}$:
\begin{align}
	\begin{aligned}
		\text{minimize }\quad   & f(x)\\
		\text{subject to }\quad & Ax = b,
	\end{aligned}
	\label{original opt problem}
\end{align}
for $b\in\operatorname{Im}(A)\subseteq \mathbb{R}^\mathcal{F}$, so that the feasible region is not empty.
Throughout this paper we think of the function $f$ and the matrix $A$ as fixed, and we consider the solution of the optimization problem above as a function of the vector $b\in\operatorname{Im}(A)$.
By strict convexity, this problem has a unique optimal solution, that we denote by
$
	x^\star(b) := {\operatorname{argmin}}\{ f(x) : x\in\mathbb{R}^\mathcal{V},A x = b \}.
$

Theorem \ref{thm:comparisontheorem} below provides a characterization of the way a perturbation of the constraint vector $b$ on the subspace $\operatorname{Im}(A)$ affects the optimal solution $x^\star(b)$ in the case when the function $f$ is strongly convex.
The proof is given in Appendix \ref{sec:Hadamard global inverse theorem}.

\begin{theorem}[Sensitivity optimal point]\label{thm:comparisontheorem}
Let $f:\mathbb{R}^\mathcal{V}\rightarrow \mathbb{R}$ be a strongly convex function, twice continuously differentiable.
Let $A\in\mathbb{R}^{\mathcal{F}\times \mathcal{V}}$.
For $b\in \operatorname{Im}(A)$, let $\Sigma(b):=\nabla^2 f(x^\star (b))^{-1}$ and
$
	D(b)
	:=
	\Sigma(b)A^T (A\Sigma(b)A^T)^{+}.
$
Then, $x^\star$ is continuously differentiable along the subspace $\operatorname{Im}(A)$, and given a differentiable function $\varepsilon\in\mathbb{R} \rightarrow b(\varepsilon)\in \operatorname{Im}(A)$ we have
$$
	\frac{d x^\star(b(\varepsilon))}{d \varepsilon} 
	= D(b(\varepsilon)) \frac{d b(\varepsilon)}{d \varepsilon}.
$$
\end{theorem}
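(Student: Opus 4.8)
The plan is to realize the optimal point, together with a canonically chosen Lagrange multiplier, as the value of a globally invertible ``KKT map,'' then invoke Hadamard's global inverse function theorem to conclude that this map has a continuously differentiable inverse, and finally obtain the displayed formula by implicitly differentiating the KKT system along the curve $\varepsilon\mapsto b(\varepsilon)$. The first step is to fix the ambient space for the multiplier so that the resulting map is square. Since $\operatorname{Im}(A)=\operatorname{Ker}(A^T)^{\perp}$, the map $A^T$ is injective on $\operatorname{Im}(A)$, so among the Lagrange multipliers of \eqref{original opt problem} exactly one, call it $\nu^\star(b)$, lies in $\operatorname{Im}(A)$. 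I would define $\Phi:\mathbb{R}^{\mathcal V}\times\operatorname{Im}(A)\to\mathbb{R}^{\mathcal V}\times\operatorname{Im}(A)$ by $\Phi(x,\nu)=(\nabla f(x)+A^T\nu,\,Ax)$; both spaces have dimension $|\mathcal V|+\dim\operatorname{Im}(A)$, and $\Phi$ is of class $C^1$ since $f$ is twice continuously differentiable. Strong convexity guarantees that \eqref{original opt problem} has a unique minimizer (coercivity), and linearity of the constraint guarantees a multiplier, so $(x,\nu)=\Phi^{-1}(0,b)$ is exactly the KKT system and has unique solution $(x^\star(b),\nu^\star(b))$.

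Next I would verify the two hypotheses of Hadamard's theorem. For invertibility of the differential: writing $\Sigma:=\nabla^2 f(x)^{-1}\succ 0$, a tangent vector $(\delta x,\delta\nu)\in\operatorname{Ker}(D\Phi(x,\nu))$ satisfies $\nabla^2 f(x)\,\delta x+A^T\delta\nu=0$ and $A\,\delta x=0$, hence $\delta x=-\Sigma A^T\delta\nu$ and $A\Sigma A^T\delta\nu=0$; positivity of $\Sigma$ forces $A^T\delta\nu=0$, and since $\delta\nu\in\operatorname{Im}(A)$ this gives $\delta\nu=0$ and then $\delta x=0$. For properness I would split $x=x^{\parallel}+x^{\perp}$ with $x^{\parallel}\in\operatorname{Im}(A^T)$, $x^{\perp}\in\operatorname{Ker}(A)$: boundedness of $Ax$ bounds $x^{\parallel}$, and with $x^{\parallel}$ ranging over a bounded set strong convexity (constant $m$) yields $\langle\nabla f(x)+A^T\nu,\,x^{\perp}\rangle=\langle\nabla f(x)-\nabla f(x^{\parallel}),\,x^{\perp}\rangle\ge m\|x^{\perp}\|^2-C\|x^{\perp}\|$, so $\|\nabla f(x)+A^T\nu\|\to\infty$ whenever $\|x^{\perp}\|\to\infty$; once $x$ is bounded, so is $\nabla f(x)$, hence so is $A^T\nu$, hence so is $\nu$ by injectivity of $A^T$ on $\operatorname{Im}(A)$. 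Thus $\Phi$ is a $C^1$ diffeomorphism of $\mathbb{R}^{\mathcal V}\times\operatorname{Im}(A)$ onto itself, and composing $\Phi^{-1}$ with $b\mapsto(0,b)$ shows $b\mapsto(x^\star(b),\nu^\star(b))$ is $C^1$ along $\operatorname{Im}(A)$; in particular $x^\star$ is continuously differentiable along $\operatorname{Im}(A)$.

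Finally, differentiating the identities $\nabla f(x^\star(b(\varepsilon)))+A^T\nu^\star(b(\varepsilon))=0$ and $Ax^\star(b(\varepsilon))=b(\varepsilon)$ in $\varepsilon$ gives $\nabla^2 f\,\dot x^\star+A^T\dot\nu^\star=0$ and $A\dot x^\star=\dot b$, whence $\dot x^\star=-\Sigma A^T\dot\nu^\star$ and $A\Sigma A^T\dot\nu^\star=-\dot b$, with $\Sigma=\Sigma(b(\varepsilon))$. Since $\nu^\star(b(\varepsilon))\in\operatorname{Im}(A)$ for every $\varepsilon$, also $\dot\nu^\star\in\operatorname{Im}(A)$; and because $A\Sigma A^T=(A\Sigma^{1/2})(A\Sigma^{1/2})^T$ is symmetric positive semidefinite with image exactly $\operatorname{Im}(A)$ (as $\Sigma^{1/2}$ is invertible), the equation $A\Sigma A^T w=-\dot b$ has a unique solution $w\in\operatorname{Im}(A)$, namely $w=-(A\Sigma A^T)^{+}\dot b$, using that $(A\Sigma A^T)(A\Sigma A^T)^{+}$ is the orthogonal projection onto $\operatorname{Im}(A)$ and that $\dot b\in\operatorname{Im}(A)$. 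Therefore $\dot\nu^\star=-(A\Sigma A^T)^{+}\dot b$ and $\dot x^\star=\Sigma A^T(A\Sigma A^T)^{+}\dot b=D(b(\varepsilon))\,\dot b(\varepsilon)$, as claimed.

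I expect the main obstacle to be the careful handling of a possibly rank-deficient $A$: choosing $\operatorname{Im}(A)$ as the home of the multiplier so that the KKT map is square with everywhere-invertible differential, establishing properness in that reduced space (the coercivity estimate above is the crux), and recognizing the unique $\operatorname{Im}(A)$-solution of the final linear system as the action of the Moore--Penrose pseudoinverse. Once those points are settled, the differentiability statement is immediate from Hadamard's theorem and the formula is a short linear-algebra computation.
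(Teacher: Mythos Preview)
Your proposal is correct and follows essentially the same approach as the paper: define the KKT map $\Phi(x,\nu)=(\nabla f(x)+A^T\nu,\,Ax)$ on $\mathbb{R}^{\mathcal V}\times\operatorname{Im}(A)$, verify the two hypotheses of Hadamard's global inverse function theorem (nonsingular differential and properness via strong convexity), and then differentiate the KKT identities to extract the formula via the pseudoinverse. The only presentational difference is that the paper first passes to explicit coordinates on $\operatorname{Im}(A)$ (reducing $A$ to a full-row-rank matrix $B$) before checking Hadamard's hypotheses, whereas you work coordinate-free; the case analysis for properness and the final linear-algebra step identifying the unique $\operatorname{Im}(A)$-solution with $(A\Sigma A^T)^{+}\dot b$ are otherwise the same.
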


Most of the literature on the sensitivity of optimal points for nonlinear programs investigates \emph{local} results for infinitesimal perturbations, establishing the existence of a local neighborhood of the perturbed parameter(s) where the optimal point(s) has(have) some analytical properties such as continuity, differentiability, etc. Classical references are \cite{guddat76,Robinson1979,fiacco83,Kyparisis1986}. Typically, the main tool used to establish this type of results is the implicit function theorem applied to the first order optimality conditions (KKT map), which is a local statement (as such, it is flexible and it can be applied to a great variety of optimization problems). The sensitivity result that we present in Theorem \ref{thm:comparisontheorem} in the case of strongly convex functions, on the other hand, is based on Hadamard's global inverse function theorem \cite{krantz2002implicit}. This is a \emph{global} statement, as it shows that the optimal point $x^\star$ is continuously differentiable along the \emph{entire} subspace $\operatorname{Im}(A)$. This fact allows the use of the fundamental theorem of calculus to deal with finite perturbations, which is instrumental for the results developed in this paper (in particular, for the connection with comparison theorems in probability, Section \ref{sec:Comparison theorems}, and for the results in Section \ref{sec:Scale-free algorithm}). In the case of quadratic programs with linear constraints (the problem we consider can be thought of as an extension of this setting to strongly convex functions) there are many papers in parametric programming investigating the sensitivity of optimal points with respect to changes in the objective functions and constraints (see \cite{Phu2001} and references therein). While most of these results are again local, some of them are closer to our approach and also address finite perturbations \cite{boot63}.

Theorem \ref{thm:comparisontheorem} characterizes the behavior of  $x^\star(b)$ upon perturbations of $b$ along $\operatorname{Im}(A)\subseteq \mathbb{R}^\mathcal{F}$. If the matrix $A$ is full row rank, i.e., $\operatorname{Im}(A) = \mathbb{R}^\mathcal{F}$, then the optimal point $x^\star$ is everywhere continuously differentiable, and we can compute its gradient. We have the following immediate corollary.

\begin{corollary}[Sensitivity optimal point, full rank case]
\label{cor:fullrank}
Consider the setting of Theorem \ref{thm:comparisontheorem}, with the matrix $A\in\mathbb{R}^{\mathcal{F}\times \mathcal{V}}$ having full row rank, i.e., $\operatorname{Im}(A) = \mathbb{R}^\mathcal{F}$. 
Then, the function $b\in \mathbb{R}^\mathcal{F}\rightarrow x^\star(b)\in\mathbb{R}^{\mathcal{V}}$ is continuously differentiable and 
$$
	\frac{d x^\star(b)}{d b}
	= D(b)
	=
	\Sigma(b)A^T (A\Sigma(b)A^T)^{-1}.
$$
\end{corollary}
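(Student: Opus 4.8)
The plan is to derive the corollary as a direct specialization of Theorem \ref{thm:comparisontheorem} under the full-row-rank hypothesis. First I would note that $\operatorname{Im}(A) = \mathbb{R}^{\mathcal{F}}$, so the conclusion of Theorem \ref{thm:comparisontheorem} that $x^\star$ is continuously differentiable along $\operatorname{Im}(A)$ becomes the statement that $x^\star : \mathbb{R}^{\mathcal{F}} \to \mathbb{R}^{\mathcal{V}}$ is continuously differentiable on the whole of its domain, so that the Jacobian $\frac{d x^\star(b)}{d b}$ is well-defined at every $b$.

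Next I would reduce the Moore--Penrose pseudoinverse to an ordinary inverse. Since $f$ is strongly convex, $\nabla^2 f(x^\star(b))$ is symmetric positive definite, hence so is $\Sigma(b) = \nabla^2 f(x^\star(b))^{-1}$; let $\Sigma(b)^{1/2}$ be its positive definite square root. Then $A\Sigma(b)A^T = (A\Sigma(b)^{1/2})(A\Sigma(b)^{1/2})^T$, and because $\Sigma(b)^{1/2}$ is invertible the matrix $A\Sigma(b)^{1/2}$ has full row rank whenever $A$ does; therefore $A\Sigma(b)A^T \in \mathbb{R}^{\mathcal{F}\times\mathcal{F}}$ is invertible and $(A\Sigma(b)A^T)^{+} = (A\Sigma(b)A^T)^{-1}$. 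Consequently $D(b) = \Sigma(b)A^T(A\Sigma(b)A^T)^{-1}$, which is the second equality claimed.

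Finally I would assemble the Jacobian from directional derivatives. Fix $b\in\mathbb{R}^{\mathcal{F}}$ and, for each $a\in\mathcal{F}$, apply Theorem \ref{thm:comparisontheorem} to the affine curve $\varepsilon \mapsto b + \varepsilon e_a$ (which stays in $\operatorname{Im}(A) = \mathbb{R}^{\mathcal{F}}$), with $e_a$ the $a$-th standard basis vector; since $\frac{d b(\varepsilon)}{d\varepsilon} = e_a$, evaluating at $\varepsilon = 0$ gives $\frac{\partial x^\star(b)}{\partial b_a} = D(b)e_a$, i.e.\ the $a$-th column of $D(b)$. Letting $a$ range over $\mathcal{F}$ and invoking the continuous differentiability of $x^\star$ to guarantee that these partial derivatives genuinely constitute the full derivative yields $\frac{d x^\star(b)}{d b} = D(b)$, which together with the previous paragraph is the asserted formula.

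I expect no real obstacle here, since the statement is a routine specialization: the only two points that need a line of justification are the invertibility of $A\Sigma(b)A^T$ under the full-row-rank hypothesis (which turns the pseudoinverse into a genuine inverse) and the passage from the coordinate-direction derivatives supplied by Theorem \ref{thm:comparisontheorem} to the full Jacobian, which is licensed by the $C^1$ conclusion of that theorem.
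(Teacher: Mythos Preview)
Your proposal is correct and matches the paper's approach: the paper states this result as an ``immediate corollary'' of Theorem~\ref{thm:comparisontheorem} without giving a separate proof, and the details you supply (that full row rank makes $A\Sigma(b)A^T$ invertible so the pseudoinverse becomes an ordinary inverse, and that applying the theorem along coordinate directions recovers the Jacobian) are exactly the routine specialization the paper has in mind.
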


\begin{remark}
\label{rem:conditions}Our goal is to present the simplest setting of interest where we can establish locality and illustrate the computational savings achieved by localized algorithms. The computational advantages provided by localization are already appreciable in the well-conditioned setting that we consider, as we explain in Section \ref{sec:Scale-free algorithm} below. For this reason, we are satisfied with the assumption of strong convexity in Theorem \ref{thm:comparisontheorem}. The main tool behind Theorem \ref{thm:comparisontheorem} is Hadamard's global inverse function theorem, which provides \emph{necessary and sufficient} conditions for the inverse of a continuously differentiable function to be continuously differentiable. These conditions involve cohercitivity of the function of interest, along with a non-degeneracy condition on the determinant of the function. To relax the assumptions that we give in Theorem \ref{thm:comparisontheorem}, one needs to consider a more refined application of Hadamard's theorem to the KKT map.
\end{remark}


\section{Notions of correlation in optimization}
\label{sec:Notions of correlation in optimization}

The sensitivity analysis presented in Section \ref{sec:Local sensitivity for equality-constrained convex minimization} suggests a natural notion of correlation between variables and factors in optimization, resembling notions of correlation among random variables in probability theory.
If the matrix $A$ is full row rank, then the quantity $\frac{\partial x^\star(b)_i}{\partial b_a}$ is well-defined and it captures the interaction between variable $i\in\mathcal{V}$ and factor $a\in\mathcal{F}$ in the optimization procedure, and the quantity $D(b)_{ia}$ in Corollary \ref{cor:fullrank} characterizes this correlation as a function of the constraint matrix $A$, the objective function $f$, and the optimal solution $x^\star (b)$. Theorem \ref{thm:comparisontheorem} allows us to extend the notion of correlation between variables and factors to the more general case when the matrix $A$ is not full rank. As an example, let $b,p\in \operatorname{Im}(A)$, and assume that $p$ is supported on a subset $Z\subseteq \mathcal{F}$, namely, $p_a\neq 0$ if and only if $a\in Z$. Define $b(\varepsilon) := b+\varepsilon p$. Then, the quantity $\frac{d x^\star(b(\varepsilon))_i}{d\varepsilon}$ measures how much a perturbation of the constraints in $Z$ affects the optimal solution at $i\in \mathcal{V}$, hence it can be interpreted as a measure of the correlation between variable $i$ and the factors in $Z$, which is characterized by the term $(D(b(\varepsilon)) \frac{d b(\varepsilon)}{d \varepsilon})_{i}=\sum_{a\in Z} D(b(\varepsilon))_{ia} p_a$ in Theorem \ref{thm:comparisontheorem}.

We now discuss how these notions of correlation relate to analogous notions in probability.

\subsection{Connection with Gaussian random variables}

The resemblance between the notion of correlations in optimization and in probability is made explicit via the analogy to the theory of Gaussian random variables. Recall the following result (proofs of the results here discussed are given for completeness in Appendix \ref{app:Correlation}).

\begin{proposition}[Conditional mean of Gaussian random variables]
\label{prop:condsgaussian}
Let $\mathcal{V},\mathcal{F}$ be two finite sets. Let $X\in\mathbb{R}^\mathcal{V}$ be a Gaussian random vector with mean $\mu\in\mathbb{R}^\mathcal{V}$ and covariance $\Sigma\in\mathbb{R}^{\mathcal{V}\times\mathcal{V}}$, possibly singular. Let $A\in\mathbb{R}^{\mathcal{F}\times\mathcal{V}}$ be given.
Given a differentiable function $\varepsilon\in\mathbb{R} \rightarrow b(\varepsilon)\in \operatorname{Im}(A)$, we have
$
	\frac{d \mathbf{E}[X | AX=b(\varepsilon)]}{d\varepsilon} 
	= \Sigma A^T (A\Sigma A^T)^{+}\frac{d b(\varepsilon)}{d\varepsilon}.
$
If $\Sigma$ is invertible and $A$ is full row rank, then for each $b\in\mathbb{R}^\mathcal{F}$ we have
$
	\frac{d \mathbf{E}[X | AX=b]}{d b} 
	= \Sigma A^T (A\Sigma A^T)^{-1}.
	\label{Gaussian:correlation}
$
\end{proposition}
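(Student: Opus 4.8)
The plan is to reduce the statement to the classical fact that for a jointly Gaussian pair the conditional expectation is an \emph{affine} function of the conditioning variable, and then simply differentiate that affine function. Put $Y := AX$, so that $(X,Y)$ is jointly Gaussian with $\mathbf{E}[Y] = A\mu$, $\operatorname{Cov}(X,Y) = \Sigma A^T$, and $\operatorname{Cov}(Y) = A\Sigma A^T$. The first step is a linear-algebra lemma: the affine map
$$
g(y) := \mu + \Sigma A^T (A\Sigma A^T)^{+}(y - A\mu), \qquad y\in\mathbb{R}^\mathcal{F},
$$
is a version of the regular conditional mean $y\mapsto \mathbf{E}[X\mid AX = y]$. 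Granting this, the first claim is immediate: substituting $y=b(\varepsilon)$ and applying the chain rule --- the coefficient matrix $\Sigma A^T(A\Sigma A^T)^{+}$ being independent of $\varepsilon$ --- gives $\frac{d}{d\varepsilon}\mathbf{E}[X\mid AX=b(\varepsilon)] = \Sigma A^T(A\Sigma A^T)^{+}\frac{db(\varepsilon)}{d\varepsilon}$.

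To establish the lemma I would look at the residual $R := X - g(Y)$. Since $\mathbf{E}[R]=0$ and $(R,Y)$ is jointly Gaussian, it suffices to check that $\operatorname{Cov}(R,Y)=0$: then $R$ is independent of $Y$, whence $\mathbf{E}[X\mid Y] = g(Y) + \mathbf{E}[R\mid Y] = g(Y)$ almost surely. A direct computation gives $\operatorname{Cov}(R,Y) = \Sigma A^T\bigl(I - (A\Sigma A^T)^{+}(A\Sigma A^T)\bigr)$, and, since $A\Sigma A^T$ is symmetric, $I - (A\Sigma A^T)^{+}(A\Sigma A^T)$ is the orthogonal projector onto $\operatorname{Ker}(A\Sigma A^T)$. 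The crux is then the inclusion $\operatorname{Ker}(A\Sigma A^T)\subseteq\operatorname{Ker}(\Sigma A^T)$: if $v\in\operatorname{Ker}(A\Sigma A^T)$ then $\operatorname{Var}(v^TY) = v^T A\Sigma A^T v = 0$, so $v^TY$ is a.s.\ constant and hence $\Sigma A^T v = \operatorname{Cov}(X, v^TY) = 0$. Therefore $\operatorname{Cov}(R,Y)=0$, as needed.

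For the second assertion, when $\Sigma$ is invertible and $A$ has full row rank, $A\Sigma A^T$ is invertible: for $v\neq 0$ one has $v^T A\Sigma A^T v = (A^Tv)^T\Sigma(A^Tv) > 0$ because $A^T$ is injective. Hence $(A\Sigma A^T)^{+} = (A\Sigma A^T)^{-1}$, and $\operatorname{Im}(A) = \mathbb{R}^\mathcal{F}$, so $g$ is the globally defined (indeed smooth) conditional mean and its Jacobian with respect to $b$ is the constant matrix $\Sigma A^T(A\Sigma A^T)^{-1}$.

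The step I expect to require the most care is the measure-theoretic one underlying the lemma. When $\Sigma$ is singular the support of $AX$ is the proper affine subspace $A\mu + \operatorname{Im}(A\Sigma A^T)$, which may be strictly smaller than $\operatorname{Im}(A)$, so for such $b$ the conditional mean $\mathbf{E}[X\mid AX=b]$ is not canonically pinned down; I would make explicit that the statement refers to the affine version $g$, which is the natural disintegration-based choice and is the one consistent with the optimization result in Theorem \ref{thm:comparisontheorem}. The remaining arguments are routine manipulations with the Moore-Penrose pseudoinverse.
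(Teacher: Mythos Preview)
Your proposal is correct and follows essentially the same approach as the paper: both reduce to the affine formula $\mathbf{E}[X\mid AX=b]=\mu+\Sigma A^T(A\Sigma A^T)^{+}(b-A\mu)$ for the Gaussian conditional mean and then differentiate. The only difference is that the paper obtains this formula by citing a textbook result (applied to the stacked vector $Z=(X,AX)$), whereas you supply a self-contained proof via the residual $R=X-g(Y)$ and the projector identity for $I-(A\Sigma A^T)^{+}(A\Sigma A^T)$; your added remark on the measure-theoretic ambiguity when $\operatorname{Im}(A\Sigma A^T)\subsetneq\operatorname{Im}(A)$ is a nice clarification that the paper leaves implicit.
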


Proposition \ref{prop:condsgaussian} shows that for $i\in\mathcal{V},a\in\mathcal{F}$, the quantity
$
	\frac{\partial \mathbf{E}[X_i | AX=b]}{\partial b_a} 
	= (\Sigma A^T (A\Sigma A^T)^{-1})_{ia}
$
can be interpreted as a measure of correlation between the random variables $X_i$ and $(AX)_a$, as it describes how much a perturbation of $(AX)_a$ impacts $X_i$, upon conditioning on $AX$.
A similar interpretation can be given in optimization for the quantities in Corollary \ref{cor:fullrank}, with the difference that typically these quantities depend on $b\in\operatorname{Im}(A)$, as they are functions of $x^\star(b)$.

The sensitivity results that we derived in Section \ref{sec:Local sensitivity for equality-constrained convex minimization} also yield notions of correlation in optimization between variables. 
The following lemma, an immediate application of Corollary \ref{cor:fullrank}, shows that the local behavior of the optimal solution of the optimization problem \eqref{original opt problem} when we freeze some coordinates, upon perturbation of these coordinates, is analogous to the behavior of the conditional mean of a non-degenerate Gaussian random vector upon changing the coordinates we condition on.
Recall (see the proof of Proposition \ref{prop:condsgaussian}) that if $X\in\mathbb{R}^\mathcal{V}$ is a Gaussian vector with mean $\mu\in\mathbb{R}^\mathcal{V}$ and positive definite covariance $\Sigma\in\mathbb{R}^{\mathcal{V}\times\mathcal{V}}$, then for $I\subseteq\mathcal{V}$ and $B:=\mathcal{V}\setminus I$
$
	\frac{d\mathbf{E}[X_I | X_B=x_B]}{dx_B} = \Sigma_{I,B} (\Sigma_{B,B})^{-1}.
$

\begin{lemma}[Sensitivity with respect to boundary conditions]
\label{lem:Gaussian}
Let $f:\mathbb{R}^\mathcal{V}\rightarrow \mathbb{R}$ be a strongly convex function, twice continuously differentiable. Let $I\subseteq\mathcal{V}$ be a nonempty set, and let $B:=\mathcal{V}\setminus I$ not empty.
Define the function
$
	x^\star_I:x_B\in \mathbb{R}^B\longrightarrow
	x^\star_I(x_B) := {\operatorname{argmin}}\left\{ f(x_Ix_B) : x_I\in\mathbb{R}^I \right\}.
$
For $x_B\in \mathbb{R}^B$, let $H(x_B):=\nabla^2 f(x^\star_I(x_B)x_B)$ and $\Sigma(x_B):=H(x_B)^{-1}$.
Then, $x^\star_I$ is continuously differentiable and $\frac{d x^\star_I(x_B)}{dx_B}=
	\Sigma(x_B)_{I,B} (\Sigma(x_B)_{B,B})^{-1}
	= - (H(x_B)_{I,I})^{-1} H(x_B)_{I,B}.
$
\end{lemma}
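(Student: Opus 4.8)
The plan is to recognize the partial minimization as an instance of the equality‑constrained problem \eqref{original opt problem} with a full‑row‑rank constraint matrix, and then read off the answer from Corollary \ref{cor:fullrank}. Concretely, let $A\in\mathbb{R}^{B\times\mathcal{V}}$ be the coordinate‑selection matrix characterized by $Ax = x_B$ for all $x\in\mathbb{R}^\mathcal{V}$ (its rows are the distinct standard basis vectors $e_b$, $b\in B$, so $A$ has full row rank and $\operatorname{Im}(A)=\mathbb{R}^B$). For fixed $x_B\in\mathbb{R}^B$, minimizing $f(x)$ subject to $Ax=x_B$ is exactly minimizing $f$ over the affine slice in which the $B$‑coordinates are frozen to $x_B$; its unique solution has $B$‑block equal to $x_B$ and $I$‑block equal to $x^\star_I(x_B)$. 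Thus, in the notation of Section \ref{sec:Local sensitivity for equality-constrained convex minimization}, $x^\star(x_B)=x^\star_I(x_B)\,x_B$. (That the partial minimizer is well defined and unique follows because $x_I\mapsto f(x_Ix_B)$ has Hessian $H(x_B)_{I,I}$, a principal submatrix of the positive definite $\nabla^2 f$, hence is itself strongly convex.)

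Next I would apply Corollary \ref{cor:fullrank} with this $A$. It yields that $x_B\mapsto x^\star(x_B)$ is continuously differentiable on $\mathbb{R}^B$ — in particular so is its $I$‑component $x^\star_I$ — with
$$
\frac{d x^\star(x_B)}{d x_B}=\Sigma(x_B)A^T\bigl(A\Sigma(x_B)A^T\bigr)^{-1},
$$
where $\Sigma(x_B)=\nabla^2 f(x^\star(x_B))^{-1}=H(x_B)^{-1}$, which is consistent with the notation of the lemma since $x^\star(x_B)=x^\star_I(x_B)\,x_B$. For any $M\in\mathbb{R}^{\mathcal{V}\times\mathcal{V}}$, multiplying by $A^T$ on the right selects the $B$‑columns and multiplying by $A$ on the left selects the $B$‑rows, so $MA^T=M_{\mathcal{V},B}$ and $AMA^T=M_{B,B}$. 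Hence $\frac{d x^\star(x_B)}{d x_B}=\Sigma(x_B)_{\mathcal{V},B}(\Sigma(x_B)_{B,B})^{-1}$; restricting to the $I$‑rows gives the first claimed identity $\frac{d x^\star_I(x_B)}{d x_B}=\Sigma(x_B)_{I,B}(\Sigma(x_B)_{B,B})^{-1}$ (and restricting to the $B$‑rows consistently gives the identity matrix, as it must since $x^\star(x_B)_B=x_B$).

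Finally, the second expression follows from a Schur‑complement identity. Writing $H=H(x_B)$ and $\Sigma=H^{-1}$ blocked along $I,B$, the $(I,B)$ block of the relation $H\Sigma=I$ reads $H_{I,I}\Sigma_{I,B}+H_{I,B}\Sigma_{B,B}=0$, whence $\Sigma_{I,B}=-(H_{I,I})^{-1}H_{I,B}\Sigma_{B,B}$ and therefore $\Sigma_{I,B}(\Sigma_{B,B})^{-1}=-(H_{I,I})^{-1}H_{I,B}$. This invokes invertibility of $H_{I,I}$ and of $\Sigma_{B,B}$, which both hold because they are principal submatrices of positive definite matrices ($\nabla^2 f$ and its inverse), by strong convexity of $f$.

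There is no substantive obstacle: the lemma is essentially a repackaging of Corollary \ref{cor:fullrank}. The only points demanding care are (i) verifying that freezing the $B$‑coordinates really is \eqref{original opt problem} with a full‑row‑rank $A$, which is what licenses the $C^1$ conclusion, and (ii) the index bookkeeping identifying $MA^T$ and $AMA^T$ with the relevant submatrices of $M$; the Schur‑complement step is standard.
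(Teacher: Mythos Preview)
Your proposal is correct and follows essentially the same route as the paper: recast the partial minimization as problem \eqref{original opt problem} with the full-row-rank selection matrix $A=(\mathbb{0},I)$, apply Corollary \ref{cor:fullrank}, and read off the $I$-block. The only minor difference is in the second identity: you obtain $\Sigma_{I,B}(\Sigma_{B,B})^{-1}=-(H_{I,I})^{-1}H_{I,B}$ from the $(I,B)$ block of $H\Sigma=I$, whereas the paper (after noting that this matrix identity is standard) instead differentiates the first-order optimality condition $\nabla_{x_I} f(x_I^\star(x_B)x_B)=\mathbb{0}$ with respect to $x_B$; both arguments are equivalent and equally valid.
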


\subsection{Comparison theorems}\label{sec:Comparison theorems}
The connection between Theorem \ref{thm:comparisontheorem} and the theory of correlations in probability extends beyond infinitesimal perturbations. 
As previously discussed, Theorem \ref{thm:comparisontheorem} can be used to deal with finite perturbations, and so it can be interpreted as a comparison theorem to capture \emph{uniform} correlations in optimization, along the lines of the comparison theorems in probability theory to capture stochastic decay of correlation and control the difference of high-dimensional distributions (see the seminal work in \cite{Dob70}, and \cite{RvH14} for generalizations).

To see this analogy, let us consider a simplified version of the Dobrushin comparison theorem that can be easily derived from the textbook version in \cite{Geo11}, Theorem 
8.20. Let $I$ be a finite set, and let $\Omega:=\prod_{i\in I}\Omega_i$ 
where $\Omega_i$ is a finite set for each $i\in I$.  Define the 
projections $X_i:x\mapsto x_i$ for $x\in\Omega$ an $i\in I$.
For any probability distribution $\mu$ on $\Omega$, define the marginal
$
	\mu_i(y) := \mu(X_i= y),
$
and the conditional distribution
$
	\mu^x_i(y) := \mu(X_i= y|X_{I\backslash\{i\}}=x_{I\backslash\{i\}}).
$
Define the total variation distance between two distributions $\nu$ and $\tilde \nu$ on $\Omega_i$ as $\| \nu - \tilde \nu \|_{T} := \frac{1}{2} \sum_{y\in \Omega_i} |\nu(y) - \tilde \nu(y)|$.

\begin{theorem}[Dobrushin comparison theorem]
\label{thm:dobrushin}
Let $\mu,\tilde\mu$ be probability distributions on 
$\Omega$. For each $i,j\in I$, define
$
	\mathsf{C}_{ij} :=
		\!\!\sup_{x,z\in\Omega:x^{I\backslash\{j\}}=
		z^{I\backslash\{j\}}} 
		\! \|\mu^x_i-\mu^z_i\|_{T}
$
and
$
		\mathsf{b}_j := \sup_{x\in\Omega}\|\mu^x_j-\tilde\mu^x_j\|_{T},
$
and assume that the Dobrushin condition holds:
$
	\max_{i\in I}\sum_{j\in I}\mathsf{C}_{ij}<1.
$
Then the matrix sum
$\mathsf{D} := \sum_{t\ge 0}\mathsf{C}^t$ is convergent, and for any $i\in I$, $y\in\Omega_i$, we have
$
	\|\mu_i-\tilde\mu_i \|_{T} \le
	\sum_{j\in I} \mathsf{D}_{ij} 
	\mathsf{b}_j.
$
\end{theorem}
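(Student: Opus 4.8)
The plan is to prove this via the standard functional (coupling‑free) argument for Dobrushin's comparison theorem; in the finite setting at hand everything reduces to linear algebra. For $f\colon\Omega\to\mathbb{R}$ and $j\in I$ let $\delta_j(f):=\sup\{|f(x)-f(z)| : x^{I\backslash\{j\}}=z^{I\backslash\{j\}}\}$ denote the oscillation of $f$ in coordinate $j$, and write $\boldsymbol\delta(f):=(\delta_j(f))_{j\in I}$ for the vector of these oscillations. For $i\in I$ let $\Pi_i$ be the single‑site $\mu$‑conditional‑expectation operator, $(\Pi_i f)(x):=\sum_{y\in\Omega_i}\mu_i^x(y)\,f(x_y^i)$, where $x_y^i$ is $x$ with its $i$‑th coordinate replaced by $y$, and let $\tilde\Pi_i$ be the analogue for $\tilde\mu$. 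The tower property gives $\mu\Pi_i=\mu$ and $\tilde\mu\tilde\Pi_i=\tilde\mu$, and $\mathsf{C}_{ii}=0$ since $\mu_i^x$ does not depend on $x_i$. Since $\mathsf{C}$ has nonnegative entries and, by the Dobrushin condition, maximal row sum equal to some $\alpha<1$, the Neumann series $\sum_{t\ge0}\mathsf{C}^t$ and $\sum_{t\ge0}(\mathsf{C}^T)^t$ converge geometrically to $\mathsf{D}=(I-\mathsf{C})^{-1}$ and $\mathsf{D}^T$, both entrywise nonnegative; this establishes the convergence of $\mathsf{D}$.

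The heart of the argument is two one‑step estimates. Using $\mu(f)=\mu(\Pi_i f)$, $\tilde\mu(f)=\tilde\mu(\tilde\Pi_i f)$, the decomposition $\mu(f)-\tilde\mu(f)=\big(\mu(\Pi_i f)-\tilde\mu(\Pi_i f)\big)+\tilde\mu\big((\Pi_i-\tilde\Pi_i)f\big)$, and the fact that $\Pi_i-\tilde\Pi_i$ annihilates constants (so $f$ may be recentered coordinatewise before bounding $(\Pi_i-\tilde\Pi_i)f$ by a total‑variation factor), one obtains
\[
|\mu(f)-\tilde\mu(f)|\le|\mu(\Pi_i f)-\tilde\mu(\Pi_i f)|+\mathsf{b}_i\,\delta_i(f).
\]
A parallel recentering argument controls how $\Pi_i$ redistributes oscillations: $\delta_i(\Pi_i f)=0$ and $\delta_j(\Pi_i f)\le\delta_j(f)+\mathsf{C}_{ij}\,\delta_i(f)$ for $j\ne i$, i.e.\ componentwise $\boldsymbol\delta(\Pi_i f)\le\boldsymbol\delta(f)+\delta_i(f)\,(\mathsf{C}^T-I)\,\mathbf{e}_i$, where $\mathbf{e}_i$ is the $i$‑th standard basis vector of $\mathbb{R}^I$.

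Then I would iterate. Fix a sequence $(i_t)_{t\ge1}$ in which every element of $I$ appears infinitely often, and set $f_0:=f$, $f_t:=\Pi_{i_t}f_{t-1}$, $g_t:=\delta_{i_t}(f_{t-1})\ge0$. Iterating the first estimate gives $|\mu(f)-\tilde\mu(f)|\le\sum_{t=1}^{T}g_t\,\mathsf{b}_{i_t}+|\mu(f_T)-\tilde\mu(f_T)|$. The remainder vanishes: summing the oscillation estimate shows the total oscillation $s_t:=\sum_j\delta_j(f_t)$ satisfies $s_t\le s_{t-1}-(1-\alpha)g_t$, so $s_t$ is non‑increasing and $\sum_t g_t\le s_0/(1-\alpha)<\infty$; since every coordinate recurs infinitely often and the oscillation increments are summable, $\delta_j(f_t)\to0$ for each $j$, hence $s_t\to0$ and $|\mu(f_T)-\tilde\mu(f_T)|\le\max_x f_T(x)-\min_x f_T(x)\le s_T\to0$. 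For the main term, summing the componentwise recursion for $\boldsymbol\delta(f_t)$ over $t$ yields $\mathbb{0}\le\boldsymbol\delta(f_T)\le\boldsymbol\delta(f_0)-(I-\mathsf{C}^T)\mathbf{h}_T$ with $\mathbf{h}_T:=\sum_{t=1}^{T}g_t\,\mathbf{e}_{i_t}$; multiplying $(I-\mathsf{C}^T)\mathbf{h}_T\le\boldsymbol\delta(f_0)$ by the nonnegative matrix $\mathsf{D}^T$ gives $\mathbf{h}_T\le\mathsf{D}^T\boldsymbol\delta(f_0)$, i.e.\ $\sum_{t\le T:\,i_t=j}g_t\le\sum_i\mathsf{D}_{ij}\,\delta_i(f)$. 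Letting $T\to\infty$, $\sum_t g_t\,\mathsf{b}_{i_t}=\sum_j\mathsf{b}_j\sum_{t:\,i_t=j}g_t\le\sum_i\delta_i(f)\sum_j\mathsf{D}_{ij}\mathsf{b}_j$, so $|\mu(f)-\tilde\mu(f)|\le\sum_i\delta_i(f)\sum_j\mathsf{D}_{ij}\mathsf{b}_j$. Finally, specializing to $f$ that depends only on coordinate $i$ with $\|f\|_\infty\le1$ makes $\delta_{i'}(f)=0$ for $i'\ne i$ and $\delta_i(f)\le2$; since $\|\mu_i-\tilde\mu_i\|_{T}=\frac{1}{2}\sup\{\mu(f)-\tilde\mu(f)\}$ over such $f$, we conclude $\|\mu_i-\tilde\mu_i\|_{T}\le\sum_j\mathsf{D}_{ij}\mathsf{b}_j$.

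I expect the main obstacle to be the vanishing of the remainder $|\mu(f_T)-\tilde\mu(f_T)|$, which is in essence (the proof of) Dobrushin's uniqueness theorem: one must argue that repeatedly applying the single‑site conditional‑expectation operators drives any function to a constant when $\alpha<1$. The remaining steps are bookkeeping, the only delicate point there being to keep transposes and the index order in $\mathsf{D}_{ij}$ straight when converting the coordinatewise oscillation recursion into a Neumann‑series bound. A lighter alternative, in line with the phrase ``easily derived from'', is simply to invoke \cite{Geo11}, Theorem~8.20, and verify that its hypotheses specialize to the present ones.
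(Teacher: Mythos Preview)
The paper does not prove Theorem~\ref{thm:dobrushin} at all: it is stated as background, with the remark that it ``can be easily derived from the textbook version in \cite{Geo11}, Theorem~8.20.'' Your proposal therefore goes well beyond what the paper does---you supply a complete self-contained argument via the single-site conditional-expectation operators and oscillation bounds, which is indeed a standard route to this result. Your final sentence already anticipates the paper's actual stance: simply invoking Georgii and checking that the hypotheses match is all that is required here. Your detailed argument is correct as far as I can see (the bookkeeping with $\mathsf{D}^T$ versus $\mathsf{D}$ and the vanishing of the remainder are handled properly), but for the purposes of matching the paper, the lighter alternative you mention is exactly what the authors intend.
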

The Dobrushin coefficient $\mathsf{C}_{ij}$ is a (uniform) measure of 
the degree to which a perturbation of site $j$ \emph{directly} affects site $i$ 
under the distribution $\mu$.
However, perturbing site $j$ might also 
indirectly affect site $i$: it could affect another site $k$ which in turn 
affects $i$, etc.  The \emph{aggregate} effect of a perturbation of site $j$ on 
site $i$ is captured by $\mathsf{D}_{ij}$.
The quantity $\mathsf{b}_j$ is a comparison term that
measures the local difference at site $j$ between $\mu$ and $\tilde\mu$ (in 
terms of the conditional distributions $\mu_j^\cdot$ and 
$\tilde\mu_j^\cdot$).

The formal analogy between the Dobrushin comparison theorem and the sensitivity results of Theorem \ref{thm:comparisontheorem} for the optimization problem \eqref{original opt problem} is made explicit by the fundamental theorem of calculus. This connection is easier to make if we assume that the matrix $A$ has full row rank, and we consider the results in Corollary \ref{cor:fullrank}. In this setting, the optimal point $x^\star$ is everywhere continuously differentiable, and for each $i\in \mathcal{V}$, $b,\tilde b\in\mathbb{R}^\mathcal{F}$, $b \neq \tilde b$, we have $x^\star(b)_i
	-
	x^\star(\tilde b)_i=
	\int_{0}^1 \frac{d x^\star(\theta b + (1-\theta)\tilde b)_i}{d\theta} d\theta
	=
	\sum_{a\in \mathcal{F}}
	D(b,\tilde b)_{ia}
	(b_a-\tilde b_a),
$
where $D(b,\tilde b)_{ia}:=\int_{0}^1 (\Sigma(b_\theta)A^T (A\Sigma(b_\theta)A^T)^{-1})_{ia} d\theta$ with $b_\theta := \theta b + (1-\theta)\tilde b$. If for each $i\in\mathcal{V}$ and $a\in\mathcal{F}$ we have $\sup_{b\in\mathbb{R}^\mathcal{F}} |(\Sigma(b)A^T (A\Sigma(b)A^T)^{-1})_{ia}| \le D_{ia}$, then from the previous expression we find
$
	| x^\star(b)_i
	-
	x^\star(\tilde b)_i |
	\le
	\sum_{a\in \mathcal{F}}
	D_{ia}
	|b_a-\tilde b_a|,
$
whose structure resembles the statement in Theorem \ref{thm:dobrushin}. The quantity $D_{ia}$ represents a uniform measure of the aggregate impact that a perturbation of the $a$-th component of the constraint vector $b$ has to the $i$-th component of the optimal solution $x^\star$, so the matrix $D$ takes the analogous role of the matrix $\mathsf{D}$ in Theorem \ref{thm:dobrushin} (and as we will see below in a concrete application, see Theorem \ref{thm:comparisontheoremnetworkflow}, suitable series expansions of $D$ yield the analogous of $\mathsf{C}$).
The quantity $|b_a-\tilde b_a|$ is a comparison term that measures the local difference at factor $a$ between $b$ and $\tilde b$, resembling the role of $\mathsf{b}_j$ in Theorem \ref{thm:dobrushin}. 

In the next section we investigate the notion of correlation just introduced in the context of network optimization, in a concrete instance when the constraints naturally reflect a graph structure, and we investigate the behavior of the correlations as a function of the natural distance in the graph.


\section{Locality: decay of correlation}\label{sec:Optimal Network Flow}

As a paradigm for network optimization, we consider the network flow problem that has been widely studied in various fields (see introduction).
Consider a directed graph $\vec{G}:=(V,\vec{E})$,
with vertex set $V$ and edge set $\vec{E}$, with no self-edges and no multiple edges. Let $G=(V,E)$ be the undirected graph naturally associated with $\vec{G}$, that is, $\{u,v\}\in E$ if and only if either $(u,v)\in \vec{E}$ or $(v,u)\in \vec{E}$. Without loss of generality, assume that $G$ is connected (otherwise we can treat each connected component on its own). For each $e\in \vec{E}$ let $x_e$ denote the flow on edge $e$, with $x_e>0$ if the flow is in the direction of the edge, $x_e<0$ if the flow is in the direction opposite the edge. For each $v\in V$ let $b_v$ be a given external flow on the vertex $v$: $b_v>0$ represents a source where the flow enters the vertex, whereas $b_v<0$ represents a sink where the flow leaves the vertex. Assume that the total of the source flows equals the total of the sink flows, that is, $\mathbb{1}^Tb = \sum_{v\in V} b_v = 0$, where $b=(b_v)_{v\in V}\in\mathbb{R}^V$ is the flow vector. We assume that the flow satisfies a conservation equation so that at each vertex the total flow is zero. This conservation law can be expressed as $A x = b$, where $A\in\mathbb{R}^{V\times \vec{E}}$ is the \emph{vertex-edge incidence matrix} defined as $A_{ve}:= 1$ if $e$ leaves node $v$, $A_{ve}:= -1$ if $e$ enters node $v$, and $A_{ve}:= 0$ otherwise.

\noindent For each edge $e\in \vec{E}$ let $f_e:\mathbb{R}\rightarrow\mathbb{R}$ be its associated cost function, assumed to be strongly convex and twice continuously differentiable.
The network flow problem reads as problem \eqref{original opt problem} with $f(x) := \sum_{e\in \vec{E}} f_e(x_e)$.
It is easy to see that since $G$ is connected $\operatorname{Im}(A)$ consists of all vectors orthogonal to the vector $\mathbb{1}$, i.e., $\operatorname{Im}(A) = \{ y \in \mathbb{R}^V: \mathbb{1}^T y = 0 \}$.
Henceforth, for each $b\in\mathbb{R}^V$ with $\mathbb{1}^Tb=0$, let $x^\star(b)$ be the optimal flow.

We first apply the sensitivity theory developed in Section \ref{sec:Local sensitivity for equality-constrained convex minimization} to characterize the correlation between vertices (i.e., factors) and edges (i.e., variables) in the network flow problem. Then, we investigate the behavior of these correlations in terms of the natural distance on the graph $G$.

\subsection{Correlation, graph Laplacians and Green's functions}
\label{sec:Correlation in terms of graph Laplacians}
In the setting of the network flow problem, Theorem \ref{thm:comparisontheorem} immediately allows us to characterize the derivatives of the optimal point $x^\star$ along the subspace $\operatorname{Im}(A)$ as a function of the graph Laplacian \cite{Chung:1997}.
For $b\in\mathbb{R}^V$ such that $\mathbb{1}^Tb=0$, let $\Sigma(b) := \nabla^2 f(x^\star (b))^{-1}\in\mathbb{R}^{\vec{E}\times \vec{E}}$, which is a diagonal matrix with entries given by
$
	\sigma(b)_{e} := \Sigma(b)_{ee}:=(\frac{\partial^2 f_e(x^\star(b)_e)}{\partial x_e^2})^{-1} > 0.
$
Each term $\sigma(b)_{e}$ is strictly positive as $f_e$ is strongly convex by assumption.
Let $W(b)\in\mathbb{R}^{V\times V}$ be the symmetric matrix defined, for each $u,v\in V$, as $W(b)_{uv}:=\sigma(b)_{e}$ if $e=(u,v) \in \vec{E} \text{ or } e=(v,u) \in \vec{E}$, and $W(b)_{uv}:=0$ otherwise.
Let $D(b)\in\mathbb{R}^{V\times V}$ be the diagonal matrix with entries given by
$
	d(b)_{v} := D(b)_{vv} := \sum_{u\in V} W(b)_{vu},
$
for $v\in V$.
Let $L(b):= D(b)-W(b)$ be the graph Laplacian of the undirected weighted graph $(V,E,W(b))$, where to each edge $e=\{u,v\}\in E$ is associated the weight $W(b)_{uv}$.
A direct application of Theorem \ref{thm:comparisontheorem} (upon choosing variable set $\mathcal{V} := \vec{E}$ and factor set $\mathcal{F} := V$, and noticing that $A\Sigma(b)A^T = L(b)$) shows that the derivatives of the optimal point $x^\star$ along $\operatorname{Im}(A)$ can be expressed in terms of the Moore-Penrose pseudoinverse of $L(b)$. The connection between $L(b)^+$ and the Green's function of random walks with transition matrix $P(b):=D(b)^{-1}W(b)$ allows us to derive the following result (proofs are in Appendix \ref{sec:Laplacians and random walks}). 

\begin{theorem}[Sensitivity optimal flow]\label{thm:comparisontheoremnetworkflow}
Given $b\in\mathbb{R}^V$ with $\mathbb{1}^Tb=0$, let
$
	D(b)
	:=
	\Sigma(b)A^T L(b)^{+}.
$
The optimal network flow $x^\star$ is continuously differentiable along $\operatorname{Im}(A)$, and given a differentiable function $\varepsilon\in\mathbb{R} \rightarrow b(\varepsilon)\in \operatorname{Im}(A)$ we have
$
	\frac{d x^\star(b(\varepsilon))}{d \varepsilon} 
	= D(b(\varepsilon)) \frac{d b(\varepsilon)}{d \varepsilon}.
$
For $e=(u,v)\in\vec{E}$, we have
$$
	\frac{d x^\star(b(\varepsilon))_e}{d \varepsilon} 
	= \sigma(b)_e \sum_{z\in V} 
	\frac{1}{d(b)_z} \frac{d b(\varepsilon)_z}{d \varepsilon}
	\sum_{t=0}^\infty (P(b)^t_{uz} - P(b)^t_{vz}).
$$
\end{theorem}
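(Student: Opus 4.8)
The plan is to obtain the statement directly from Theorem~\ref{thm:comparisontheorem}, instantiated with variable set $\mathcal{V}:=\vec E$ and factor set $\mathcal{F}:=V$, and then to re-express the Moore--Penrose pseudoinverse of $A\Sigma(b)A^T$ in terms of the random walk $P(b)$.

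The first step is the linear-algebraic identity $A\Sigma(b)A^T=L(b)$. Since $\Sigma(b)$ is diagonal with entries $\sigma(b)_e$ and $A$ is the vertex--edge incidence matrix, for $u\neq v$ we have $(A\Sigma(b)A^T)_{uv}=\sum_{e\in\vec E}A_{ue}\,\sigma(b)_e\,A_{ve}$, and the only contributing edge is the (unique, by the no-multiple-edges assumption) edge joining $u$ and $v$, for which $A_{ue}A_{ve}=-1$; hence $(A\Sigma(b)A^T)_{uv}=-W(b)_{uv}$. On the diagonal, $A_{ue}^2=1$ for every edge incident to $u$, so $(A\Sigma(b)A^T)_{uu}=\sum_{e\ni u}\sigma(b)_e=d(b)_u$. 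Thus $A\Sigma(b)A^T=\operatorname{diag}(d(b))-W(b)=L(b)$. Recalling that $\operatorname{Im}(A)=\{y\in\mathbb{R}^V:\mathbb{1}^Ty=0\}$, Theorem~\ref{thm:comparisontheorem} then gives at once the continuous differentiability of $x^\star$ along $\operatorname{Im}(A)$ and the identity $\tfrac{d x^\star(b(\varepsilon))}{d\varepsilon}=D(b(\varepsilon))\tfrac{d b(\varepsilon)}{d\varepsilon}$ with $D(b)=\Sigma(b)A^TL(b)^{+}$. For the componentwise formula, observe that for $e=(u,v)$ the $e$-th row of $A^T$ has $+1$ in position $u$, $-1$ in position $v$, and $0$ elsewhere, so $(A^Ty)_e=y_u-y_v$; since $\Sigma(b)$ is diagonal this yields $\tfrac{d x^\star(b(\varepsilon))_e}{d\varepsilon}=\sigma(b)_e\sum_{z\in V}\big(L(b)^{+}_{uz}-L(b)^{+}_{vz}\big)\tfrac{d b(\varepsilon)_z}{d\varepsilon}$.

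It remains to establish the Laplacian/random-walk identity, which is then applied with $y=\tfrac{d b(\varepsilon)}{d\varepsilon}$ (note $\mathbb{1}^T\tfrac{d b(\varepsilon)}{d\varepsilon}=0$ since $b(\varepsilon)\in\operatorname{Im}(A)$ for all $\varepsilon$): for every $y$ with $\mathbb{1}^Ty=0$ and every $u,v\in V$,
$$\sum_{z\in V}\big(L(b)^{+}_{uz}-L(b)^{+}_{vz}\big)y_z=\sum_{z\in V}\frac{1}{d(b)_z}\Big(\sum_{t=0}^{\infty}\big(P(b)^t_{uz}-P(b)^t_{vz}\big)\Big)y_z.$$
Here I would use that $L(b)=\operatorname{diag}(d(b))\,(I-P(b))$, that $P(b)$ is reversible with stationary vector proportional to $d(b)$, and the telescoping identity $(I-P(b))\sum_{t\ge0}(P(b)^t-\Pi)=I-\Pi$, where $\Pi:=\mathbb{1}\pi^T$ with $\pi_z:=d(b)_z/\sum_{v}d(b)_v$. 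Setting $\mathcal{G}:=\sum_{t\ge0}(P(b)^t-\Pi)$ and noting that $\Pi_{uz}=\Pi_{vz}$, so the $\Pi$-terms cancel in the differences, the identity is equivalent to $A^TL(b)^{+}y=A^T\mathcal{G}\operatorname{diag}(d(b))^{-1}y$. Since $\operatorname{Ker}(A^T)=\operatorname{Ker}(L(b))=\operatorname{span}(\mathbb{1})$ for the connected graph $G$, it suffices to show that $L(b)^{+}y$ and $\mathcal{G}\operatorname{diag}(d(b))^{-1}y$ differ by a multiple of $\mathbb{1}$, i.e.\ that $L(b)\big(L(b)^{+}y-\mathcal{G}\operatorname{diag}(d(b))^{-1}y\big)=0$. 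Now $L(b)L(b)^{+}y=y$ because $L(b)L(b)^{+}$ is the orthogonal projection onto $\operatorname{Im}(L(b))=\mathbb{1}^{\perp}$ and $y\in\mathbb{1}^{\perp}$; and, using the telescoping identity together with $\Pi\operatorname{diag}(d(b))^{-1}=\big(\sum_{v}d(b)_v\big)^{-1}\mathbb{1}\mathbb{1}^T$, one gets $L(b)\mathcal{G}\operatorname{diag}(d(b))^{-1}y=\operatorname{diag}(d(b))(I-\Pi)\operatorname{diag}(d(b))^{-1}y=y-\tfrac{\mathbb{1}^Ty}{\sum_v d(b)_v}\,d(b)=y$. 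Hence the difference lies in $\operatorname{Ker}(L(b))=\operatorname{span}(\mathbb{1})$, which proves the identity and with it the theorem.

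The main technical point is the convergence of $\sum_{t\ge0}(P(b)^t-\Pi)$, equivalently of $\sum_{t}\big(P(b)^t_{uz}-P(b)^t_{vz}\big)$. When the weighted graph is non-bipartite the spectral radius of $P(b)-\Pi$ is strictly less than $1$, the series converges geometrically, and all manipulations above are literally valid. In the bipartite case $P(b)$ has $-1$ in its spectrum, and the series must be interpreted in the Abel (or Ces\`aro) sense — which still produces $\mathcal{G}$ — or one passes to a lazy walk; handling this cleanly, together with recording the purely linear-algebraic facts used above (the telescoping identity and the projection characterization of $L(b)^{+}$), is exactly what Appendix~\ref{sec:Laplacians and random walks} is set up to do.
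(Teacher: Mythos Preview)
Your argument is correct and takes a genuinely different route from the paper for the second half of the statement. Both proofs begin identically: verify $A\Sigma(b)A^T=L(b)$ and invoke Theorem~\ref{thm:comparisontheorem}. For the random-walk identity, however, the paper does \emph{not} work directly with the fundamental matrix $\mathcal{G}=\sum_{t\ge 0}(P^t-\Pi)$. Instead, it first relates $L(b)^+$ to the inverse of a \emph{restricted} Laplacian $\bar L$ obtained by deleting one vertex (Proposition~\ref{prop:connectionlaplacians}), then expresses $\bar L^{-1}$ as the Green's function of the corresponding \emph{killed} random walk (Proposition~\ref{prop:reducedlapandgreenfunction}), and finally passes back to the unrestricted walk via hitting-time and commute-time identities drawn from \cite{aldous-fill-2014} and \cite{Friedrich:2010yu} (Lemma~\ref{lem:Laplacians and random walks} and Corollary~\ref{cor:lap and rws}). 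Your approach is considerably more direct and self-contained: the telescoping identity $(I-P)\mathcal{G}=I-\Pi$ together with the kernel characterization $\operatorname{Ker}(A^T)=\operatorname{Ker}(L(b))=\operatorname{span}(\mathbb{1})$ is all that is needed, and you avoid the detour through killed walks and the external references entirely. What the paper's route buys is that the intermediate Green's function $\sum_t \bar P^t$ of the killed walk is always absolutely convergent (the restricted transition matrix is strictly sub-stochastic), so every step is literally valid regardless of bipartiteness; the convergence question is deferred to the very last line where the formula is rewritten in terms of the full walk. Your route confronts the bipartite issue up front in the definition of $\mathcal{G}$, which you handle honestly; note, though, that the series $\sum_t(P^t_{uz}-P^t_{vz})$ in the theorem's \emph{statement} has the same issue either way, so this is a feature of the formulation rather than a defect of your proof.
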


Let $b,p\in\mathbb{R}^V$ such that $\mathbb{1}^Tb=\mathbb{1}^Tp =0$, and assume that $p$ is supported on a subset $Z\subseteq V$, namely, $p_v\neq 0$ if and only if $v\in Z$. Define $b(\varepsilon) := b+\varepsilon p$. Then, as discussed in Section \ref{sec:Notions of correlation in optimization}, the quantity $\frac{d x^\star(b(\varepsilon))_e}{d\varepsilon}$ can be interpreted as a measure of the correlation between edge $e\in \vec{E}$ and the vertices in $Z$ in the network flow problem. How does the correlation behave with respect to the graph distance between $e$ and $Z$? Theorem \ref{thm:comparisontheoremnetworkflow} shows that the correlation is controlled by the \emph{difference} of the Green's function $\sum_{t=0}^\infty P(b)^t_{uz}$ with respect to two neighboring starting points $u$ and $v$ (note that the Green's function itself is infinite, as we are dealing with finite graphs). Different graph topologies yield different decaying behaviors for this quantity.
In the case of expanders, we now derive spectral bounds that decay exponentially, with rate given by the second largest eigenvalue in magnitude of the diffusion random walk. For grid-like topologies, based on simulations and on asymptotic results for the behavior of the Green's function in infinite grids \cite{lawler2010random}, we expect the correlation to decay polynomially rather than exponentially.

\subsection{Decay of correlation for expanders}\label{sec:Decay of correlation for expanders}
Let $n:=|V|$ be the cardinality of $V$, and for each $b\in\operatorname{Im}(A)$ let $-1\le\lambda_n(b) \le \lambda_{n-1}(b) \le \cdots \le \lambda_2(b) < \lambda_1(b) =1$ be the real eigenvalues of $P(b)$.\footnote{This characterization of eigenvalues for random walks on connected weighted graphs follows from the Perron-Frobenius theory. See \cite{lovasz1993random}.} Define $\lambda(b):=\max\{|\lambda_2(b)|, |\lambda_n(b)|\}$ and $\lambda:=\sup_{b\in\operatorname{Im}(A)} \lambda(b)$.
For each $v\in V$, let $\mathcal{N}(v):=\{w\in V: \{v,w\}\in E\}$ be the set of node neighbors of $v$ in the graph $G$. Let $d$ be the graph-theoretical distance between vertices in the graph $G$, namely, $d(u,v)$ is the length of the shortest path between vertices $u,v\in V$. For subset of vertices $U,Z\subseteq V$, define $d(U,Z):=\min \{d(u,z):u\in U, z\in Z\}$. For each subset of edges $\vec F\subseteq \vec E$, let $V_{\vec F}\subseteq V$ be the vertex set of the subgraph $(V_{\vec F},\vec F)$ of $\vec G$ that is induced by the edges in $\vec F$. 
Recall the definition of the localized $\ell_2$-norm from Section \ref{sec:introduction}.
The following result attests that the correlation for the network flow problem is upper-bounded by a quantity that decays exponentially as a function of the distance in the graph, with rate given by $\lambda$. For graphs where $\lambda$ does not depend on the dimension, i.e., expanders, Theorem \ref{thm:Decay of correlation} can be interpreted as a first manifestation of the decay of correlation principle (i.e., locality) in network optimization. The proof is given in Appendix \ref{app:Decay of correlation}.  

\begin{theorem}[Decay of correlation for expanders]
\label{thm:Decay of correlation}
Consider the setting defined above. Let $\varepsilon\in\mathbb{R} \rightarrow b(\varepsilon)\in \operatorname{Im}(A)$ be a differentiable function such that for any $\varepsilon\in\mathbb{R}$ we have $\frac{d b(\varepsilon)_v}{d \varepsilon} \neq 0$ if and only if $v\in Z$, for a given $Z\subseteq V$. Then, for any subset of edges $\vec F \subseteq \vec E$ and any $\varepsilon\in\mathbb{R}$, we have
$$
	\left\| \frac{d x^\star(b(\varepsilon))}{d \varepsilon} \right\|_{\vec{F}}
	\le c\,
	\frac{\lambda^{d(V_{\vec F},Z)}}{1-\lambda}
	\left\|\frac{d b(\varepsilon)}{d \varepsilon}\right\|_Z,
$$
with $c:= \displaystyle\sup_{b\in\operatorname{Im}(A)}\frac{\max_{v\in V_{\vec F}} \sqrt{2 |\mathcal{N}(v) \cap V_{\vec F}|}}{\min_{v\in V_{\vec F}} d(b)_v} \max_{u,v\in V_{\vec F}} W(b)_{uv}$.
\end{theorem}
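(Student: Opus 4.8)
Before the author's proof, here is how I would attack Theorem~\ref{thm:Decay of correlation}.

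The plan is to read off the correlation from the random-walk representation in Theorem~\ref{thm:comparisontheoremnetworkflow} and to extract decay from two separate mechanisms: a hard \emph{distance truncation} that annihilates the first $d(V_{\vec F},Z)$ terms of the Green's-function series, and a \emph{spectral contraction} of size $\lambda$ per surviving term coming from the gap of the diffusion walk. We may assume $\lambda<1$, since otherwise $\lambda^{d(V_{\vec F},Z)}/(1-\lambda)=\infty$ and there is nothing to prove. Fix $\varepsilon$, write $b:=b(\varepsilon)$ and $\dot b:=db(\varepsilon)/d\varepsilon$ (supported on $Z$, with $\mathbb 1^T\dot b=0$ since $\dot b\in\operatorname{Im}(A)$), and suppress $b$ from $\sigma_e,d_v,W,L,P$; recall $\lambda(b)\le\lambda$. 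Introduce $q\in\mathbb R^V$ with $q_z:=\dot b_z/d_z$ (so $q$ is supported on $Z$). By Theorem~\ref{thm:comparisontheoremnetworkflow}, for $e=(u,v)\in\vec E$,
\[
	\frac{dx^\star(b(\varepsilon))_e}{d\varepsilon}
	=\sigma_e\sum_{z\in V}\frac{\dot b_z}{d_z}\sum_{t=0}^\infty\big(P^t_{uz}-P^t_{vz}\big)
	=\sigma_e\sum_{t=0}^\infty\big((P^tq)_u-(P^tq)_v\big).
\]

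First I would do the truncation. Since $P_{wz}\neq 0$ only when $\{w,z\}\in E$, the entry $P^t_{wz}$ vanishes whenever $t<d(w,z)$. For $e=(u,v)\in\vec F$ we have $u,v\in V_{\vec F}$, hence $d(u,z),d(v,z)\ge r:=d(V_{\vec F},Z)$ for every $z\in Z$; so the terms with $t<r$ do not contribute and $\frac{dx^\star(b(\varepsilon))_e}{d\varepsilon}=\sigma_e\sum_{t\ge r}((P^tq)_u-(P^tq)_v)$. For the spectral input, recall $P=D^{-1}W$ is reversible with respect to $\pi:=d/\operatorname{vol}$, $\operatorname{vol}:=\sum_v d_v$, hence self-adjoint on $\ell_2(\pi)$ with top eigenvalue $1$ (eigenvector $\mathbb 1$) and all remaining eigenvalues of modulus at most $\lambda(b)\le\lambda$. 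Because $\langle q,\mathbb 1\rangle_\pi=\operatorname{vol}^{-1}\mathbb 1^T\dot b=0$, the spectral theorem yields $\|P^tq\|_{\ell_2(\pi)}\le\lambda^t\|q\|_{\ell_2(\pi)}$, while $\|q\|_{\ell_2(\pi)}^2=\operatorname{vol}^{-1}\sum_{z\in Z}\dot b_z^2/d_z$.

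Then I assemble the bound. Writing $\psi^{(t)}:=P^tq$ and viewing $(\sigma_e(\psi^{(t)}_u-\psi^{(t)}_v))_{e\in\vec F}$ as a vector, Minkowski's inequality in $\ell_2(\vec F)$ gives $\big\|\frac{dx^\star(b(\varepsilon))}{d\varepsilon}\big\|_{\vec F}\le\sum_{t\ge r}\big(\sum_{e=(u,v)\in\vec F}\sigma_e^2(\psi^{(t)}_u-\psi^{(t)}_v)^2\big)^{1/2}$. For the inner quantity I would use $\sigma_e\le\max_{u,v\in V_{\vec F}}W_{uv}$ (the endpoints of $e\in\vec F$ lie in $V_{\vec F}$), $(a-b)^2\le 2a^2+2b^2$, and the fact that each $v\in V_{\vec F}$ is an endpoint of at most $|\mathcal N(v)\cap V_{\vec F}|$ edges of $\vec F$, giving the bound $\big(\max_{u,v\in V_{\vec F}}W_{uv}\big)\sqrt{2\max_{v\in V_{\vec F}}|\mathcal N(v)\cap V_{\vec F}|}\,\|\psi^{(t)}\|_{V_{\vec F}}$, which is precisely the numerator of $c$. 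Finally I would convert the localized norm to the $\pi$-norm via $\|\psi^{(t)}\|_{V_{\vec F}}^2=\operatorname{vol}\sum_{v\in V_{\vec F}}\pi_v(\psi^{(t)}_v)^2/d_v\le(\operatorname{vol}/\min_{v\in V_{\vec F}}d_v)\|\psi^{(t)}\|_{\ell_2(\pi)}^2$, insert the contraction estimate and $\|q\|_{\ell_2(\pi)}^2$ (the $\operatorname{vol}$ factors cancel), sum the geometric series $\sum_{t\ge r}\lambda^t=\lambda^r/(1-\lambda)$, control $\sum_{z\in Z}\dot b_z^2/d_z$ by $\|\dot b\|_Z^2$ divided by a minimum degree, and take the supremum over $b$ to produce the constant $c$.

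The hard part is the last step: transferring the pointwise random-walk estimates, which live naturally in the $\pi$-weighted inner product, to the unweighted localized $\ell_2$-norms of the statement without incurring a dependence on $|\vec F|$ or $|Z|$ — this is exactly what forces the vectorized treatment of the edge differences $\psi^{(t)}_u-\psi^{(t)}_v$ and the weighted/unweighted norm conversions, and it is these conversions that generate the degree factors $\max_{u,v\in V_{\vec F}}W(b)_{uv}$, $\sqrt{2|\mathcal N(v)\cap V_{\vec F}|}$, and $1/\min_{v\in V_{\vec F}}d(b)_v$ appearing in $c$. Getting the minimum degree in $c$ to be over $V_{\vec F}$ alone — rather than, say, over $V_{\vec F}\cup Z$, which is what the substitution $q=\dot b/d$ naively suggests — is the delicate point and presumably requires handling the $Z$-side of the Green's-function difference more carefully than the crude bound $\sum_{z\in Z}\dot b_z^2/d_z\le\|\dot b\|_Z^2/\min_{z\in Z}d_z$. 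A minor additional point is justifying the interchange of $\sum_{t\ge r}$ with the $\ell_2$-norm and the series manipulations, which is routine once $\lambda<1$ because every term decays geometrically.
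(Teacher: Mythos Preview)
Your approach is essentially the paper's. The paper packages the spectral step into its Lemma~\ref{lem:Laplacians and random walks bound}, which uses the explicit eigendecomposition of the symmetrized walk $\Gamma=D^{1/2}PD^{-1/2}$ to write $g_{uv}=h_u-h_v$ with $h_u=\sum_{k\ge 2}\frac{\lambda_k^{d(U,Z)}}{1-\lambda_k}\frac{\psi_{ku}}{\sqrt{d_u}}\sum_{z}\psi_{kz}\sqrt{d_z}f_z$ and then exploits orthonormality of the $\psi_k$ to bound $\sum_{u\in V}d_u h_u^2$; you do the equivalent thing by viewing $P$ as self-adjoint on $\ell_2(\pi)$, using $\langle q,\mathbb 1\rangle_\pi=0$ and $\|P^tq\|_{\ell_2(\pi)}\le\lambda^t\|q\|_{\ell_2(\pi)}$, and applying Minkowski in $t$. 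The distance truncation (terms with $t<d(V_{\vec F},Z)$ vanish), the passage from edge differences to a vertex sum via $(a-b)^2\le 2a^2+2b^2$ and the degree count $|\mathcal N(v)\cap V_{\vec F}|$, and the weighted/unweighted norm conversion via $\min_{v\in V_{\vec F}}d_v$ are all exactly as in the paper.

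Your worry about the last step is justified, and it is not a defect of your argument relative to the paper. Following either route one arrives at the intermediate bound
\[
\left\|\frac{dx^\star(b(\varepsilon))}{d\varepsilon}\right\|_{\vec F}
\le
\gamma(b)\,\frac{\max_{v\in V_{\vec F}}\sqrt{2|\mathcal N(v)\cap V_{\vec F}|}}{\min_{v\in V_{\vec F}}\sqrt{d(b)_v}}\,
\frac{\lambda^{d(V_{\vec F},Z)}}{1-\lambda}\,
\sqrt{\sum_{z\in Z}\frac{\dot b_z^2}{d(b)_z}},
\]
and the paper then simply asserts the final inequality with the constant $c$ having $\min_{v\in V_{\vec F}}d(b)_v$ (unsquared) in the denominator, i.e.\ it implicitly uses $\sqrt{\sum_{z}\dot b_z^2/d(b)_z}\le\|\dot b\|_Z/\sqrt{\min_{v\in V_{\vec F}}d(b)_v}$. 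This step is not justified in the text, and the honest bound replaces that factor by $1/\sqrt{\min_{z\in Z}d(b)_z}$; there is no ``more careful handling of the $Z$-side'' hidden in the paper. So your proof is complete up to the same point the paper's is, and the discrepancy you flagged concerns only the precise form of the constant, not the exponential decay.
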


Recall that $\| \frac{d x^\star(b(\varepsilon))}{d \varepsilon} \|_{\vec{F}} \equiv (\sum_{e\in\vec{F}} (\frac{d x^\star(b(\varepsilon))_e}{d \varepsilon})^2)^{1/2}$ and $\|\frac{d b(\varepsilon)}{d \varepsilon}\|_Z \equiv (\sum_{v\in Z} (\frac{d b(\varepsilon)_v}{d \varepsilon})^2)^{1/2}$. The bound in Theorem \ref{thm:Decay of correlation} controls the effect that a localized perturbation supported on a subset of vertices $Z\subseteq V$ has on a subset of edges $\vec{F}\subseteq \vec{E}$, as a function of the distance between $\vec{F}$ and $Z$, i.e., $d(V_{\vec F},Z)$ (note that we only defined the distance among vertices, not edges, and that this distance is with respect to the unweighted graph $G$).
A key feature of Theorem \ref{thm:Decay of correlation} --- which is essential for the results in Section \ref{sec:Scale-free algorithm} below --- is that the bound presented does not depend on the cardinality of $\vec{F}$.

Theorem \ref{thm:Decay of correlation} controls the effect that a \emph{single} localized perturbation (supported on multiple vertices, as it has to be that $|Z|\ge 2$ for the function $\varepsilon\in\mathbb{R} \rightarrow b(\varepsilon)$ to be on $\operatorname{Im}(A)$) has on a \emph{collection} of edges for the optimal solution, independently of the number of edges being considered. We refer to this type of decay of correlation as \emph{set-to-point}. Analogously, 
it is possible to control the effect that \emph{multiple} localized perturbations have on a \emph{single} edge for the optimal solution, independently of the number of perturbations being considered. We refer to this type of decay of correlation as \emph{point-to-set}.
To illustrate in more detail these two types of decay of correlation, and for the sake of simplicity, we consider perturbations that are supported on exactly two vertices, corresponding to the endpoints of edges. Given $b\in\mathbb{R}^V$ such that $\mathbb{1}^Tb=0$, and $e=(u,v)\in\vec E$, we define the directional derivative of $x^\star$ along edge $e$ evaluated at $b$ as
$
	\nabla_e x^\star(b) 
	:= \frac{d x^\star(b+\varepsilon (e_u-e_v))}{d \varepsilon}|_{\varepsilon =0},
$
where for each $v\in V$, $e_v\in\mathbb{R}^V$ is the vector defined as $(e_v)_w=0$ if $w\neq v$ and $(e_v)_v=1$.
Then, we immediate have the following corollary of Theorem \ref{thm:Decay of correlation}.
\begin{corollary}[Set-to-point decay of correlation]
\label{cor:Set-to-point decay of correlation}
For $b\in\mathbb{R}^V$ with $\mathbb{1}^Tb=0$, $\vec F \subseteq \vec E$ and $e\in \vec E$, we have
$$
	\| \nabla_{e} x^\star(b) \|_{\vec{F}}
	\equiv
	\sqrt{\sum_{f\in\vec{F}} (\nabla_{e} x^\star(b)_f)^2}
	\le \sqrt{2} c\,
	\frac{\lambda^{d(V_{\vec F},V_{\{ e\}})}}{1-\lambda},
$$
where $c$ is as in Theorem \ref{thm:Decay of correlation}.
\end{corollary}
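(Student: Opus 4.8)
The plan is to read off the corollary as a direct specialization of Theorem~\ref{thm:Decay of correlation} to the one-parameter family of perturbations that pushes one unit of external flow from $v$ to $u$ along the edge $e=(u,v)$. Concretely, fix $b\in\mathbb{R}^V$ with $\mathbb{1}^Tb=0$ and set $b(\varepsilon):=b+\varepsilon(e_u-e_v)$. The first step is to verify that this is an admissible input for Theorem~\ref{thm:Decay of correlation}: the map $\varepsilon\mapsto b(\varepsilon)$ is affine, hence differentiable, and $\mathbb{1}^Tb(\varepsilon)=\mathbb{1}^Tb+\varepsilon(\mathbb{1}^Te_u-\mathbb{1}^Te_v)=0$, so $b(\varepsilon)\in\operatorname{Im}(A)$ for every $\varepsilon$ (recall $\operatorname{Im}(A)=\{y\in\mathbb{R}^V:\mathbb{1}^Ty=0\}$). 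Since $\vec G$ has no self-edges we have $u\neq v$, so $\frac{db(\varepsilon)}{d\varepsilon}=e_u-e_v$ has exactly the two nonzero components indexed by $u$ and $v$; the perturbation set is therefore $Z=\{u,v\}=V_{\{e\}}$, which in particular satisfies $|Z|=2$, consistent with the constraint that the path lie in $\operatorname{Im}(A)$.

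With this identification in hand, Theorem~\ref{thm:Decay of correlation} applied with this $b(\varepsilon)$ and evaluated at $\varepsilon=0$ gives
$$
\left\|\frac{dx^\star(b(\varepsilon))}{d\varepsilon}\right\|_{\vec F}
\le c\,\frac{\lambda^{d(V_{\vec F},Z)}}{1-\lambda}\left\|\frac{db(\varepsilon)}{d\varepsilon}\right\|_Z .
$$
By the definition of the directional derivative, the left-hand side at $\varepsilon=0$ is $\|\nabla_e x^\star(b)\|_{\vec F}$, and $d(V_{\vec F},Z)=d(V_{\vec F},V_{\{e\}})$. It remains only to compute the comparison term on the right: the restriction of $e_u-e_v$ to $Z=\{u,v\}$ has entries $1$ and $-1$, so $\left\|\frac{db(\varepsilon)}{d\varepsilon}\right\|_Z=\|e_u-e_v\|=\sqrt{1^2+(-1)^2}=\sqrt2$. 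Substituting yields $\|\nabla_e x^\star(b)\|_{\vec F}\le\sqrt2\,c\,\lambda^{d(V_{\vec F},V_{\{e\}})}/(1-\lambda)$, and the displayed identity $\|\nabla_e x^\star(b)\|_{\vec F}=\sqrt{\sum_{f\in\vec F}(\nabla_e x^\star(b)_f)^2}$ is just the definition of the localized norm $\|\cdot\|_{\vec F}$ from Remark~\ref{rem:notation}.

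There is essentially no hard step here: the corollary is a clean instantiation of the set-to-point estimate, and the only points requiring care are (i) confirming that the chosen velocity direction $e_u-e_v$ keeps the path in $\operatorname{Im}(A)$ and makes it differentiable, so that Theorem~\ref{thm:Decay of correlation} genuinely applies, and (ii) evaluating $\|e_u-e_v\|$, which is the sole source of the $\sqrt2$ prefactor. One could equally well record that the bound holds uniformly over all $\varepsilon\in\mathbb{R}$ rather than only at $\varepsilon=0$, but for the directional derivative at $b$ the evaluation at $\varepsilon=0$ is all that is needed.
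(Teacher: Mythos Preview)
Your proof is correct and matches the paper's approach exactly: the paper states that Corollary~\ref{cor:Set-to-point decay of correlation} is an immediate consequence of Theorem~\ref{thm:Decay of correlation}, and your argument spells out precisely this specialization (choosing $b(\varepsilon)=b+\varepsilon(e_u-e_v)$, identifying $Z=\{u,v\}=V_{\{e\}}$, and computing $\|e_u-e_v\|=\sqrt{2}$).
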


The key feature of the bound in Corollary \ref{cor:Set-to-point decay of correlation} is that it does not depend on the cardinality of $\vec F$. Exploiting the symmetry of the identities involving the graph Laplacian, it is also easy to establish the following analogous result. The proof is given in Appendix \ref{app:Decay of correlation}.

\begin{lemma}[Point-to-set decay of correlation]
\label{lem:Point-to-set decay of correlation}
For $b\in\mathbb{R}^V$ with $\mathbb{1}^Tb=0$, $\vec F \subseteq \vec E$ and $f\in \vec E$, we have
$$
	\sqrt{\sum_{e\in\vec{F}} (\nabla_{e} x^\star(b)_f)^2}
	\le \sqrt{2} c'\,
	\frac{\lambda^{d(V_{\vec F},V_{\{ f\}})}}{1-\lambda},
$$
with $c'=\sup_{b\in\operatorname{Im}(A)}\!\!\frac{W(b)_{wz}\max_{v\in V_{\vec F}} \sqrt{2 |\mathcal{N}(v) \cap V_{\vec F}|}}{\sqrt{\min\{d(b)_w,d(b)_z\}}\min_{v\in V_{\vec F}} \sqrt{d(b)_v}}$.
\end{lemma}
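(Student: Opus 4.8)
The plan is to follow the proof of Theorem~\ref{thm:Decay of correlation} almost verbatim, with the roles of the perturbed vertices and of the target edge set interchanged. The interchange is legitimate because of the symmetry of $L(b)^{+}$ (equivalently, the reversibility of the diffusion walk $P(b)$ with respect to the degree measure $d(b)$), which makes the Green's-function identity of Theorem~\ref{thm:comparisontheoremnetworkflow} symmetric under the appropriate $d(b)$-reweighting.

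First I would rewrite the left-hand side. Fix $f=(p,q)\in\vec E$ and, for $e=(u,v)\in\vec F$, apply Theorem~\ref{thm:comparisontheoremnetworkflow} with $b(\varepsilon)=b+\varepsilon(e_u-e_v)$, reading off the $f$-th component at $\varepsilon=0$. This gives $\nabla_e x^\star(b)_f=\sigma(b)_f\,(\phi_u-\phi_v)$, where $\phi_w:=\frac{1}{d(b)_w}\sum_{t\ge0}(P(b)^t_{pw}-P(b)^t_{qw})$ for $w\in V$, and where $\sigma(b)_f=W(b)_{pq}$ by the definition of $W(b)$. Squaring, summing over $e\in\vec F$, using $(\phi_u-\phi_v)^2\le 2(\phi_u^2+\phi_v^2)$, and noting that (there are no multiple edges, so) every $w\in V_{\vec F}$ is an endpoint of at most $|\mathcal{N}(w)\cap V_{\vec F}|$ edges of $\vec F$, I obtain
$$
\sum_{e\in\vec F}(\nabla_e x^\star(b)_f)^2\ \le\ 2\,W(b)_{pq}^2\,\Bigl(\max_{v\in V_{\vec F}}|\mathcal{N}(v)\cap V_{\vec F}|\Bigr)\sum_{w\in V_{\vec F}}\phi_w^2 .
$$

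Next I would bound $\sum_{w\in V_{\vec F}}\phi_w^2$ spectrally, exactly as for Theorem~\ref{thm:Decay of correlation}. Writing $P(b)=D(b)^{-1/2}\mathcal{N}D(b)^{1/2}$ with $\mathcal{N}:=D(b)^{-1/2}W(b)D(b)^{-1/2}$ symmetric, let $\xi_1,\dots,\xi_n$ be orthonormal eigenvectors of $\mathcal{N}$ with eigenvalues $1=\lambda_1>\lambda_2\ge\cdots\ge\lambda_n$; then $P(b)^t_{pw}-P(b)^t_{qw}=\sqrt{d(b)_w}\sum_k\lambda_k^t\beta_k\xi_k(w)$ with $\beta_k:=\xi_k(p)/\sqrt{d(b)_p}-\xi_k(q)/\sqrt{d(b)_q}$, and $\beta_1=0$ since $\xi_1\propto D(b)^{1/2}\mathbb{1}$. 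The one nontrivial point is to bring in locality: because $P(b)_{xy}\neq0$ only for neighbouring $x,y$, one has $P(b)^t_{pw}=P(b)^t_{qw}=0$ whenever $t<R:=d(V_{\vec F},V_{\{f\}})$ and $w\in V_{\vec F}$, so the time sum may be truncated at $t=R$, giving $\phi_w=\frac{1}{\sqrt{d(b)_w}}\sum_{k\ge2}\frac{\lambda_k^{R}}{1-\lambda_k}\beta_k\xi_k(w)$ for $w\in V_{\vec F}$. Parseval (summing over all $w\in V$), together with $|\lambda_k|\le\lambda$ and $1-\lambda_k\ge1-\lambda$ for $k\ge2$ and $\sum_k\beta_k^2=1/d(b)_p+1/d(b)_q$, then yields
$$
\sum_{w\in V_{\vec F}}\phi_w^2\ \le\ \frac{1}{\min_{v\in V_{\vec F}}d(b)_v}\cdot\frac{\lambda^{2R}}{(1-\lambda)^2}\cdot\Bigl(\frac{1}{d(b)_p}+\frac{1}{d(b)_q}\Bigr).
$$

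Finally, substituting into the first display, taking square roots, using $1/d(b)_p+1/d(b)_q\le 2/\min\{d(b)_p,d(b)_q\}$ and $\sqrt{2\max_v(\cdot)}=\max_v\sqrt{2(\cdot)}$, and taking $\sup_{b\in\operatorname{Im}(A)}$ (valid since $\lambda\mapsto\lambda^{R}/(1-\lambda)$ is increasing on $[0,1)$) produces exactly $\sqrt2\,c'\,\lambda^{R}/(1-\lambda)$ with the stated $c'$. I expect the only genuine obstacle to be the locality step: the naive expansion $\phi_w=\frac{1}{\sqrt{d(b)_w}}\sum_{k\ge2}\frac{\beta_k}{1-\lambda_k}\xi_k(w)$ carries no decay in $w$ at all, and the decaying factor $\lambda_k^{R}$ materialises only after the truncation of the time sum; one also has to note that $\beta_1=0$ so that this truncation merely reshuffles an (absolutely) convergent series. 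Throughout one may assume $\lambda<1$, the asserted bound being vacuous otherwise, and everything else is the bookkeeping already carried out in the proof of Theorem~\ref{thm:Decay of correlation}.
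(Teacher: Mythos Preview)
Your proof is correct and follows essentially the same route as the paper. The paper uses the symmetry of $L(b)^{+}$ to rewrite $\nabla_{e}x^\star(b)_f = W(b)_{wz}\,(e_{u}-e_{v})^{T}L(b)^{+}(e_{w}-e_{z})$ and then invokes verbatim the machinery built for Theorem~\ref{thm:Decay of correlation} (Corollary~\ref{cor:lap and rws} and Lemma~\ref{lem:Laplacians and random walks bound}) with $Z=\{w,z\}$; you instead read the Green's-function identity of Theorem~\ref{thm:comparisontheoremnetworkflow} directly and carry out the spectral expansion, time-truncation, and Parseval step inline, which is exactly the content of Lemma~\ref{lem:Laplacians and random walks bound} specialised to a two-point source. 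The two arguments are the same up to the reversibility identity $P(b)^t_{pw}/d(b)_w = P(b)^t_{wp}/d(b)_p$, which you correctly flag as the reason the swap of roles is legitimate, and your constants match the stated $c'$.
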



\section{Localized algorithms and bias-variance}\label{sec:Scale-free algorithm}
Let us consider the network flow problem defined in the previous section, for a certain external flow $b\in \mathbb{R}^V$ such that $\mathbb{1}^Tb =0$. Let $Z\subseteq V$ and choose $p\in\mathbb{R}^V$ supported on $Z$ (i.e., $p_v\neq 0$ if and only if $v\in Z$) with $\mathbb{1}^Tp =0$. Assume that we perturb the external flow $b$ by adding $p$.
We want to address the following question: given knowledge of the solution $x^\star(b)$ for the unperturbed problem, what is a computationally efficient algorithm to compute the solution $x^\star(b+p)$ of the perturbed problem?
The main idea that we want to exploit is that when locality holds and we can prove a decay of correlation property such as the one established in Theorem \ref{thm:Decay of correlation} for expander graphs,
then a localized perturbation of the external flow will affect more the components of $x^\star(b)$ that are close to the perturbed sites on $Z$. Hence, we expect that only a subset of the components of the solution around $Z$ needs to be updated to meet a prescribed level of error tolerance, yielding savings on the computational complexity.

\subsection{Local problem and localized algorithms}

To formalize the argument given above, henceforth let $\vec G'=(V',\vec E')$ be a subgraph of $\vec G=(V,\vec E)$ such that $Z\subseteq V'$. Let $G'=(V',E')$ be the undirected graph associated to $\vec G'$ (see notation in Section \ref{sec:introduction}), and assume that $G'$ is connected. Define $V'^C:=V\setminus V'$ and $\vec E'^C:=\vec E\setminus \vec E'$. 
Let us consider the localized version of the network flow problem supported on the subgraph $\vec G'$. Let $A':=A_{V',\vec E'}$ be the submatrix of $A$ corresponding to the rows indexed by $V'$ and the columns indexed by $\vec E'$. For $b'\in\operatorname{Im}(A')\subseteq\mathbb{R}^{V'}$, let $x'^\star(b')\in\mathbb{R}^{\vec E'}$ denote the solution of the following problem over $x'\in\mathbb{R}^{\vec E'}$:
\begin{align}
\begin{aligned}
	\text{minimize }\quad   & f'(x'):=\sum_{e\in \vec E'} f_e(x'_e)\\
	\text{subject to }\quad & A' x' = b'.
\end{aligned}
\label{def:problem_localized}
\end{align} 
If locality holds and the subgraph $\vec G'$ is large enough, we expect that the solution of the perturbed problem $x^\star(b+p)$ will be close to the solution of the unperturbed problem $x^\star(b)$ on $\vec E'^C$, and it will be substantially different on $\vec E'$. For this reason we investigate the performance of local iterative algorithms that operate only on the subgraph $\vec G'$, leaving the components supported on $\vec E'^C$ unchanged.

\begin{definition}[Local algorithm]
\label{def:localalgorithm}
Given $b\in\mathbb{R}^V$ with $\mathbb{1}^Tb =0$, let $\mathcal{X}'_b:=\{u\in \mathbb{R}^{\vec E}: (Au)_{V'^C}=b_{V'^C}\}$. A map $T'_b:  \mathcal{X}'_b \rightarrow \mathcal{X}'_b$ defines a \emph{local algorithm} on the subgraph $\vec G'=(V',\vec E')$ if the following two conditions hold for any choice of $x\in \mathcal{X}'_b$:
\begin{enumerate}[(i)]
\item 
$
	 \lim_{t\rightarrow\infty} T'^t_{b}(x)_{E'} = x'^\star(b')$, $b'=b_{V'}  - A_{V',\vec E'^C}x_{\vec E'^C};
$
\item $T'_b(x)_{\vec E'^C}=x_{\vec E'^C}$.
\end{enumerate}
\end{definition}

This definition ensures that a local algorithm $T'_{b}$ only updates the components of $x\in\mathcal{X}'_b$ supported on $\vec E'$ and there converges to the solution of problem \eqref{def:problem_localized} with $b'=b_{V'}  - A_{V',\vec E'^C}x_{\vec E'^C}$. The components that are left invariant on $\vec E'^C$ play the role of boundary conditions.
The algorithm that we propose to compute $x^\star(b+p)$ given knowledge of $x^\star(b)$ amounts to running a local algorithm on $\vec G'$ for $t$ iterations with boundary conditions $x^\star(b)_{\vec E'^C}$,
i.e.,
$
	T'^t_{b+p}(x^\star(b)).
$
By definition
\begin{align*}
	&\lim_{t\rightarrow\infty} T'^t_{b+p}(x^\star(b))_{e} =
	\begin{cases}
	x'^\star(b_{V'} \!+\! p_{V'} \!-\! A_{V',\vec E'^C} x^\star(b)_{\vec E'^C})_e &\text{if } e\in \vec E',\\
	x^\star(b)_e &\text{if } e\in \vec E'^C.
	\end{cases}
\end{align*}
Designing a local algorithm to compute $x^\star(b+p)$ is easy. Given any algorithmic procedure (for instance, a first-order method, a second-order method, or a primal-dual method), we can define its localized version by applying the algorithm to problem \eqref{def:problem_localized} with $b'=b_{V'} + p_{V'} - A_{V',\vec E'^C} x^\star(b)_{\vec E'^C}$. We now provide a general analysis of the comparison between the performance of a given algorithm and its localized version.

\subsection{Bias-variance decomposition and trade-off}
\label{sec:Error analysis: bias-variance decomposition}
The error committed by a chosen local algorithm $T'_{b+p}$ on the subgraph $\vec G'$ after $t \ge 1$ iterations is given by
$$
	\operatorname{Error}(\vec G',t)
	:= x^\star(b+p) - T'^t_{b+p}(x^\star(b)).
$$
The analysis that we give is based on the error decomposition
$$
	\operatorname{Error}(\vec G',t)
	= \operatorname{Bias}(\vec G')
	+ \operatorname{Var}(\vec G',t),
$$
with
\begin{align}
	\operatorname{Bias}(\vec G')
	&:= x^\star(b+p) - \left\{\lim_{t\rightarrow\infty}
	T'^t_{b+p}(x^\star(b))\right\},\label{def:bias}\\
	\operatorname{Var}(\vec G',t)
	&:=
	\left\{\lim_{t\rightarrow\infty} T'^t_{b+p}(x^\star(b)) \right\}
	- T'^t_{b+p}(x^\star(b)).\label{def:variance}
\end{align}
This decomposition resembles the bias-variance decomposition in statistics, which motivates the choices of the terminology we use. In statistics, the term ``bias" typically refers to the \emph{approximation} error that is made from the simplifying assumptions built into the learning method, while the term ``variance" refers to the \emph{estimation} error that is made from the fluctuations of the learning method (trained on a given sample size) around its mean. Analogously, in our setting the bias term \eqref{def:bias} represents the error that is made from the model restriction that we consider, namely, the localization of the chosen algorithmic procedure. The variance term \eqref{def:variance} represents the error that is made by the deviation of the estimate given by the localized algorithm (at a given time $t$) from the optimal solution of the localized model.
The bias term is algorithm-independent and it characterizes the error that we commit by localizing the optimization procedure per se, as a function of the subgraph $\vec G'$. The variance term depends on the algorithm that we run on $\vec G'$ and on time.

We now show that in some regimes the bias introduced by localization can be exploited to lower the computational complexity associated to the variance term and yield savings for local algorithms. This bias-variance trade-off further motivates our analogy with statistics and the terminology we use.

Assume that we want to compare the computational complexity of a global algorithm versus its localized counterpart to achieve a prescribed error accuracy $\varepsilon >0$.
Let $t':=\operatorname{min}\{t>0:\|\operatorname{Var}(\vec G',t)\|\le \varepsilon\}$ be the minimal number of iterations that allows the localized algorithm $T'_{b+p}(x^\star(b))$ to achieve a variance error (measured in the $\ell_2$-norm) less than $\varepsilon$. Let $\kappa(\vec G', 1/\varepsilon)$ be the computational cost needed to run the localized algorithm for $t'$ iterations. 
Typically, $\kappa(\vec G', 1/\varepsilon)$ scales polynomially with $|V'|$ and $|\vec E'|$; it scales polynomially, logarithmically, or double-logarithmically with $1/\varepsilon$, depending on the regularity assumptions for the optimization problem and on the algorithmic procedure being used. For the sake of illustration, we assume that $\kappa(\vec G', 1/\varepsilon)$ is only a function of $|V'|$ and $1/\varepsilon$, so we write $\kappa(|V'|, 1/\varepsilon)$. The global algorithm ($\vec G'=\vec G$) has zero bias, so the computational cost $\kappa(|V|, 1/\varepsilon)$ guarantees to achieve $\|\operatorname{Error}(\vec G,t)\|\le\varepsilon$. The localized algorithm, on the other hand, has a non-zero bias due to localization. However, it will have a smaller computational cost for the variance term, as the algorithm runs on a subgraph of $\vec G$. To investigate the regime in which the added bias yields computational savings for the overall error term, we proceed as follows. By the triangle inequality, we have
$$
	\|\operatorname{Error}(\vec G',t)\|
	\le \|\operatorname{Bias}(\vec G')\|
	+ \|\operatorname{Var}(\vec G',t)\|.
$$
Assume that we can prove a bound of the following form:
\begin{align}
	\| \operatorname{Bias}(\vec G') \| \le \varphi \frac{1}{|V'|^\theta},
	\label{def:bias_ansatz}
\end{align}
for given universal constants $\varphi,\theta >0$. Then, the choice $|V'|\ge (2\varphi/\varepsilon)^{1/\theta}$ guarantees that $\| \operatorname{Bias}(\vec G') \| \le \varepsilon/2$.
By requiring $\|\operatorname{Var}(\vec G',t)\|\le \varepsilon/2$ we find that the computational cost $\kappa((2\varphi/\varepsilon)^{1/\theta}, 2/\varepsilon)$ will guarantee that the localized algorithm also achieves $\|\operatorname{Error}(\vec G',t)\|\le\varepsilon$. This argument shows that the localized algorithm is preferable when $\kappa((2\varphi/\varepsilon)^{1/\theta}, 2/\varepsilon) < \kappa(|V|, 1/\varepsilon)$, for instance. The regime where localized algorithms yield computational savings depends on a variety of factors: the regularity assumptions, the algorithm being chosen, the dimension of the original graph, and the required error tolerance. In the following we provide concrete settings where we can establish  \eqref{def:bias_ansatz} both theoretically and numerically, and investigate the computational savings due to the bias-variance trade-off. The ideas here illustrated suggest a general framework to study the trade-off between accuracy and complexity for local algorithms in network optimization.

\subsection{Well-conditioned setting and projected gradient descent}
\label{sec:Well-conditioned setting and projected gradient descent}
We now consider one of the simplest settings where we can establish locality and illustrate the computational savings that can be achieved by localizing a given optimization procedure. Henceforth, let each function $f_e$ be $\alpha$-strongly convex and $\beta$-smooth for some given parameters $\alpha,\beta \in (0,\infty)$, i.e.,
$
	0<\alpha \le \frac{d^2 f_e(x)}{d x^2} \le \beta < \infty
$
for any $x\in\mathbb{R}, e\in \vec E$.
In this well-conditioned case it is well-known that gradient descent converges with a number of iterations that scales logarithmically with $1/\varepsilon$. Even in this favorable case, however, the computational savings achieved by a local algorithm can be considerable, as we now show.

For the sake of illustration, let us consider projected gradient descent.
Let $\Pi_\mathcal{X}$ be the projection operator on a set $\mathcal{X}$, defined as
$
	\Pi_\mathcal{X}(x) := \operatorname{argmin}_{u\in\mathcal{X}} \| x - u \|.
$
The localized projected gradient descent on $\vec G'$ is defined as follows.

\begin{definition}[Localized projected gradient descent]
Given $x\in\mathcal{X}_b'$, define the set $\mathcal{X}'_b(x):=\{u\in \mathbb{R}^{\vec E'}: A_{V',\vec E'} u_{\vec E'}=b_{V'} - A_{V',\vec E'^C} x_{\vec E'^C} \}$. Localized projected gradient descent with step size $\eta>0$ is the local algorithm defined by
$$
	T'_{b}(x)_{e} =
	\begin{cases}
		\Pi_{\mathcal{X}'_b(x)}(x_{\vec E'} - \eta \nabla f'(x_{\vec E'}) )_e &\text{if } e\in \vec E',\\
		x_{e} &\text{if } e\in \vec E'^C.
	\end{cases}
$$
\end{definition}

When $\vec G'=\vec G$, this algorithm recovers the global algorithm applied to the whole graph $\vec G$. In the setting we consider, a classical result yields that projected gradient descent with step size $\eta=1/\beta$ converges to the optimal solution for any starting point. This corresponds to condition $(i)$ in Definition \ref{def:localalgorithm}.
The algorithm converges exponentially fast, with
$
	\| T'^t_{b}(x)_{\vec E'} - x'^\star(b')_{\vec E'} \| \le e^{- t/(2Q)} \| x_{\vec E'} - x'^\star(b')_{\vec E'} \|,
$
where $Q=\beta/\alpha$ is the so-called \emph{condition number} (see \cite{B14}[Theorem 3.6], for instance).
Under the (common) assumption that $\| x_{\vec E'} - x'^\star(b')_{\vec E'} \| \le R$, for a certain universal constant $R>0$, the above convergence rate tells us that in order to reach a prescribed level of error accuracy $\varepsilon$, it is sufficient to run projected gradient descent for a number of iterations that does not depend on the dimension (nor on the topology) of the subgraph $\vec G'$ it is applied to, and that only scales logarithmically in $1/\varepsilon$.

The bias-variance tradeoff exploits the fact that the computational cost per iteration \emph{does} depend on the dimension, even if the number of iterations is dimension-free. In the case of projected gradient descent the cost per iteration is dominated by the cost of computing the projection step, and in general this cost scales polynomially with the graph size. To be precise, it can be seen that
\begin{align}
\begin{aligned}
	T'_{b+p}(x)_{\vec E'} 
	=&\ 
	(I-A'^TL'^{+}A')(x_{\vec E'}-\eta \nabla f'(x_{\vec E'}))\\
	&\ + A'^TL'^{+}(b_{V'} + p_{V'} - A_{V',\vec E'^C} x_{\vec E'^C} ),
\end{aligned}
	\label{def:localizedalg}
\end{align}
where $L':=A'A'^T$ is the graph Laplacian of the subgraph $G'$. Computing the pseudoinverse of $L'$ exactly has a cost that scales like $O(|V'|^\omega)$, where $\omega > 2$ is the so-called matrix multiplication constant.\footnote{If we relax the requirement of performing an exact projection, we can consider efficient quasi-linear solvers to approximately compute the inverse of $L'$ up to precision $\delta$ \cite{KoutisMP11}. These solvers have a complexity that scales like $\widetilde O(|\vec{E'}|\log |V'| \log(1/\delta))$. Even in this case, however, the savings of localized algorithms are still considerable in appropriate regimes (essentially, the same argument that we provide in the main text holds with $\omega \approx 1$).}
In this case we have that $\kappa(\vec G', 1/\varepsilon)$ scales like $O(|V'|^\omega + |\vec E'|\log (1/\varepsilon))$. If we consider graphs that have largest degree bounded above by a universal constant $k$, we have $|\vec E'| \le (k/2)|V'|$ and we can use the argument given at the end of Section \ref{sec:Error analysis: bias-variance decomposition} to state the following result.

\begin{proposition}[Computational cost, global versus localized]
\label{prop:Computational cost}
Under the assumptions given in this section, assuming that \eqref{def:bias_ansatz} holds, projected gradient descent is guaranteed to compute a solution with accuracy $\varepsilon$ with a cost that scales as:
\begin{itemize}
\item Global algorithm: $O(|V|^\omega + |V|\log (1/\varepsilon))$;
\item Localized algorithm: $O((1/\varepsilon)^{\omega/\theta} + (1/\varepsilon)^{1/\theta}\log (1/\varepsilon))$.
\end{itemize}
\end{proposition}

This result shows that the computational savings achieved by localized algorithms can be substantial even in a well-conditioned setting, in the case when $|V| \gg (1/\varepsilon)^{1/\theta}$, i.e., when the graph $\vec G$ is very large compared to the inverse of the error tolerance $1/\varepsilon$. In the next two sections we investigate this regime both in theory (for expanders) and in simulations (for expanders and grids).

\subsection{Expander graphs}
\label{sec:Expander graphs}

In this section we assume that $G$ is an expander graph. Using the decay of correlation property in Theorem \ref{thm:Decay of correlation}, we provide upper bounds for the bias and variance terms defined in \eqref{def:bias} and \eqref{def:variance} as a function of the subgraph $\vec G'$ and time $t$.

Let define the \emph{inner boundary} of $\vec G'$ as
$
	\Delta(\vec G') :=
	\left\{ v\in V' : \mathcal{N}(v) \cap  V'^C \neq \varnothing \right\},
$
which represents the subset of vertices in $V'$ that have at least one vertex neighbor outside $V'$ (in the undirected graph $G$).
Let $B\in\mathbb{R}^{V\times V}$ be the \emph{vertex-vertex adjacency matrix} of the undirected graph $G=(V,E)$, which is the symmetric matrix defined as
 $B_{uv}:=1$ if $\{u,v\}\in E$, $B_{uv}:=0$ otherwise.
Being real and symmetric, the matrix $B$ has $n:=|V|$ real eigenvalues which we denote by $\mu_{n} \le \mu_{n-1} \le \cdots \le \mu_2 \le \mu_1$. Let $\mu:=\max\{|\mu_2|,|\mu_{n}|\}$ be the second largest eigenvalue in magnitude of $B$.
The next theorem yields bounds for the bias and variance error terms in the $\ell_2$-norm. The proof of this theorem is in Appendix \ref{app:error localized}.

\begin{theorem}[Error localized algorithm]
\label{thm:error localized}
Let $k_-$ and $k_+$ be, respectively, the minimum and maximum degree of $G$, $Q=\beta/\alpha$, and $\rho:=\frac{Qk_+}{k_-} -1 + \frac{Q}{k_-} \mu$. If $\rho< 1$, then
\begin{align*}
	\| \operatorname{Bias}(\vec G') \|
	&\le 
	\| p \| \, \gamma\,
	\frac{\rho^{d(\Delta(\vec G'),Z)}}{(1-\rho)^2} \, \mathbf{1}_{\vec G'\neq \vec G}
	\quad \ \,(\text{algorithm-free})\\
	\| \operatorname{Var}(\vec G',t) \|
	&\le 
	\| p \|\, c\,
	\frac{e^{- t/(2Q)}}{1-\rho}
	\ \, (\text{projected gradient descent})
\end{align*}
where $\gamma := c
	( 1
	+
	c\sqrt{k_+\!-\!1}
	)$ and $c:=\frac{\sqrt{2k_+}}{k_-} Q$.
\end{theorem}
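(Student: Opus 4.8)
The plan is to bound the two terms of the decomposition $\operatorname{Error}(p,\vec G',t)=\operatorname{Bias}(p,\vec G')+\operatorname{Variance}(p,\vec G',t)$ separately, the last display then being just the triangle inequality. The first step is to identify the limit $y^\star:=\lim_{t\to\infty}T'^t_{b+p}(x^\star(b))$. Because $Z\subseteq V'$ and $x^\star(b)$ solves $Ax^\star(b)=b$, the constraints of the affine set $S:=\{u\in\mathbb{R}^{\vec E}:Au=b+p,\ u_{\vec E'^C}=x^\star(b)_{\vec E'^C}\}$ indexed by $V'^C$ hold automatically, so $S\neq\varnothing$; moreover along the iterates started at $x^\star(b)$ the coordinates on $\vec E'^C$ stay frozen at $x^\star(b)_{\vec E'^C}$, so $T'_{b+p}$ acts on them as the Euclidean projected-gradient step for $\min_{u\in S}f(u)$. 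Since the restriction of $\nabla^2 f$ to the tangent space of $S$ still has spectrum in $[\alpha,\beta]$, the usual analysis with $\eta=1/\beta$ gives $y^\star=\arg\min_{u\in S}f(u)$ and $\|T'^t_{b+p}(x^\star(b))-y^\star\|\le e^{-t/(2Q)}\|x^\star(b)-y^\star\|$, i.e.\ $\|\operatorname{Variance}(p,\vec G',t)\|\le e^{-t/(2Q)}\|x^\star(b)-y^\star\|$.

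The conceptual core is a reduction of each of $x^\star(b)$, $x^\star(b+p)$, $y^\star$ to optimal flows on the subgraph $\vec G'$. For any external flow $\bar b$, freezing $x^\star(\bar b)$ on $\vec E'^C$ keeps it optimal on the restricted feasible set, and since $f=\sum_e f_e$ is separable this shows $x^\star(\bar b)_{\vec E'}$ is exactly the optimal flow on $\vec G'$ for the external flow $\bar b_{V'}-A_{V',\vec E'^C}x^\star(\bar b)_{\vec E'^C}$, while $y^\star_{\vec E'}$ is the optimal flow on $\vec G'$ for $(b+p)_{V'}-A_{V',\vec E'^C}x^\star(b)_{\vec E'^C}$. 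Comparing these I get: (i) $x^\star(b)_{\vec E'}$ and $y^\star_{\vec E'}$ are optimal $\vec G'$-flows whose external flows differ by $p$, supported on $Z\subseteq V'$; and (ii) $x^\star(b+p)_{\vec E'}$ and $y^\star_{\vec E'}$ are optimal $\vec G'$-flows whose external flows differ by $-A_{V',\vec E'^C}\delta$, where $\delta:=x^\star(b+p)_{\vec E'^C}-x^\star(b)_{\vec E'^C}$, a vector supported on the inner boundary $\Delta(\vec G')$.

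For the variance, $x^\star(b)$ and $y^\star$ agree on $\vec E'^C$, so $\|x^\star(b)-y^\star\|=\|x^\star(b)_{\vec E'}-y^\star_{\vec E'}\|$; applying the fundamental theorem of calculus along the segment of external flows in (i) together with Theorem~\ref{thm:Decay of correlation} on $\vec G'$ with $\vec F=\vec E'$ (hence $d(V_{\vec F},Z)=0$) bounds this by $\tfrac{c}{1-\lambda}\|p\|$, and the worst-case estimates $k_-/\beta\le d(b)_v\le k_+/\alpha$, $\sigma(b)_e\in[1/\beta,1/\alpha]$, $\lambda\le\rho$ turn the prefactor into $c=\tfrac{\sqrt{2k_+}}{k_-}Q$, giving the variance bound. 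For the bias, decomposing the $\ell_2$-norm over $\vec E'$ and $\vec E'^C$ gives $\|\operatorname{Bias}(p,\vec G')\|\le\|x^\star(b+p)_{\vec E'}-y^\star_{\vec E'}\|+\|\delta\|$. The first term is handled as in the variance step but using (ii) together with $\|A_{V',\vec E'^C}\|\le\sqrt{k_+-1}$ (each row has at most $k_+-1$ nonzeros, each column at most one), giving a bound proportional to $\tfrac{c\sqrt{k_+-1}}{1-\rho}\|\delta\|$; the second, $\|\delta\|$, is bounded by Theorem~\ref{thm:Decay of correlation} on the \emph{full} graph with $\vec F=\vec E'^C$ and perturbation $p$ on $Z$, where $d(V_{\vec E'^C},Z)=d(\Delta(\vec G'),Z)$, giving $\|\delta\|\le\tfrac{c}{1-\rho}\rho^{d(\Delta(\vec G'),Z)}\|p\|$. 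Collecting constants (using $1-\rho\le 1$) into $\gamma=c(1+c\sqrt{k_+-1})$ yields the bias bound, and the factor $\mathbf 1_{\vec G'\neq\vec G}$ reflects that $\vec E'^C=\varnothing$ forces $\delta=0$ and $\operatorname{Bias}=0$.

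I expect the main obstacle to be the passage from the weighted diffusion-walk quantities of Theorems~\ref{thm:comparisontheoremnetworkflow}--\ref{thm:Decay of correlation} to the unweighted-graph quantities $\mu,k_-,k_+,Q$ in $\rho$ and $c$: concretely, showing $\lambda=\sup_b\lambda(b)\le\rho=\tfrac{Qk_+}{k_-}-1+\tfrac{Q}{k_-}\mu$ uniformly in $b$. This calls for comparing the weighted Laplacian $L(b)$ and degree matrix $D(b)$ with the adjacency matrix $B$ and the degree matrix of $G$ via Rayleigh quotients, exploiting only $\sigma(b)_e\in[1/\beta,1/\alpha]$, and carefully removing the Perron direction so that the comparison bounds $\lambda_2(b)$ and $\lambda_n(b)$ rather than the trivial top eigenvalue $1$. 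The remaining pieces --- the reduction of paragraph two, the degree-based bookkeeping of the constants, and the two invocations of the decay-of-correlation estimate --- are routine once this spectral comparison is in hand.
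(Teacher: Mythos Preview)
Your proposal is correct and follows essentially the same route as the paper: identify the limit $y^\star$ as the optimal restricted flow, split the bias over $\vec E'$ and $\vec E'^C$, use Theorem~\ref{thm:Decay of correlation} on the full graph for the outer piece and on the subgraph $\vec G'$ for the inner piece (with the boundary perturbation $-A_{V',\vec E'^C}\delta$), and use the contraction of projected gradient descent plus Theorem~\ref{thm:Decay of correlation} on $\vec G'$ for the variance. The only minor differences are cosmetic: you interpolate linearly between $b'(1,0)$ and $b'(1,1)$ whereas the paper uses the curved path $b'(1,\theta)$ through $x^\star(b(\theta))_{\vec E'^C}$, and you bound $\|A_{V',\vec E'^C}\|\le\sqrt{k_+-1}$ via $\|M\|_2\le\sqrt{\|M\|_1\|M\|_\infty}$ whereas the paper does the same thing entrywise via Jensen.

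One point you slightly understate: the spectral comparison you flag as the ``main obstacle'' is needed not only for the full-graph walk $P(b)$ but also, and more delicately, for the \emph{subgraph} walk $P'(b')$ on $\vec G'$, since each time you invoke Theorem~\ref{thm:Decay of correlation} on $\vec G'$ the rate is governed by $\lambda'=\sup_{b'}\lambda'(b')$, not $\lambda$. Bounding $\lambda'$ by $\rho=\tfrac{Qk_+}{k_-}-1+\tfrac{Q}{k_-}\mu$ uniformly over the choice of $\vec G'$ requires, in addition to the Rayleigh-quotient comparison you describe, Cauchy's eigenvalue interlacing for the principal submatrix $B'=B_{V',V'}$ of $B$ (this is the paper's Proposition~\ref{prop:interlacing}); the Rayleigh argument alone on $\vec G'$ would only give a bound in terms of the spectrum of $B'$ and the degree extremes of $G'$, not of $G$.
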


The bound for the bias decays exponentially with respect to the graph-theoretical distance (i.e., the distance in the undirected graph $G$) between the inner boundary of $\vec G'$, i.e., $\Delta(\vec G')$, and the region where the perturbation $p$ is supported, i.e., $Z\subseteq V$. The rate is governed by the eigenvalue $\mu$, the condition number $Q$, and the maximum/minimum degree of the graph.
The bound for the variance decays exponentially with respect to the running time, with rate proportional to $1/Q$.
We highlight that the constants appearing in the bounds in Theorem \ref{thm:error localized} do not depend on the choice of the subgraph $\vec G'$ of $\vec G$, but depend only on $\mu$, $Q$, $k_+$, and $k_-$ (as the proofs in Appendix \ref{app:error localized} attest, a more refined analysis can yield better constants that do depend on the choice of $\vec G'$). In particular, the same constants apply for the analysis of the global algorithm. In this case, the bias term is zero, so that the error equals the variance component (see the indicator function in Theorem \ref{thm:error localized}).

To investigate the computational savings achieved by localized gradient descent in the case of expanders, we will use Theorem \ref{thm:error localized} to derive a bound for the bias term as in \eqref{def:bias_ansatz} and then invoke Proposition \ref{prop:Computational cost}. To this end, for the sake of simplicity, let $G$ be a $k$-regular graph, where each vertex has $k$ neighbors ($k_+=k_-=k$). Let us introduce a collection of subgraphs that are centered on a given vertex and are parametrized by their radii.
Namely, fix a vertex $v\in V$, let $V'_r:=\{w\in V: d(v,w)\le r\}$ denote the ball of radius $r>0$ around vertex $v\in V$, and let $\vec G_r':=(V_r',\vec E_r')$ be the subgraph of $\vec G$ that has vertex set $V_r'$ and induced edge set $\vec E_r'$.
Consider a perturbation vector $p\in\mathbb{R}^V$ that is supported on $Z:=V'_z$, for a fixed $z\ge 1$.
If we run the localized algorithm on $\vec G_r'$, with $r> z$, using the trivial bound $|V'_r| \le k^r$, Theorem \ref{thm:error localized} yields bound \eqref{def:bias_ansatz} with $\varphi=\| p \| \gamma \frac{1}{(1-\rho)^2\rho^z}$ and $\theta = \log(1/\rho)/\log k$. As $|E'_r|\le (k / 2)|V'_r|$, we have that the global projected gradient descent achieves error tolerance $\varepsilon$ with a cost that scales like $O(|V|^\omega + (k/2)|V|\log (1/\varepsilon))$, while its localized counterpart achieves the same accuracy with a cost that scales like $O((2\varphi/\varepsilon)^{\omega/\theta} + (k/2)(2\varphi/\varepsilon)^{1/\theta}\log (2/\varepsilon))$.

\begin{remark}[General graph topologies]
While the theoretical analysis that we present only holds for expanders, the main idea applies to any graph topology. In general, one needs to use Theorem \ref{thm:comparisontheoremnetworkflow} to establish locality results in the same spirit of Theorem \ref{thm:Decay of correlation} and Theorem \ref{thm:error localized}, possibly establishing different rates of decay (i.e., not exponential) for different graphs, as discussed in Section \ref{sec:Correlation in terms of graph Laplacians} and as we will see in Section \ref{sec:Numerical Simulations}.
\end{remark}

\subsection{Numerical Simulations}
\label{sec:Numerical Simulations}

Let us consider the quadratic problem obtained with the choice $f_e(x)=\tau_ex^2/2$ for any $e\in \vec{E}$, where each $\tau_e$ is an independent sample from the uniform distribution in $[1,2]$. 
The function $f=\sum_{e\in\vec E} f_e$ is $\alpha$-strongly convex and $\beta$-smooth with $\alpha=1$ and $\beta =2$, as the Hessian $\nabla^2 f$ is diagonal with entries $\frac{d^2 f_e(x)}{d x^2}=\tau_e$. In this case the optimal solution of the optimization problem can be computed analytically. If we define $\Sigma:=(\nabla^2 f)^{-1}$ and let $\Sigma':=\Sigma_{V',V'}$ be the submatrix indexed by $V'$, we have $x^\star(b) = \Sigma A^T (A\Sigma A^T)^{+} b$ and $x'^\star(b') = \Sigma' A'^T (A'\Sigma' A'^T)^{+} b'$. Using these expressions we can compute the bias term \eqref{def:bias}. For an arbitrary $v\in V$, we take $Z=\{w\in V : d(v,w)\le 1\}$. We draw the components of the vector $b$ and the non-zero components of the vector $p$ independently from the uniform distribution in $[-1,1]$, imposing the conditions $\mathbb{1}^Tb =0$ and $\mathbb{1}^Tp =0$ (this is done by modifying an arbitrary component of the randomly-generated vectors to impose that the sum of the components equals zero). Given $r>0$, let $\vec G'_r=(V'_r,\vec E'_r)$ be the subgraph of $\vec G$ that includes all the vertices within a distance $r$ from $v$.

We consider three graphs with $900$ nodes each: a cycle, a two-dimensional square grid with periodic boundary conditions, and a $3$-regular expander (uniformly sampled from the family of $3$-regular graphs).

Figure \ref{fig-bias} shows the behavior of the $\ell_2$-norm of the bias term as a function of the radius $r$ and the size $|V'_r|$ (i.e., number of vertices) of the subgraph $\vec G'_r$ for the three graph topologies of interest.
Each topology gives rise to a different behavior. 
For the cycle graph, the bias error stays constant while $V'_r\neq V$ and drops to zero only when $V'_r=V$. This illustrates the fact that in the cycle there is no decay of correlation as perturbations do not dissipate. Borrowing again terminology from probability theory, we can say that the cycle represents a case of ``long-range dependence." 
For the two-dimensional grid, the bias decays polynomially with $r$ while the subgraph $\vec G'_r$ does not reach the boundaries of $\vec G$ (as we conjectured, see Section \ref{sec:Correlation in terms of graph Laplacians}), and then decays at a faster rate. For the expander, the bias decays exponentially with $r$, as we proved in Theorem \ref{thm:error localized}. With respect to $|V'_r|$, the decay of the bias for both grids and expanders aligns with the polynomial ansatz in \eqref{def:bias_ansatz}. This polynomial decay for expanders was proved in Section \ref{sec:Expander graphs}.

\begin{figure}[h!]
\centering
\includegraphics[width=5.37cm]{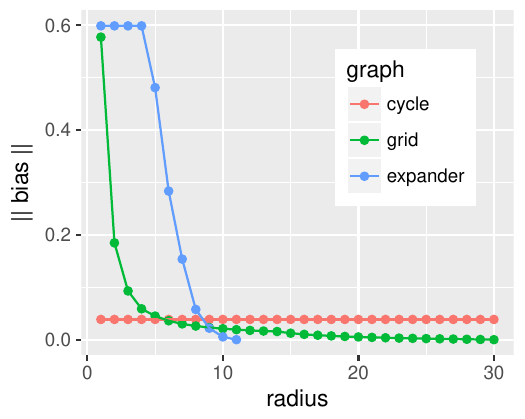}
\hspace{.5cm}
\includegraphics[width=5.37cm]{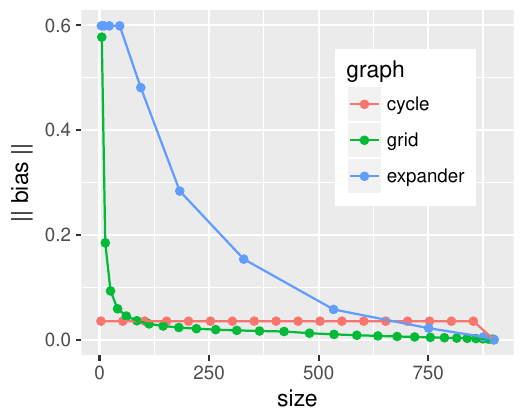}
\caption{Typical realizations of $\| \operatorname{Bias}(\vec G'_r) \|$ as a function of the radius $r$
(left) and the size $|V'_r|$ (right) 
of the subgraph $\vec G'_r=(V'_r,\vec E'_r)$. The value of the bias for the expander is zero for any $r\ge11$, as $V'_r=V$ in this case.
}
\label{fig-bias}
\end{figure}
\FloatBarrier

Figure \ref{fig-variance} shows the behavior of the variance term for the localized projected gradient descent algorithm. As prescribed by classical results, and as we proved more precisely in the case of expanders (Theorem \ref{thm:error localized}), the algorithm converges exponentially fast and the rate of convergence can be upper bounded by a quantity that does not depend on the graph topology nor on the graph size. As described in Section \ref{sec:Well-conditioned setting and projected gradient descent}, the cost incurred by the algorithm to achieve a given accuracy for the variance is seen to increase polynomially with $|V'_r|$.

\begin{figure}[h!]
\centering
\includegraphics[width=5.37cm]{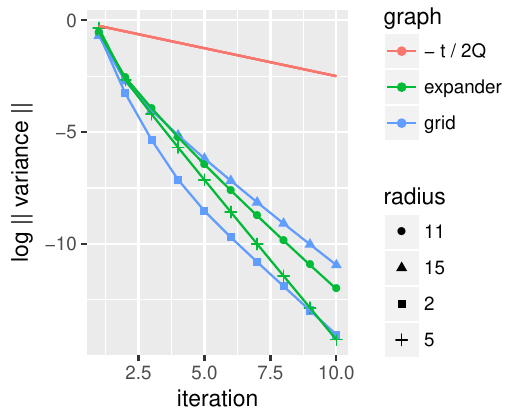}
\hspace{.5cm}
\includegraphics[width=5.37cm]{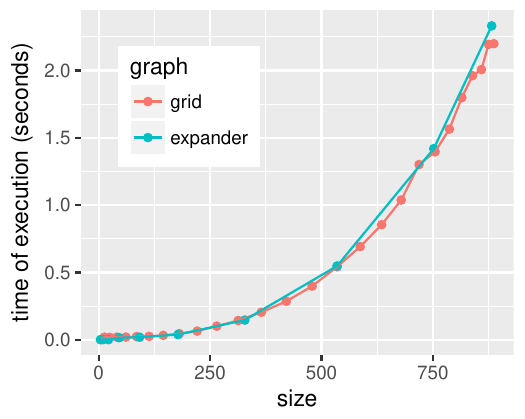}
\caption{(Left) Typical realizations of $\log\| \operatorname{Var}(\vec G'_r,t) \|$ for projected gradient descent as a function of the iteration step $t$, for different graph topologies and different choices of the radius $r$. We do not plot results for the cycle as in that case the algorithm converges in one iteration as there is a unique solution to  $Ax=b$. We also plot the theoretical upper bound $-t/2Q$, with $Q=\beta/\alpha=2$. (Right) Time of execution to run localized projected gradient descent to achieve $\| \operatorname{Var}(\vec G'_r,t) \| \le 10^{-7}$ as a function of $|V'_r|$. We use the \texttt{ginv} function from the \texttt{MASS} library in \texttt{R} to compute the pseudoinverse of the Laplacian matrix $L'$ and run the algorithm as described in \eqref{def:localizedalg}.}
\label{fig-variance}
\end{figure}

\FloatBarrier

Finally, Figure \ref{fig-biasvariance-time} compares the computational cost of global and localized algorithms to achieve the same level of error accuracy. This plot aligns with the findings of Proposition \ref{prop:Computational cost} in the case of expanders. In particular, the plot shows that the computational savings achieved by localized algorithms are considerable in the regime $1/\varepsilon \ll |V|$.

\begin{figure}[h!]
\centering
\includegraphics[width=5.37cm]{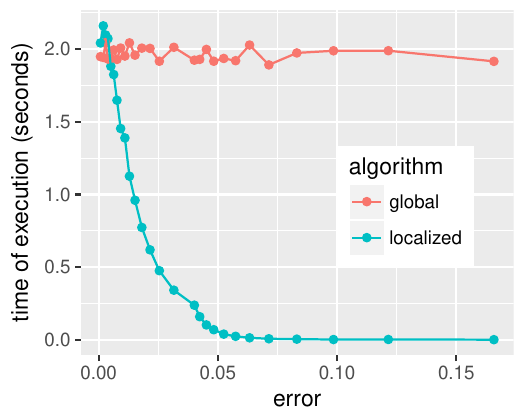}
\hspace{.5cm}
\includegraphics[width=5.37cm]{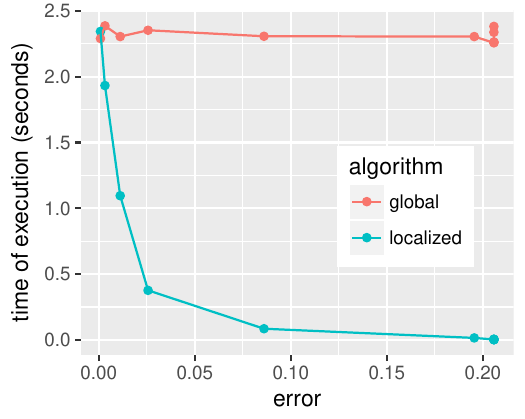}
\caption{Time of execution to run projected gradient descent (gobal and localized) to achieve $\| \operatorname{Error}(\vec G'_r,t) \| \le \varepsilon + 10^{-7}$ as a function of the error tolerance parameter $\varepsilon$, for the grid (left) and the expander (right). As in the well-conditioned setting that we examine the algorithm drives the variance error to zero exponentially fast (in terms of number of iterations, see Figure \ref{fig-variance}), the error is dominated by the bias term. As a consequence, for the localized algorithm we adopt the strategy to choose the smallest radius $r$ such that $\| \operatorname{Bias}(\vec G'_r) \| \le \varepsilon$ and then run the localized algorithms on $\vec G'_r$ for the smallest number of iterations $t$ such that $\| \operatorname{Var}(\vec G'_r,t) \| \le 10^{-7}$.}
\label{fig-biasvariance-time}
\end{figure}


\section{Conclusions}\label{sec:Conclusions}
The main contribution of this paper is to derive a general analogy between natural concepts in probability and statistics (i.e., notions of correlation among random variables, decay of correlation, and bias-variance decomposition and trade-off) and similar notions that can be introduced in optimization. In this paper we have proposed notions of correlation that are based on the sensitivity of optimal points. We have illustrated how decay of correlation (locality) can be established in a canonical network optimization problem (min-cost network flow), and how it can be used to design local algorithms to compute the solution of a problem after localized perturbations are made to the system. In principle, the framework that we propose can be applied to \emph{any} optimization problem. The key point is to establish decay of correlation for the problem at hand, deriving results that are analogous to the ones established in Section \ref{sec:Decay of correlation for expanders}.
In the case of the min-cost network flow problem, we showed that establishing locality reduces to bounding the discrete derivative of the Green's function of the diffusion random walk, as described in Theorem \ref{thm:comparisontheoremnetworkflow}. We proved an exponential decay for the correlation in expander graphs (as a function of the distance from the perturbation), and we conjectured and provided numerical evidence for a polynomial decay in grid-like topologies. 
Once results on locality are established for the particular problem at hand, these results translate into a bound for the bias term of the error decomposition of localized algorithms, as the one that we give in Section \ref{sec:Expander graphs}. The analysis of the variance term, on the other hand, depends on the algorithm that one wants to localize. This part is not technically difficult, as it amounts to analyzing the performance of the chosen algorithm when applied to a subgraph with frozen boundary conditions.
Establishing locality in more general settings and problems, and extending the ideas here presented to \emph{cold-start} scenarios (where one wants to compute the optimal solution of an optimization problem starting from possibly any initial condition) remain open questions for future investigation.

\bibliographystyle{amsplain}
\bibliography{bib}

\appendix


\section{Hadamard's global inverse theorem}
\label{sec:Hadamard global inverse theorem}

We prove Theorem \ref{thm:comparisontheorem}. The proof relies on Hadamard's global inverse function theorem, which characterizes when a $C^k$ function is a $C^k$ diffeomorphism.
Recall that a function from $\mathbb{R}^m$ to $\mathbb{R}^m$ is said to be \emph{$C^k$} if it has continuous derivatives up to order $k$. A function is said to be a \emph{$C^k$ diffeomorphism} if it is $C^k$, bijective, and its inverse is also $C^k$.

The following important result characterizes when a $C^k$ function is a $C^k$ diffeomorphism.

\begin{theorem}[Hadamard's global inverse function theorem]
\label{thm:Hadamard global inverse theorem}
Let $\Psi$ be a $C^k$ function from $\mathbb{R}^m$ to $\mathbb{R}^m$. Then, $f$ is a $C^k$ diffeomorphism if and only if the following two conditions hold:
\begin{enumerate}
\item The determinant of the differential of $\Psi$ is different from zero at any point, namely,
$|\frac{d}{dz}\Psi (z)| \neq 0$ for any $z\in\mathbb{R}^m$.
\item The function $\Psi$ is \emph{norm coercive}, namely, for any sequence of points $z_1,z_2,\ldots \in \mathbb{R}^m$ with $\| z_k \|\rightarrow\infty$ it holds $\| \Psi(z_k) \|\rightarrow\infty$ (for any choice of the norm $\| \cdot \|$, as norms are equivalent in finite dimension).
\end{enumerate}
\end{theorem}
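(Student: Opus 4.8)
Since the statement is a biconditional, the plan is to treat the two implications separately and to reduce everything to the classical \emph{local} inverse function theorem plus standard point-set topology. The necessity of the two conditions is the easy direction, and I would dispatch it first: if $\Psi$ is a $C^k$ diffeomorphism with inverse $\Psi^{-1}$, then differentiating $\Psi^{-1}\circ\Psi=\mathrm{id}$ and applying the chain rule shows that the differential of $\Psi$ is invertible at every point, which is condition 1; and if $\|z_k\|\to\infty$ while some subsequence of $\Psi(z_k)$ converges to $y$, then continuity of $\Psi^{-1}$ forces $z_k\to\Psi^{-1}(y)$ along that subsequence, a contradiction, so $\Psi$ is norm coercive, which is condition 2.

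For the substantive direction, assume conditions 1 and 2. First I would use condition 1 together with the local inverse function theorem to conclude that $\Psi$ is a local $C^k$ diffeomorphism at every point; in particular $\Psi$ is an open map, so $\Psi(\mathbb{R}^m)$ is open. Next I would extract from condition 2 that $\Psi$ is a \emph{proper} map: for compact $K$, the set $\Psi^{-1}(K)$ is closed by continuity and bounded by norm coercivity (an unbounded sequence in it would map to a bounded sequence, contradicting coercivity), hence compact. Since a proper continuous map into a locally compact Hausdorff space is closed, $\Psi(\mathbb{R}^m)$ is also closed; being nonempty, open, and closed in the connected space $\mathbb{R}^m$, it is all of $\mathbb{R}^m$, so $\Psi$ is onto.

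The remaining and most delicate point is injectivity, i.e.\ upgrading ``proper local homeomorphism onto $\mathbb{R}^m$'' to ``homeomorphism''. The plan is to show that a proper local homeomorphism onto a connected, locally path-connected space is a covering map: each fiber $\Psi^{-1}(y)$ is discrete (local injectivity) and compact (properness), hence a finite set $\{z_1,\dots,z_n\}$; picking pairwise disjoint neighborhoods $U_i\ni z_i$ on which $\Psi$ restricts to a homeomorphism and then, using properness, shrinking a connected neighborhood $W$ of $y$ so that $\Psi^{-1}(W)\subseteq\bigcup_i U_i$, one verifies that $W$ is evenly covered. Since the domain $\mathbb{R}^m$ is connected and the base $\mathbb{R}^m$ is simply connected, this covering must be single-sheeted, hence $\Psi$ is bijective; combined with being a local $C^k$ diffeomorphism, $\Psi$ is a global $C^k$ diffeomorphism, its inverse being $C^k$ because it is so locally.

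The main obstacle is exactly the passage from ``proper $+$ local homeomorphism'' to ``covering map'': this is the compactness argument on the finite fiber above, and it is the only place where condition 2 is used in an essential, non-local way. If one prefers to avoid covering-space language, an alternative is a direct continuation argument: fix a base point $z_0$, and for an arbitrary target $y$ lift the straight segment from $\Psi(z_0)$ to $y$ through the local inverses of $\Psi$; the set of times for which the lift exists is open by local invertibility and closed because properness prevents the lift from escaping to infinity, so the whole segment lifts, re-deriving surjectivity, while a monodromy argument using simple connectedness of $\mathbb{R}^m$ shows the lifted endpoint is independent of the path and hence that $\Psi$ is injective. Either route reduces the theorem to standard facts, which is consistent with the paper's using it as a cited tool.
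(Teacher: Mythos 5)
The paper does not actually prove this statement: Theorem \ref{thm:Hadamard global inverse theorem} is quoted as a known result, and the ``proof'' in Appendix \ref{sec:Hadamard global inverse theorem} is just a pointer to the literature (the corollary of Lemma 2 in \cite{WD72}, and \cite{krantz2002implicit}). So there is no in-paper argument to compare against; what you have written is a genuine proof where the paper supplies a citation. Your argument is the standard one and, as an outline, it is correct and complete: the necessity direction is routine; for sufficiency, condition 1 plus the local inverse function theorem makes $\Psi$ an open local $C^k$ diffeomorphism, condition 2 makes it proper, properness plus openness plus connectedness of $\mathbb{R}^m$ gives surjectivity, and the real content is the upgrade ``proper local homeomorphism onto a connected, locally path-connected space $\Rightarrow$ covering map,'' after which simple connectedness of the base and connectedness of the total space force a single sheet and hence injectivity. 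The only step a referee would ask you to write out is the even-covering verification: besides arranging $\Psi^{-1}(W)\subseteq\bigcup_i U_i$ via properness (a short compactness argument on a sequence $x_j\in\Psi^{-1}(y_j)\setminus\bigcup_i U_i$ with $y_j\to y$), you must also shrink $W$ into $\bigcap_i \Psi(U_i)$ so that each $\Psi^{-1}(W)\cap U_i$ maps homeomorphically onto $W$; this is standard but should be stated. Your alternative path-lifting/monodromy route is likewise sound and is essentially the argument in the cited source. Correctly identifying the fiber-finiteness/covering step as the only place where norm coercivity is used non-locally is the right diagnosis of where the theorem's difficulty lives.
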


\begin{proof}
See \cite{WD72}[Corollary of Lemma 2], for instance, or also \cite{krantz2002implicit} for a more general form of this inverse function to study diffeomorphisms on manifolds.
\end{proof}

The following corollary is the backbone behind Theorem \ref{thm:comparisontheorem}.

\begin{lemma}[Diffeomorphism for Lagrangian multipliers map]
\label{lem:Hadamard global inverse theorem}
Let $f:\mathbb{R}^n\rightarrow \mathbb{R}$ be a strongly convex function, twice continuously differentiable.
Let $A\in\mathbb{R}^{m\times n}$ be a given matrix. Define the function $\Phi$ from $\mathbb{R}^n\times\mathbb{R}^m$ to $\mathbb{R}^n\times\mathbb{R}^m$ as
$$
	\Phi
	(
	x,
	\nu 
	)
	:=
	\left( 
	\begin{array}{c}
	\nabla f(x) + A^T\nu \\
	A x 
	\end{array} \right),
$$
for any $x\in\mathbb{R}^n$, $\nu\in\mathbb{R}^m$.
Then, the restriction of the function $\Phi$ to $\mathbb{R}^n\times \operatorname{Im}(A)$ is a $C^1$ diffeomorphism.
\end{lemma}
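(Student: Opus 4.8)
The plan is to verify the two hypotheses of Hadamard's global inverse function theorem (Theorem~\ref{thm:Hadamard global inverse theorem}) for the map $\Phi$, restricted appropriately. The first subtlety is that $\Phi$ is not a diffeomorphism on all of $\mathbb{R}^n\times\mathbb{R}^m$ when $A$ is not full row rank: if $\nu_0\in\operatorname{Ker}(A^T)$ is nonzero, then $\Phi(x,\nu)=\Phi(x,\nu+\nu_0)$, so injectivity fails. This is exactly why the statement restricts the codomain/domain to $\mathbb{R}^n\times\operatorname{Im}(A)$. So first I would set up the right ambient space: note that $A^T$ maps $\operatorname{Im}(A)$ bijectively onto $\operatorname{Im}(A^T)$ (since $\operatorname{Ker}(A^T)\cap\operatorname{Im}(A)=\{0\}$ and dimensions match), so the first component of $\Phi$ takes values that, modulo the gradient term, live in $\operatorname{Im}(A^T)$; meanwhile $Ax$ always lies in $\operatorname{Im}(A)$. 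One checks that $\Phi$ restricted to $\mathbb{R}^n\times\operatorname{Im}(A)$ is a $C^1$ map into $\mathbb{R}^n\times\operatorname{Im}(A)$ (the second component lands in $\operatorname{Im}(A)$ automatically; for the first component, observe $\operatorname{Im}(A)$ and $\operatorname{Im}(A^T)$ have the same dimension, and after identifying the domain and codomain of this restriction with a common Euclidean space of dimension $n+\operatorname{rank}(A)$ via orthonormal bases, $\Phi$ becomes a $C^1$ self-map to which Theorem~\ref{thm:Hadamard global inverse theorem} applies).

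Next, I would verify condition~(1), nonvanishing of the Jacobian. The differential of $\Phi$ at $(x,\nu)$ is the block matrix $\begin{pmatrix}\nabla^2 f(x) & A^T\\ A & 0\end{pmatrix}$; restricted to the subspace $\mathbb{R}^n\times\operatorname{Im}(A)$, I need to show this linear map is injective on that subspace. Suppose $(\delta x,\delta\nu)$ with $\delta\nu\in\operatorname{Im}(A)$ is in the kernel: then $\nabla^2 f(x)\,\delta x + A^T\delta\nu = 0$ and $A\,\delta x = 0$. Left-multiplying the first equation by $\delta x^T$ and using $A\,\delta x=0$ gives $\delta x^T\nabla^2 f(x)\,\delta x = 0$; by strong convexity $\nabla^2 f(x)\succ 0$, so $\delta x=0$. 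Then $A^T\delta\nu=0$, i.e.\ $\delta\nu\in\operatorname{Ker}(A^T)$; but $\delta\nu\in\operatorname{Im}(A)$ and $\operatorname{Ker}(A^T)\cap\operatorname{Im}(A)=\{0\}$, so $\delta\nu=0$. Hence the restricted differential is injective, thus invertible on the subspace, so its determinant (in coordinates) is nonzero everywhere.

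For condition~(2), norm coercivity, I would argue by contradiction or directly: suppose $\|(x_k,\nu_k)\|\to\infty$ with $\nu_k\in\operatorname{Im}(A)$; I must show $\|\Phi(x_k,\nu_k)\|\to\infty$. Strong convexity gives a lower bound $\|\nabla f(x)\|\ge \alpha\|x-x_0\| - \|\nabla f(x_0)\|$ for a reference point $x_0$ (strongly monotone gradient), so if $\|x_k\|\to\infty$ then $\|\nabla f(x_k)\|\to\infty$; combined with the second component $Ax_k$, a careful splitting of $x_k$ into its components along $\operatorname{Ker}(A)$ and $\operatorname{Im}(A^T)$ shows $\|\Phi(x_k,\nu_k)\|\to\infty$ in this case. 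If instead $\|x_k\|$ stays bounded but $\|\nu_k\|\to\infty$, then since $\nu_k\in\operatorname{Im}(A)$ and $A^T$ is injective on $\operatorname{Im}(A)$ with a strictly positive smallest singular value $s>0$ on that subspace, $\|A^T\nu_k\|\ge s\|\nu_k\|\to\infty$, while $\nabla f(x_k)$ stays bounded, so the first component of $\Phi$ blows up. In all cases coercivity holds. Applying Theorem~\ref{thm:Hadamard global inverse theorem} then yields that the restriction of $\Phi$ to $\mathbb{R}^n\times\operatorname{Im}(A)$ is a $C^1$ diffeomorphism.

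The main obstacle I anticipate is purely bookkeeping rather than conceptual: carefully justifying that the restricted map is a self-map of a well-defined Euclidean space (identifying $\mathbb{R}^n\times\operatorname{Im}(A)$ in domain and codomain consistently) so that Hadamard's theorem applies verbatim, and handling the coercivity argument cleanly by decomposing $x_k$ along $\operatorname{Ker}(A)\oplus\operatorname{Im}(A^T)$. The strong-convexity inputs (positive definite Hessian, coercive gradient) do all the real work; the rank-deficiency of $A$ is handled entirely by restricting to $\operatorname{Im}(A)$ and repeatedly using $\operatorname{Ker}(A^T)\cap\operatorname{Im}(A)=\{0\}$.
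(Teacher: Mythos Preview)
Your proposal is correct and follows essentially the same route as the paper: reduce to a self-map of $\mathbb{R}^n\times\operatorname{Im}(A)$ (the paper does this explicitly via an orthonormal basis of $\operatorname{Im}(A)$, yielding a full-row-rank matrix $B$, whereas you work abstractly with the subspace), then verify the two hypotheses of Hadamard's theorem with the same case analysis for coercivity. Your kernel argument for the nonvanishing Jacobian is a clean variant of the paper's Schur-complement determinant computation; for coercivity in the sub-case where $\|x_k\|\to\infty$ along $\operatorname{Ker}(A)$, be aware that the relevant quantity is the $\operatorname{Ker}(A)$-component of $\nabla f(x_k)$ (which is orthogonal to $A^T\nu_k$), not $\|\nabla f(x_k)\|$ itself---the paper's inequality $\|P_\perp\nabla f(x)\|\ge \alpha\|P_\perp x\|-\|P_\perp\nabla f(P_\parallel x)\|$ is exactly the ``careful splitting'' you allude to.
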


\begin{proof}
Let us interpret $\Phi$ as the representation of a transformation $\mathcal{T}$ in the standard basis of $\mathbb{R}^n\times \mathbb{R}^m$. Recall the orthogonal decomposition $\mathbb{R}^m = \operatorname{Im}(A) \oplus \operatorname{Ker}(A^T)$. Let the vectors $u_1,\ldots,u_r\in\mathbb{R}^m$ form an orthogonal basis for $\operatorname{Im}(A)$, where $r$ is the rank of $A$, and let the vectors $v_1,\ldots,v_{m-r}\in\mathbb{R}^m$ form an orthogonal basis for $\operatorname{Ker}(A^T)$. Define the orthogonal matrix $Z=[u_1,\ldots,u_r,z_1,\ldots,z_{m-r}]$, which represents a change of basis in $\mathbb{R}^m$. As we have
$$
	\Phi
	(
	x,
	\nu 
	)
	=
	\left( 
	\begin{array}{c}
	\nabla f(x) + A^T Z Z^T \nu \\
	Z Z^T A x 
	\end{array} \right),
$$
then the transformation $\mathcal{T}$ is represented in the standard basis for $\mathbb{R}^n$ and in the basis $Z$ for $\mathbb{R}^m$ by the following map
$$
	\widetilde\Phi
	(
	x,
	\tilde\nu 
	)
	:=
	\left( 
	\begin{array}{c}
	\nabla f(x) + \widetilde A^T \tilde\nu \\
	\widetilde A x 
	\end{array} \right),
$$
where $\widetilde A := Z^T A$. In fact,
$$
	\widetilde\Phi(x,Z^T\nu) = 
	\left( \begin{array}{cc}
	I & \mathbb{O} \\
	\mathbb{O}^T & Z^T 
	\end{array} \right)
	\Phi(x,\nu),
$$
where $I\in \mathbb{R}^{n\times n}$ is the identity matrix, and $\mathbb{O}\in \mathbb{R}^{n\times m}$ is the all-zero matrix. As
$
	A^TZ
	= [A^Tu_1,\ldots,A^Tu_r,\mathbb{O}_{n\times (m-r)}],
$
$$
	\widetilde A = (A^TZ)^T 
	= \left[
	\begin{array}{c}
	B\\
	\mathbb{O}_{(m-r)\times n}
	\end{array} \right],
$$
where $B:=[u_1,\ldots,u_r]^T A\in\mathbb{R}^{r\times n}$. Therefore, the restriction of the transformation $\mathcal{T}$ to the invariant subspace $\mathbb{R}^n\times\operatorname{Im}(A)$ is represented in the standard basis for $\mathbb{R}^n$ and in the basis $\{u_1,\ldots,u_r\}$ for $\operatorname{Im}(A)$ by the following map
$$
	\Psi
	(
	x,
	\xi 
	)
	:=
	\left( 
	\begin{array}{c}
	\nabla f(x) + B^T \xi \\
	B x 
	\end{array} \right).
$$
As the function $f$ is twice continuously differentiable, clearly the function $\Psi$ is continuously differentiable, i.e., $C^1$. We check that the two conditions of Theorem \ref{thm:Hadamard global inverse theorem} are met.

The differential of $\Psi$ evaluated at $(x,\xi)\in\mathbb{R}^n\times\mathbb{R}^r$ is
$$
	J(x,\xi)
	:= 
	\left( \begin{array}{cc}
	\nabla^2 f(x) & B^T \\
	B & \mathbb{O} 
	\end{array} \right).
$$
As $f$ is strongly convex, $\nabla^2 f(x)$ is positive definite so invertible. Then, the determinant of the Jacobian can be expressed as $|J(x,\xi)|= |\nabla^2 f(x)||-B\nabla^2 f(x)^{-1}B^T|$. As $B$ has full row rank by definition, $B\nabla^2 f(x)^{-1}B^T$ is positive definite and we clearly have $|J(x,\xi)| \neq 0$.


To prove that the function $\Psi$ is norm coercive, let us choose $\| \cdot \|$ to be the Euclidean norm and consider a sequence $(x_1,\xi_1),(x_2,\xi_2),\ldots \in \mathbb{R}^n\times\mathbb{R}^r$ with $\| (x_k,\xi_k) \|\rightarrow\infty$. As for any $x\in\mathbb{R}^{n}, \xi\in\mathbb{R}^{r}$ we have $\| (x,\xi) \|^2 = \|x\|^2 + \|\xi\|^2$,
clearly for the sequence to go to infinity one of the following two cases must happen:
\begin{enumerate}[(a)]
\item $\|x_k\|\rightarrow\infty$;
\item $\|x_k\|\le c$ for some $c< \infty$, $\|\xi_k\|\rightarrow\infty$.
\end{enumerate}
Before we consider these two cases separately, let us note that, for any $x\in\mathbb{R}^{n}, \xi\in\mathbb{R}^{r}$,
\begin{align}
	\| \Psi(x,\xi) \|^2 = \| \nabla f(x) + B^T\xi \|^2 + \| Bx \|^2.
	\label{bound}
\end{align}
Let $\alpha > 0$ be the strong convexity parameter, and recall the following definition of strong convexity, for any $x,y\in\mathbb{R}^{n}$,
\begin{align}
	(\nabla f(x)-\nabla f(y))^T(x-y) \ge \alpha \| x-y \|^2.
	\label{strong convexity}
\end{align}

\begin{enumerate}[(a)]
\item
Assume $\|x_k\|\rightarrow\infty$.
Let $P_{\parallel}$ be the projection operator on $\operatorname{Im}(B^T)$, i.e., $P_{\parallel}:=B^T(BB^T)^{-1}B$, and let $P_{\perp}=I-P_{\parallel}$ be the projection operator on $\operatorname{Ker}(B)$, the orthogonal complement of $\operatorname{Im}(B^T)$. As for any $x\in\mathbb{R}^n$ we have the decomposition $x=P_{\parallel}x + P_{\perp}x$ with $(P_{\parallel}x)^TP_{\perp}x =0$, clearly $\|x\|^2=\|P_\parallel x\|^2+\|P_\perp x\|^2$. So, the condition $\|x_k\|\rightarrow\infty$ holds only if one of the two cases happens:
\begin{enumerate}[(i)]
\item $\|P_\parallel x_k\|\rightarrow\infty$;
\item $\|P_\parallel x_k\|\le c$ for some $c< \infty$, $\|P_\perp x_k\|\rightarrow\infty$.
\end{enumerate}

Consider (i) first. Let $x\in\mathbb{R}^n$ so that $P_\parallel x \neq \mathbb{O}$. As $BP_\perp = \mathbb{O}$, from \eqref{bound} we have, for any $\xi\in\mathbb{R}^{r}$, $\| \Psi(x,\xi) \|^2 \ge \| Bx \|^2 = \| B P_\parallel x \|^2$, and
$
	\min_{y\in\mathbb{R}^{n} : y\in \operatorname{Im}(B^T), y\neq \mathbb{O}} \frac{y^TB^TBy}{\|y\|^2} \|P_\parallel x\|^2
	= \lambda \|P_\parallel x\|^2
$
is a lower bound, where $\lambda$ is the minimum eigenvalue of $B^TB$ among those corresponding to the eigenvectors spanning the subspace $\operatorname{Im}(B^T)$. Clearly, if $\lambda\neq 0$ (notice $\lambda \ge 0$ by definition) then the above yields that $\| \Psi(x_k,\xi_k) \| \rightarrow \infty$ whenever $\|P_\parallel x_k\|\rightarrow\infty$. To prove this, assume by contradiction that $\lambda=0$. Then, there exists $y\in\mathbb{R}^{n}$ satisfying $y\in \operatorname{Im}(B^T), y\neq \mathbb{O}$, such that $B^TBy=\lambda y= \mathbb{O}$. As $B^T$ has full column rank, the latter is equivalent to $By=\mathbb{O}$ so that $P_\perp y = y\neq \mathbb{O}$, which contradicts the hypothesis that $y\in \operatorname{Im}(B^T)$.

Consider now the case (ii). Decomposing the gradient on $\operatorname{Im}(B^T)$ and its orthogonal subspace, from \eqref{bound} we have, for any $x\in\mathbb{R}^{n}, \xi\in\mathbb{R}^{r}$,
$
	\| \Psi(x,\xi) \|^2 \ge \| P_\perp\nabla f(x) + P_\parallel\nabla f(x) + B^T\xi \|^2
	= \| P_\perp\nabla f(x) \|^2 + \| P_\parallel\nabla f(x) + B^T\xi \|^2,
$
so that $\| \Psi(x,\xi) \| \ge \| P_\perp\nabla f(x) \|$. 
Choosing $y=P_\parallel x$ in \eqref{strong convexity} we have
$$
	(P_\perp\nabla f(x)-P_\perp\nabla f(P_\parallel x))^TP_\perp x=
	(\nabla f(x)-\nabla f(P_\parallel x))^TP_\perp x \ge \alpha \| P_\perp x \|^2,
$$
and applying Cauchy-Schwarz we get, for any $x$ such that $P_\perp x \neq \mathbb{O}$,
$
	\|P_\perp\nabla f(x)\|
	\ge \alpha \| P_\perp x \| - \| P_\perp\nabla f(P_\parallel x) \|.
$
By assumption $f$ is twice continuously differentiable, so $\nabla f$ is continuous and it stays bounded on a bounded domain. Hence, we can conclude that $\| \Psi(x_k,\xi_k) \| \rightarrow \infty$ if $\|P_\perp x_k\|\rightarrow\infty$ with $(P_\parallel x_k)_{k\ge 1}$ bounded.

%
%

\item Assume $\|\xi_k\|\rightarrow\infty$ and $(x_k)_{k\ge 1}$ bounded. For any $\xi\in\mathbb{R}^{r}$, $\xi \neq \mathbb{O}$, we have
$$
	\| B^T\xi \|^2 = \frac{\xi^TBB^T\xi}{\|\xi\|^2} \|\xi\|^2
	\ge \min_{y\in\mathbb{R}^{r} : y\neq \mathbb{O}} \frac{y^TBB^Ty}{\|y\|^2} \|\xi\|^2
	= \lambda_{\text{min}} \|\xi\|^2,
$$
where $\lambda_{\text{min}}$ is the minimum eigenvalue of $BB^T$, which is strictly positive as $BB^T$ is positive definite by the assumption that $B$ has full row rank. From \eqref{bound} we have
$
	\| \Psi(x,\xi) \|
	\ge \| \nabla f(x) + B^T\xi \|
	\ge \| B^T\xi \| - \| \nabla f(x) \|
	\ge \sqrt{\lambda_{\text{min}}} \|\xi\| - \| \nabla f(x) \|,
$
that, by continuity of $\nabla f$, shows that $\| \Psi(x_k,\xi_k) \| \rightarrow \infty$ if $\|\xi_k\|\rightarrow\infty$ and $(x_k)_{k\ge 1}$ is bounded.
\end{enumerate}
\end{proof}

We now present the proof of Theorem \ref{thm:comparisontheorem}.
\begin{proof}[Proof of Theorem \ref{thm:comparisontheorem}]
The Lagrangian of the optimization problem is the function $\mathcal{L}$ from $\mathbb{R}^\mathcal{V}\times\mathbb{R}^\mathcal{F}$ to $\mathbb{R}$ defined as
$
	\mathcal{L}(x,\nu) := f(x) + \sum_{a\in \mathcal{F}} \nu_a (A^T_a x - b_a),
$
where $A^T_a$ is the $a$-th row of the matrix $A$ and $\nu = (\nu_a)_{a\in\mathcal{F}}$ is the vector formed by the Lagrangian multipliers.
Let us define the function $\Phi$ from $\mathbb{R}^\mathcal{V}\times\mathbb{R}^\mathcal{F}$ to $\mathbb{R}^\mathcal{V}\times\mathbb{R}^\mathcal{F}$ as
$$
	\Phi
	( 
	x,
	\nu 
	)
	:=
	\left( 
	\begin{array}{c}
	\nabla_x \mathcal{L}(x,\nu) \\
	A x 
	\end{array} \right)
	=
	\left( 
	\begin{array}{c}
	\nabla f(x) + A^T\nu \\
	A x 
	\end{array} \right).
$$
For any fixed $\varepsilon\in\mathbb{R}$, as the constraints are linear, the Lagrange multiplier theorem says that for the unique minimizer $x^\star(b(\varepsilon))$ there exists $\nu'(b(\varepsilon))\in\mathbb{R}^\mathcal{F}$ so that
\begin{align}
	\Phi
	( 
	x^\star(b(\varepsilon)),
	\nu'(b(\varepsilon))
	)
	=
	\left( 
	\begin{array}{c}
	\mathbb{0} \\
	b(\varepsilon)
	\end{array} \right).
	\label{diff}
\end{align}
As $A^T(\nu+\mu) = A^T\nu$ for each $\mu\in\operatorname{Ker}(A^T)$, the set of Lagrangian multipliers $\nu'(b(\varepsilon))\in\mathbb{R}^\mathcal{F}$ that satisfies \eqref{diff} is a translation of the null space of $A^T$. We denote the unique translation vector by $\nu^\star(b(\varepsilon))\in\operatorname{Im}(A)$.
By Hadamard's global inverse function theorem, as shown in Lemma \ref{lem:Hadamard global inverse theorem}, the restriction of the function $\Phi$ to $\mathbb{R}^\mathcal{V}\times \operatorname{Im}(A)$ is a $C^1$ diffeomorphism, namely, it is continuously differentiable, bijective, and its inverse is also continuously differentiable.
In particular, this means that the functions $x^\star:b\in\operatorname{Im}(A) \rightarrow x^\star(b)\in\mathbb{R}^\mathcal{V}$ and $\nu^\star:b\in\operatorname{Im}(A)\rightarrow \nu^\star(b)\in\operatorname{Im}(A)$ are continuously differentiable along the subspace $\operatorname{Im}(A)$. Differentiating both sides of \eqref{diff} with respect to $\varepsilon$, we get
$$
	\left( \begin{array}{cc}
	H & A^T \\
	A & \mathbb{0} 
	\end{array} \right)
	\left( \begin{array}{c}
	x'\\
	\tilde \nu
	\end{array} \right)
	=
	\left( \begin{array}{c}
	\mathbb{0}\\
	\frac{d b(\varepsilon)}{d \varepsilon}
	\end{array} \right),
$$
where $H:=\nabla^2 f(x^\star (b(\varepsilon)))$, $x' := \frac{d x^\star(b(\varepsilon))}{d \varepsilon}$, $\tilde \nu := \frac{d \nu^\star(b(\varepsilon))}{d \varepsilon}$.
As the function $f$ is strongly convex, the Hessian $\nabla^2 f(x)$ is positive definite for every $x\in\mathbb{R}^\mathcal{V}$, hence it is invertible for every $x\in\mathbb{R}^\mathcal{V}$. Solving the linear system for $x'$ first, from the first equation $Hx'+ A^T\tilde\nu =\mathbb{0}$ we get $x' = - H^{-1} A^T \tilde\nu$. Substituting this expression in the second equation $Ax' = \frac{d b(\varepsilon)}{d \varepsilon}$, we get $L\tilde \nu = - \frac{d b(\varepsilon)}{d \varepsilon}$, where $L:=AH^{-1}A^T$.
The set of solutions to $L\tilde \nu = - \frac{d b(\varepsilon)}{d \varepsilon}$ can be expressed in terms of the pseudoinverse of $L$ as follows 
$
	\{\tilde \nu\in\mathbb{R}^\mathcal{F} : L\tilde \nu = - \frac{d b(\varepsilon)}{d \varepsilon}\}
	=
	-L^+\frac{d b(\varepsilon)}{d \varepsilon} + \operatorname{Ker}(L).
$
We show that $\operatorname{Ker}(L) = \operatorname{Ker}(A^T)$. We show that $L\nu =\mathbb{0}$ implies $A^T\nu =\mathbb{0}$, as the opposite direction trivially holds. In fact, let $A' := A\sqrt{H^{-1}}$, where $\sqrt{H^{-1}}$ is the positive definite matrix that satisfies $\sqrt{H^{-1}} \sqrt{H^{-1}} = H^{-1}$. The condition $L\nu = A' A'^T \nu = \mathbb{0}$ is equivalent to $A'^T \nu\in\operatorname{Ker}(A')$. At the same time, clearly, $A'^T \nu\in\operatorname{Im}(A'^T)$. However, $\operatorname{Ker}(A')$ is orthogonal to $\operatorname{Im}(A'^T)$, so it must be $A'^T\nu =\mathbb{0}$ which implies $A^T\nu =\mathbb{0}$ as $\sqrt{H^{-1}}$ is positive definite.
As $\operatorname{Im}(L^+) = \operatorname{Ker}(L)^\perp = \operatorname{Ker}(A^T)^\perp = \operatorname{Im}(A)$, we have that $\tilde\nu = -L^+\frac{d b(\varepsilon)}{d \varepsilon}$ is the unique solution to $L\tilde \nu = - \frac{d b(\varepsilon)}{d \varepsilon}$ that belongs to $\operatorname{Im}(A)$. 
Substituting this expression into $x' = - H^{-1} A^T \tilde\nu$, we finally get
$
	x' = H^{-1} A^T L^+ \frac{d b(\varepsilon)}{d \varepsilon}.
$
The proof follows as $\Sigma(b)=H^{-1}$.
\end{proof}


\section{Correlation}\label{app:Correlation}

In this section we provide the proofs of Proposition \ref{prop:condsgaussian} and Lemma \ref{lem:Gaussian} in Section \ref{sec:Notions of correlation in optimization}.

\begin{proof}[Proof of Proposition \ref{prop:condsgaussian}]
Let $\mathcal{I}$ be a finite set, and let $Z\in\mathbb{R}^\mathcal{I}$ be a Gaussian random vector with mean $\rho\in\mathbb{R}^{\mathcal{I}}$ and covariance $\Upsilon\in\mathbb{R}^{\mathcal{I}\times\mathcal{I}}$, not necessarily invertible. Let $\mathcal{V}\subseteq\mathcal{I}$ be not empty, and $\mathcal{F}:=\mathcal{I}\setminus \mathcal{V}$ not empty. Recall that if $z_\mathcal{F} \in \operatorname{Im}(\Upsilon_{\mathcal{F},\mathcal{F}})$, where $\Upsilon_{\mathcal{F},\mathcal{F}}$ is the submatrix of $\Upsilon$ indexed by the rows and columns referring to $\mathcal{F}$, then the conditional mean of $Z_\mathcal{V}$ given $Z_\mathcal{F}=z_\mathcal{F}$ is given by (see \cite{opac-b1082769}[Theorem 9.2.1], for instance):
$
	\mathbf{E}[Z_\mathcal{V} | Z_\mathcal{F}=z_\mathcal{F}] = \rho_\mathcal{V} + \Upsilon_{\mathcal{V},\mathcal{F}} (\Upsilon_{\mathcal{F},\mathcal{F}})^{+} (z_\mathcal{F} - \rho_\mathcal{F}).
$
The statement of the proposition follows if we consider the Gaussian vector $Z:=(X,AX)$ with $Z_\mathcal{V}:=X$, $Z_\mathcal{F}:=AX$, upon noticing that $\rho_{\mathcal{V}}=\mathbf{E}[X]=\mu$, $\rho_{\mathcal{F}}=\mathbf{E}[AX]=A\mu$,
$\Upsilon_{\mathcal{V},\mathcal{F}}=\operatorname{Cov}(X,AX)=\mathbf{E}[X(AX)^T] = \mathbf{E}[XX^TA^T]= \Sigma A^T$, and $\Upsilon_{\mathcal{F},\mathcal{F}}=\operatorname{Cov}(AX,AX)= \mathbf{E}[AXX^TA^T]=A\Sigma A^T$. If $\Sigma$ is invertible, i.e., positive definite, and $A$ is full rank, then also $A\Sigma A^T$ is positive definite and invertible, so $(A\Sigma A^T)^{+}=(A\Sigma A^T)^{-1}$.
\end{proof}

\begin{proof}[Proof of Lemma \ref{lem:Gaussian}]
Given $b\in\mathbb{R}^B$, note that $x^\star_I(b)$ corresponds to the components labeled by $I$ of the solution $x^\star(b)$ of the optimization problem \eqref{original opt problem}, with
$
	A
	:=
	(\mathbb{0},I),
$
where $\mathbb{0}$ is the all-zero matrix in $\mathbb{R}^{B\times I}$ and $I$ is the identity matrix in $\mathbb{R}^{B\times B}$, and $x=(x_I,x_B)^T$. As the matrix $A$ is clearly full row rank, and
$
	H =
	\nabla^2 f(x^\star_I(b)b) = \nabla^2 f(x^\star(b)),
$
Corollary \ref{cor:fullrank} yields
$$
	\frac{d x^\star(b)}{db}
	= \Sigma A^T (A\Sigma A^T)^{-1}
	=
	\left( \begin{array}{c}
	\Sigma_{I,B} (\Sigma_{B,B})^{-1}\\
	\mathbb{1}
	\end{array} \right),
$$
so that $\frac{d x^\star_I(b)}{db} = \Sigma_{I,B} (\Sigma_{B,B})^{-1}$. The matrix identity $\Sigma_{I,B} (\Sigma_{B,B})^{-1}=- (H_{I,I})^{-1} H_{I,B}$ is standard textbook material. However, to show the statement involving the matrix $H$, we proceed from first principles, by applying the first order optimality condition to the restricted problem on $I$. Note that $x_I^\star(x_B)$ is defined by the optimality conditions
$
	\frac{d f (x_I^\star(x_B)x_B)}{d x_I} = \mathbb{0}.
$
Differentiating with respect to $x_B$, we get
$$
	\frac{d^2 f (x_I^\star(x_B)x_B)}{dx_I^2} \frac{d x_I^\star(x_B)}{d x_B}
	+ \frac{d^2 f (x_I^\star(x_B)x_B)}{dx_B dx_I} = \mathbb{0},
$$
or, equivalently,
$
	H_{I,I} \frac{d x_I^\star(x_B)}{d x_B} = - H_{I,B}.
$
The proof is concluded by inverting the matrix $H_{I,I}$, which is invertible as $H$ is positive definite by assumption, so that any principal submatrix of it is also positive definite.
\end{proof}


\section{Graph Laplacians and random walks}\label{sec:Laplacians and random walks}

This section is self-contained and provides several connections between graph Laplacians and random walks on weighted graphs. In particular, the connection between pseudoinverses of graph Laplacians and Green's functions of random walks that we present in Lemma \ref{lem:Laplacians and random walks} below 
is the key that allows us to derive spectral bounds to prove the decay of correlation property in Section \ref{sec:Optimal Network Flow}, as showed in Appendix \ref{app:Decay of correlation}.

Throughout, let $G=(V,E,W)$ be a simple (i.e., no self-loops, and no multiple edges), connected, undirected, weighted graph, where to each edge $\{v,w\}\in E$ is associated a non-negative weight $W_{vw}=W_{wv}>0$, and $W_{vw}=0$ if $\{v,w\}\not\in E$. Let $D$ be a diagonal matrix with entries $d_v=D_{vv}=\sum_{w\in V} W_{vw}$ for each $v\in V$.
For each vertex $v\in V$,
let $\mathcal{N}(v):=\{w\in V: \{v,w\}\in E\}$ be the set of node neighbors of $v$.
In this section we establish several connections between the graph Laplacian $L:=D-W$ and the random walk $X:=(X_t)_{t\ge 0}$ with transition matrix $P:=D^{-1}W$.
Henceforth, for each $v\in V$, let $\mathbf{P}_{v}$ be the law of a time homogeneous Markov chain $X_{0},X_{1},X_2,\ldots$ on $V$ with transition matrix $P$ and initial condition $X_0=v$. Analogously, denote by $\mathbf{E}_{v}$ the expectation with respect to this law. The hitting time to the site $v\in V$ is defined as
$
	T_v := \inf\{t\ge0 : X_t = v\}.
$
Let $\pi$ be the unique stationary distribution of the random walk, namely, $\pi^TP = \pi^T$. By substitution it is easy to check that $\pi_v:=\frac{d_v}{\sum_{v\in V} d_v}$ for each $v\in V$. We adopt the notation $e_v\in\mathbb{R}^V$ to denote the vector whose only non-zero component equals $1$ and corresponds to the entry associated to $v\in V$.

\subsection{Restricted Laplacians and killed random walks}
\label{sec:Restricted Laplacians and killed random walks}
The connection between Laplacians and random walks that we present in Appendix \ref{sec:Pseudoinverse of graph Laplacians and Green's function of random walks} below is established by investigating \emph{restricted} Laplacians and \emph{killed} random walks. Throughout this section, let $\bar z\in V$ be fixed. Let $\bar V:=V\setminus Z$, $\bar E:=E\setminus \{\{u,v\}\in E: u\in Z \text{ or } v\in Z\}$ and consider the graph $\bar G:=(\bar V,\bar E)$. 
Define $\bar W$ and $\bar D$ as the matrix obtained by removing the $\bar z$-th row and $\bar z$-th column from $W$ and $D$, respectively.
Then, $\bar L:=\bar D-\bar W \in \mathbb{R}^{\bar V \times \bar V}$ represents the so-called restricted Laplacian that is obtained by removing the $\bar z$-th row and $\bar z$-th column from $L$. Define $\bar P:=\bar D^{-1}\bar W \in \mathbb{R}^{\bar V \times \bar V}$. It is easy to check that $\bar P$ corresponds to the transition matrix of the transient part of the random walk that is obtained from $X$ by adding a cemetery at site $\bar z$. Creating a cemetery at $\bar z$ means modifying the walk $X$ so that $\bar z$ becomes a recurrent state, i.e., once the walk is in state $\bar z$ it will go back to $\bar z$ with probably $1$. This is clearly done by replacing the $\bar z$-th row of $P$ by a row with zeros everywhere but in the $\bar z$-th coordinate, where the entry is equal to $1$. Note that $\bar L$ does \emph{not} correspond to the graph Laplacian of the graph $(\bar V, \bar E, \bar W)$, as $\bar D_{vv} = \sum_{u\in V} W_{vu}$ that does not equal $\sum_{u\in \bar V} \bar W_{vu}$ if $v\in\mathcal{N}(\bar z)$. For the same reason, $\bar P$ does \emph{not} represent the ordinary random walk on $(\bar V, \bar E, \bar W)$.

The relation between the transition matrix $\bar P$ of the killed random walk and the law of the random walk $X$ itself is made explicit in the next proposition. We omit the proof, which follows easily from the Markov property of $X$.

\begin{proposition}\label{prop:killedrv}
For $v,\!w\!\in\!\bar V$, $t\!\ge\! 0$, 
$
	\bar P^t_{vw}
	\!=\!
	\mathbf{P}_v(X_t\!=\!w,T_{\bar z}>t).
$
\end{proposition}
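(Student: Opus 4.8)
The plan is to prove the identity by induction on $t$, the only real ingredients being the Markov property of $X$ and the elementary fact that $\bar P$ and $P$ agree on $\bar V\times\bar V$. Before starting the induction I would record that observation: since $\bar D$ and $\bar W$ are the submatrices of $D$ and $W$ indexed by $\bar V$, for $u,w\in\bar V$ we have $\bar D_{uu}=D_{uu}=d_u$ and $\bar W_{uw}=W_{uw}$, hence $\bar P_{uw}=\bar D_{uu}^{-1}\bar W_{uw}=P_{uw}$. I would also note that $\{T_{\bar z}>t\}=\{X_0\neq\bar z,\dots,X_t\neq\bar z\}$, so under $\mathbf{P}_v$ with $v\in\bar V$ the constraint $X_0\neq\bar z$ is automatic and, more importantly, on $\{T_{\bar z}>t\}$ one has $X_t\in\bar V$.

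For the base case $t=0$ both sides equal $\mathbf{1}[v=w]$: indeed $\bar P^0=I$ on $\bar V$, while $\mathbf{P}_v(X_0=w,\,T_{\bar z}>0)=\mathbf{1}[v=w]\,\mathbf{1}[v\neq\bar z]=\mathbf{1}[v=w]$ because $v\in\bar V$. For the inductive step, assuming the claim at time $t$, I would write
\[
	\bar P^{t+1}_{vw}
	=\sum_{u\in\bar V}\bar P^{t}_{vu}\,\bar P_{uw}
	=\sum_{u\in\bar V}\mathbf{P}_v(X_t=u,\,T_{\bar z}>t)\,P_{uw},
\]
then use the Markov property to rewrite each summand as $\mathbf{P}_v(X_t=u,\,T_{\bar z}>t,\,X_{t+1}=w)$, sum over $u\in\bar V$ (which loses nothing, since $\{T_{\bar z}>t\}\subseteq\{X_t\in\bar V\}$) to obtain $\mathbf{P}_v(T_{\bar z}>t,\,X_{t+1}=w)$, and finally use that $w\in\bar V$ forces $X_{t+1}\neq\bar z$ on this event, so that $\{T_{\bar z}>t,\,X_{t+1}=w\}=\{T_{\bar z}>t+1,\,X_{t+1}=w\}$. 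This yields $\bar P^{t+1}_{vw}=\mathbf{P}_v(X_{t+1}=w,\,T_{\bar z}>t+1)$ and closes the induction.

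Alternatively, one can bypass the induction by expanding the matrix power directly: $\bar P^{t}_{vw}$ is the sum of $\prod_{s=0}^{t-1}\bar P_{u_su_{s+1}}$ over all walks $v=u_0,u_1,\dots,u_t=w$ with $u_1,\dots,u_{t-1}\in\bar V$, and by the first observation this equals the same sum with $P$ in place of $\bar P$, which is precisely $\mathbf{P}_v(X_1\neq\bar z,\dots,X_{t-1}\neq\bar z,\,X_t=w)=\mathbf{P}_v(X_t=w,\,T_{\bar z}>t)$, again using $v,w\in\bar V$. There is no genuinely hard step here; the one point to watch is the bookkeeping that restricting the walk to $\bar V$ at every intermediate time is exactly the event $\{T_{\bar z}>t\}$, together with the (cost-free) upgrade from $T_{\bar z}>t$ to $T_{\bar z}>t+1$ that $w\in\bar V$ provides. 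Most of the write-up effort will simply be keeping the $\bar z$ versus $Z$ notation consistent with the surrounding text.
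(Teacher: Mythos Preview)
Your proof is correct and is precisely the standard argument the paper has in mind: the paper omits the proof entirely, stating only that it ``follows easily from the Markov property of $X$,'' and your induction (or equivalently your path-expansion) is exactly that. Your observation about the $\bar z$ versus $Z$ notation is also apt---in this section one should read $Z=\{\bar z\}$.
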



The following proposition relates the inverse of the reduced Laplacian $\bar L$ with the Green function of the killed random walk, namely, the function $(u,v)\in \bar V\times \bar V \rightarrow \sum_{t=0}^\infty \bar P^t_{uv}$, and with the hitting times of the original random walk $X$.

\begin{proposition}\label{prop:reducedlapandgreenfunction}
For each $v,w\in \bar V$, we have
$$
	\bar L^{-1}_{vw} 
	= \frac{1}{d_w} \sum_{t = 0}^\infty \bar P^t_{vw}
	= \bar L^{-1}_{ww}\mathbf{P}_v(T_w<T_{\bar z}),
$$
and
$
	\bar L^{-1}_{ww}
	= \frac{1}{d_w} \mathbf{E}_w [\sum_{t = 0}^{T_{\bar z}} \mathbf{1}_{X_t=w} ].
$
\end{proposition}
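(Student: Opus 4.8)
The plan is to work entirely with the killed random walk and its substochastic transition matrix $\bar P$, exploiting the fact that $\bar P$ has spectral radius strictly less than $1$ (since the walk on the connected graph $G$ eventually reaches $\bar z$ from any starting point). First I would establish the matrix identity $\bar L = \bar D(I - \bar P)$: indeed $\bar L = \bar D - \bar W = \bar D - \bar D \bar D^{-1}\bar W = \bar D(I - \bar P)$. Since $I - \bar P$ is invertible (spectral radius of $\bar P$ is $<1$), we get $\bar L^{-1} = (I-\bar P)^{-1}\bar D^{-1} = \left(\sum_{t=0}^\infty \bar P^t\right)\bar D^{-1}$, and reading off the $(v,w)$ entry gives $\bar L^{-1}_{vw} = \frac{1}{d_w}\sum_{t=0}^\infty \bar P^t_{vw}$, which is the first equality. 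Convergence of the Neumann series is exactly the place where connectedness of $G$ is used, so I would spell that out: for every $v\in\bar V$ there is a path in $G$ from $v$ to $\bar z$, hence $\mathbf{P}_v(T_{\bar z} \le t) \to 1$, and by Proposition \ref{prop:killedrv}, $\sum_{w\in\bar V}\bar P^t_{vw} = \mathbf{P}_v(T_{\bar z}>t)\to 0$, forcing $\bar P^t \to 0$ and the geometric series to converge.

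Next I would prove the probabilistic identity $\bar L^{-1}_{vw} = \bar L^{-1}_{ww}\,\mathbf{P}_v(T_w < T_{\bar z})$ via a first-passage (strong Markov) decomposition. Using Proposition \ref{prop:killedrv}, $\sum_{t=0}^\infty \bar P^t_{vw} = \mathbf{E}_v\big[\#\{t\ge 0: X_t = w,\ t < T_{\bar z}\}\big]$ is the expected number of visits to $w$ before hitting $\bar z$, starting from $v$. Decomposing on whether the walk reaches $w$ before $\bar z$: with probability $1 - \mathbf{P}_v(T_w<T_{\bar z})$ it never visits $w$ before $\bar z$ and contributes $0$; otherwise, by the strong Markov property at the first visit to $w$, the number of subsequent visits to $w$ before $\bar z$ has the same law as the total count started from $w$. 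This yields $\mathbf{E}_v[\cdots] = \mathbf{P}_v(T_w<T_{\bar z})\,\mathbf{E}_w[\cdots]$, and dividing by $d_w$ gives the claim. The special case $v=w$ of the first equality already gives $\bar L^{-1}_{ww} = \frac{1}{d_w}\mathbf{E}_w\big[\sum_{t=0}^{T_{\bar z}}\mathbf 1_{X_t=w}\big]$ (noting $X_{T_{\bar z}} = \bar z \neq w$, so the upper limit $T_{\bar z}$ versus $T_{\bar z}-1$ or $\infty$ in the killed-walk count is immaterial), which is the last displayed identity.

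The main obstacle, and the only genuinely delicate point, is making the strong Markov / first-passage decomposition rigorous for the \emph{killed} walk — in particular being careful that "visits to $w$ before $T_{\bar z}$" in the killed-chain picture matches "$X_t = w$ on $\{t < T_{\bar z}\}$" in the original chain, and that $\mathbf{P}_v(T_w<T_{\bar z})$ may be interpreted in either chain since the two agree up to time $T_{\bar z}$. Everything else is routine linear algebra (the $\bar L = \bar D(I-\bar P)$ factorization and Neumann series) plus the standard fact that on a finite connected graph the walk hits any fixed vertex in finite expected time. I would also remark that $\bar L^{-1}_{ww} > 0$ strictly (the walk started at $w$ visits $w$ at least at $t=0$), so dividing by it in the middle identity is legitimate.
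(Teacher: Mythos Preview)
Your proposal is correct and follows essentially the same approach as the paper: the Neumann series factorization $\bar L^{-1}=(I-\bar P)^{-1}\bar D^{-1}$ for the first equality, and a strong Markov first-passage decomposition at the first visit to $w$ (the paper writes this as the stopping time $S=\min\{T_w,T_{\bar z}\}$) for the second. One small bonus: your argument for convergence of the Neumann series uses only connectedness of $G$ (via $\mathbf P_v(T_{\bar z}>t)\to 0$), whereas the paper first assumes $\bar G$ is connected to invoke Perron--Frobenius and then handles the disconnected case separately via block structure; your route sidesteps that case split.
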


\begin{proof}
Let us first assume that $\bar G$ is connected. The matrix $\bar P$ is sub-stochastic as, clearly, if $v \not\in \mathcal{N}(\bar z)$ then $\sum_{w\in V} \bar P_{vw} = 1$, while if $v \in \mathcal{N}(\bar z)$ then $\sum_{w\in V} \bar P_{vw} < 1$. Then $\bar P$ is irreducible (in the sense of Markov chains, i.e., for each $v,w\in \bar V$ there exists $t$ to that $\bar P^t_{vw}\neq 0$) and the spectral radius of $\bar P$ is strictly less than $1$ (see Corollary 6.2.28 in \cite{Horn:1985:MA:5509}, for instance), so that the Neumann series $\sum_{t=0}^\infty \bar P^t$ converges.
The Neumann series expansion for $\bar L^{-1}$ yields
$
	\bar L^{-1}
	= \sum_{t=0}^\infty (I-\bar D^{-1} \bar L)^t \bar D^{-1}
	= \sum_{t=0}^\infty \bar P^t \bar D^{-1},
$
or, entry-wise, $\bar L^{-1}_{vw} = \frac{1}{d_w} \sum_{t = 0}^\infty \bar P^t_{vw}$. As $\bar P^t_{vw}=\mathbf{P}_v(X_t=w,T_{\bar z}>t)$ by Proposition \ref{prop:killedrv}, by the Monotone convergence theorem we can take the summation inside the expectation and get
$$
	\sum_{t = 0}^\infty \bar P^t_{vw} 
	= \sum_{t = 0}^\infty \mathbf{E}_v[\mathbf{1}_{X_t=w}\mathbf{1}_{T_{\bar z}>t}]
	= \mathbf{E}_v\left[\sum_{t = 0}^{T_{\bar z}} \mathbf{1}_{X_t=w}\right],
$$
where in the last step we used that $X_{T_{\bar z}}=\bar z$ and $\bar z\neq w$. Recall that if $S$ is a stopping time for the Markov chain $X:=X_0,X_1,X_2,\ldots$, then by the strong Markov property we know that, conditionally on $\{S<\infty\}$ and $\{X_S=w\}$, the chain $X_S,X_{S+1},X_{S+2},\ldots$ has the same law as a time-homogeneous Markov chain $Y:=Y_{0},Y_1,Y_2,\ldots$ with transition matrix $P$ and initial condition $Y_0=w$, and $Y$ is independent of $X_0,\ldots,X_S$. The hitting times $T_w$ and $T_{\bar z}$ are two stopping times for $X$, and so is their minimum $S:=\min\{T_w, T_{\bar z}\}$. As either $X_S=w$ or $X_S=\bar z$, we have
$$
	\mathbf{E}_v\left[\sum_{t = 0}^{T_{\bar z}} \mathbf{1}_{X_t=w}\right]
	= \mathbf{E}_v\left[\sum_{t = 0}^{T_{\bar z}} \mathbf{1}_{X_t=w} \Bigg\vert X_S=w\right]
	\mathbf{P}_v(X_S=w),
$$
where we used that, conditionally on $\{X_S=\bar z\}=\{T_w > T_{\bar z}\}$, clearly $\sum_{t = 0}^{T_{\bar z}} \mathbf{1}_{X_t=w} = 0$. Conditionally on $\{X_S=w\}=\{T_w < T_{\bar z}\}=\{S=T_w\}$, we have $T_{\bar z}=S + \inf\{t\ge 0:X_{S+t}=\bar z\}$, and the strong Markov property yields (note that the event $\{S<\infty\}$ has probability one from any starting point, as the graph $G$ is connected and the chain will almost surely eventually hit either $w$ or $\bar z$) that
$
	\mathbf{E}_v[\sum_{t = 0}^{T_{\bar z}} \mathbf{1}_{X_t=w} \vert X_S\!=\!w]
$
can be written as
$
	\mathbf{E}_v[\sum_{t = 0}^{\inf\{t\ge 0:X_{S+t}=\bar z\}} 
	\!\mathbf{1}_{X_{S+t}=w} \vert X_S\!=\!w]
	\!=\!
	\mathbf{E}_w[\sum_{t = 0}^{T_{\bar z}} \mathbf{1}_{X_t=w} ].
$
Putting everything together we have
$
	\bar L^{-1}_{vw}
	= \frac{1}{d_w} \mathbf{E}_w[\sum_{t = 0}^{T_{\bar z}} \mathbf{1}_{X_t=w} ]
	\mathbf{P}_v(T_w<T_{\bar z}).
$
As $\mathbf{P}_w(T_w<T_{\bar z})=1$, clearly
$
	\bar L^{-1}_{ww}
	= \frac{1}{d_w} \mathbf{E}_w[\sum_{t = 0}^{T_{\bar z}} \mathbf{1}_{X_t=w} ]
$
so that
$
	\bar L^{-1}_{vw}
	= \bar L^{-1}_{ww}
	\mathbf{P}_v(T_w<T_{\bar z}).
$
The argument just presented extends easily to the case when $\bar G$ is not connected. In fact, in this case the matrix $\bar P$ has a block structure, where each block corresponds to a connected component and to a sub-stochastic submatrix, so that the argument above can be applied to each block separately.
\end{proof}

The following result relates the inverse of the reduced Laplacian $\bar L$ with the pseudoinverse of the Laplacian $L$, which we denote by $L^+$. It is proved in \cite{FPRS07}[Appendix B].


\begin{proposition}\label{prop:connectionlaplacians}
For $v,w\in \bar V$,
$
	\bar L^{-1}_{vw}
	= (e_{v}-e_{\bar z})^T 
	L^{+} (e_{w}-e_{\bar z}).
$
\end{proposition}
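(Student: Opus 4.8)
The plan is to verify the identity by direct computation, showing that the matrix $M\in\mathbb{R}^{\bar V\times\bar V}$ defined by $M_{vw}:=(e_v-e_{\bar z})^TL^{+}(e_w-e_{\bar z})$ satisfies $\bar L M=I$, and then invoking invertibility of $\bar L$ to conclude $M=\bar L^{-1}$. Two preliminary facts are needed. First, $\bar L$ is invertible: extending $x\in\mathbb{R}^{\bar V}$ by a zero in coordinate $\bar z$ to a vector $\tilde x\in\mathbb{R}^V$, one has $x^T\bar L x=\tilde x^TL\tilde x\ge 0$, with equality only if $\tilde x\in\operatorname{Ker}(L)$; since $G$ is connected, $\operatorname{Ker}(L)=\operatorname{span}(\mathbb{1})$, and the only multiple of $\mathbb{1}$ with a vanishing coordinate is $\mathbb{0}$, so $\bar L$ is positive definite. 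Second, since $L$ is real symmetric with $\operatorname{Ker}(L)=\operatorname{span}(\mathbb{1})$, the pseudoinverse $L^{+}$ is symmetric and $LL^{+}=L^{+}L=\Pi$, where $\Pi:=I-\frac{1}{|V|}\mathbb{1}\mathbb{1}^T$ is the orthogonal projector onto $\operatorname{Im}(L)=\mathbb{1}^{\perp}$.

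Next I would compute $(\bar L M)_{uw}$ for $u,w\in\bar V$, writing $e_v,e_{\bar z}$ for the standard basis vectors of $\mathbb{R}^V$. We have $(\bar L M)_{uw}=\big(\sum_{v\in\bar V}\bar L_{uv}(e_v-e_{\bar z})\big)^TL^{+}(e_w-e_{\bar z})$. The key step is the identity $\sum_{v\in\bar V}\bar L_{uv}(e_v-e_{\bar z})=Le_u$: indeed $\bar L_{uv}=L_{uv}$, and the zero-row-sum property $\sum_{v\in V}L_{uv}=0$ gives $\sum_{v\in\bar V}L_{uv}=-L_{u\bar z}$, so the coefficients of $e_{\bar z}$ restore exactly the removed column and $\sum_{v\in\bar V}L_{uv}(e_v-e_{\bar z})=\sum_{v\in V}L_{uv}e_v=Le_u$. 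Hence $(\bar L M)_{uw}=e_u^TLL^{+}(e_w-e_{\bar z})=e_u^T\Pi(e_w-e_{\bar z})$. Since $\mathbb{1}^T(e_w-e_{\bar z})=1-1=0$, the projector $\Pi$ acts as the identity on $e_w-e_{\bar z}$, so $(\bar L M)_{uw}=e_u^T(e_w-e_{\bar z})=\delta_{uw}$, the last equality because $u\in\bar V$ forces $e_u^Te_{\bar z}=0$. Therefore $\bar L M=I$, and multiplying by $\bar L^{-1}$ yields $M=\bar L^{-1}$, which is the claim.

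I do not expect a genuine obstacle here; the only point requiring care is the bookkeeping in $\sum_{v\in\bar V}\bar L_{uv}(e_v-e_{\bar z})=Le_u$, which is precisely where the defining property of the Laplacian enters and lifts the reduced identity to a statement about $L$ and $L^{+}$ on all of $\mathbb{R}^V$; everything else is elementary linear algebra built on $\operatorname{Ker}(L)=\operatorname{span}(\mathbb{1})$. An alternative route conjugates $L$ by an orthogonal change of basis adapted to $\mathbb{1}$ and block-inverts, as in \cite{FPRS07}, but the direct verification above is shorter and self-contained given the facts recalled in this section.
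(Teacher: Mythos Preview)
Your proof is correct. The paper does not actually prove this proposition; it simply cites \cite{FPRS07}[Appendix B]. Your direct verification that $\bar L M = I$ via the identity $\sum_{v\in\bar V}\bar L_{uv}(e_v-e_{\bar z})=Le_u$ is clean and self-contained, and the two preliminary facts you invoke (positive definiteness of $\bar L$ from $\operatorname{Ker}(L)=\operatorname{span}(\mathbb{1})$, and $LL^{+}=\Pi$) are exactly what is needed. The alternative you mention---conjugating by an orthogonal basis adapted to $\mathbb{1}$ and block-inverting---is indeed closer to what the cited reference does; your route is shorter and avoids any auxiliary change of basis, at the modest cost of the bookkeeping step you flag. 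Either way the content is the same: the zero-row-sum property of $L$ is what allows the reduced object $\bar L$ to be expressed through $L^{+}$ acting on differences $e_v-e_{\bar z}$.
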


Proposition \ref{prop:reducedlapandgreenfunction} and Proposition \ref{prop:connectionlaplacians} allow us to relate the quantity $L^+$ to the \emph{difference} of the Green's function of the random walk $X$, as we discuss next.

\subsection{Laplacians and random walks}
\label{sec:Pseudoinverse of graph Laplacians and Green's function of random walks}
We now relate the Moore-Penrose pseudoinverse of the Laplacian $L:=D-W$ with the Green's function
$
	(u,v)\in V \times V
	\rightarrow
	\sum_{t=0}^\infty P^t_{uv} =
	\mathbf{E}_u[
	\sum_{t = 0}^\infty 
	\mathbf{1}_{X_t=v}]
$
of the random walk, which represents the expected number of times the Markov chain $X$ visits site $v$ when it starts from site $u$.
Notice that as the graph $G$ is finite and connected, then the Markov chain $X$ is recurrent and the Green's function itself equals infinity for any $u,v\in V$. In fact, the following result involves \emph{differences} of the Green's function, not the Green's function itself.

\begin{lemma}
\label{lem:Laplacians and random walks}
For any $u,v,w,z\in V$, we have
\begin{align*}
	(e_{u}-e_{v})^T L^{+} (e_{w}-e_{z})
	&=
	\sum_{t=0}^\infty (e_{u}-e_{v})^T P^{t} 
	\left( \frac{e_{w}}{d_w}-\frac{e_{z}}{d_z} \right).
\end{align*}
\end{lemma}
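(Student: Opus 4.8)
The plan is to deduce the identity from the connections already established between restricted Laplacians, killed random walks, and $L^{+}$ (Propositions~\ref{prop:killedrv}--\ref{prop:connectionlaplacians}), and then to pass from the killed walk $\bar P$ to the free walk $P$ by splitting a trajectory at its first visit to the deleted vertex.

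First I would fix a reference vertex $\bar z\in V$ (degenerate coincidences among $\bar z,u,v,w,z$ being harmless under the convention $\bar P^{t}_{\bar z,\cdot}:=0$, which keeps Propositions~\ref{prop:killedrv}--\ref{prop:connectionlaplacians} valid with $\bar L^{-1}_{\bar z,\cdot}=\bar L^{-1}_{\cdot,\bar z}:=0$). Writing $e_{u}-e_{v}=(e_{u}-e_{\bar z})-(e_{v}-e_{\bar z})$ and $e_{w}-e_{z}=(e_{w}-e_{\bar z})-(e_{z}-e_{\bar z})$, bilinearity of $(x,y)\mapsto x^{T}L^{+}y$ together with Proposition~\ref{prop:connectionlaplacians} gives
$$(e_{u}-e_{v})^{T}L^{+}(e_{w}-e_{z})=\bar L^{-1}_{uw}-\bar L^{-1}_{vw}-\bar L^{-1}_{uz}+\bar L^{-1}_{vz},$$
where $\bar L$ is the matrix obtained from $L$ by deleting the row and column of $\bar z$. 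Proposition~\ref{prop:reducedlapandgreenfunction} ($\bar L^{-1}_{ab}=\frac{1}{d_{b}}\sum_{t\ge0}\bar P^{t}_{ab}$) then rewrites the right-hand side as $\sum_{t=0}^{\infty}(e_{u}-e_{v})^{T}\bar P^{\,t}\bigl(\frac{e_{w}}{d_{w}}-\frac{e_{z}}{d_{z}}\bigr)$, an absolutely convergent series since the spectral radius of $\bar P$ is strictly less than $1$ (established in the proof of Proposition~\ref{prop:reducedlapandgreenfunction}). So it suffices to show that $\bar P$ may be replaced by $P$ in this last expression.

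For this, set $c:=\frac{e_{w}}{d_{w}}-\frac{e_{z}}{d_{z}}$ and record the key feature $\pi^{T}c=\frac{\pi_{w}}{d_{w}}-\frac{\pi_{z}}{d_{z}}=0$, which holds because $\pi_{v}=d_{v}/\sum_{u\in V}d_{u}$. Decomposing a path at the first hitting time $T_{\bar z}$ and using the strong Markov property, for every $b\neq\bar z$ and $a\in V$ one has $P^{t}_{ab}=\bar P^{t}_{ab}+\sum_{s=0}^{t}\mathbf{P}_{a}(T_{\bar z}=s)\,P^{t-s}_{\bar z b}$ (the killed part being $\mathbf{P}_{a}(X_{t}=b,\,T_{\bar z}>t)$ by Proposition~\ref{prop:killedrv}). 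Contracting with $e_{u}-e_{v}$ on the left and $c$ on the right, summing over $t$, and recognizing a Cauchy product, I would get
$$\sum_{t\ge0}(e_{u}-e_{v})^{T}P^{t}c=\sum_{t\ge0}(e_{u}-e_{v})^{T}\bar P^{\,t}c+\Bigl(\sum_{s\ge0}\Delta(s)\Bigr)\Bigl(\sum_{r\ge0}e_{\bar z}^{T}P^{r}c\Bigr),$$
with $\Delta(s):=\mathbf{P}_{u}(T_{\bar z}=s)-\mathbf{P}_{v}(T_{\bar z}=s)$. Since $G$ is finite and connected the walk is recurrent, so $\sum_{s\ge0}\mathbf{P}_{a}(T_{\bar z}=s)=\mathbf{P}_{a}(T_{\bar z}<\infty)=1$ for every $a$, hence $\sum_{s\ge0}\Delta(s)=0$; and $e_{\bar z}^{T}P^{r}c=\frac{P^{r}_{\bar z w}}{d_{w}}-\frac{P^{r}_{\bar z z}}{d_{z}}\to\frac{\pi_{w}}{d_{w}}-\frac{\pi_{z}}{d_{z}}=0$ geometrically, so $\sum_{r\ge0}e_{\bar z}^{T}P^{r}c$ is finite. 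The correction term is thus $0\cdot(\text{finite})=0$, and combining with the previous paragraph yields the claimed formula.

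The one genuine obstacle is that the free-walk Green's function $\sum_{t}P^{t}_{ab}$ is infinite, so it cannot be manipulated term by term; the argument works only because \emph{both} contractions annihilate the stationary mode ($\mathbb{1}^{T}(e_{u}-e_{v})=0$ on the left, $\pi^{T}c=0$ on the right), which is exactly what turns the correction into a true product $0\cdot(\text{finite})$ rather than an indeterminate $0\cdot\infty$ — in particular this is why the $w$- and $z$-terms must be carried together. When $P$ is aperiodic all the series above converge absolutely; in the periodic case one inserts a damping factor $\zeta^{t}$, $\zeta\in[0,1)$, runs the same computation, and lets $\zeta\uparrow1$, so that the displayed free-walk series is read as an Abel limit — a point immaterial for the applications of this lemma.
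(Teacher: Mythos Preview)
Your argument is correct and takes a genuinely different route from the paper. Both proofs start from Propositions~\ref{prop:reducedlapandgreenfunction}--\ref{prop:connectionlaplacians} to reach $\sum_{t\ge0}(e_u-e_v)^T\bar P^{\,t}c$ with $c=e_w/d_w-e_z/d_z$, but diverge in how they replace $\bar P$ by $P$. The paper chooses $\bar z=z$, rewrites $\bar L^{-1}_{uw}-\bar L^{-1}_{vw}$ as an effective-resistance times a potential difference $\mathbf{P}_u(T_w<T_z)-\mathbf{P}_v(T_w<T_z)$, and then invokes a chain of cited identities from Aldous--Fill (potentials $\leftrightarrow$ hitting-time differences, commute time $\leftrightarrow$ effective resistance) and Friedrich (hitting-time differences $\leftrightarrow$ $\sum_t(P^t_{vz}-P^t_{uz})$) to close the loop. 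You instead use the first-passage decomposition $P^t_{ab}=\bar P^t_{ab}+\sum_{s\le t}\mathbf P_a(T_{\bar z}=s)P^{t-s}_{\bar z b}$ directly, so that the discrepancy between the $\bar P$- and $P$-series is a Cauchy product $(\sum_s\Delta(s))(\sum_r e_{\bar z}^TP^rc)$; recurrence kills the first factor and $\pi^Tc=0$ makes the second finite. Your route is shorter and self-contained (only the strong Markov property and Mertens' theorem are needed), while the paper's detour through hitting times and effective resistance makes the potential-theoretic content more visible. Your explicit handling of the periodic case via Abel summation is also a point the paper leaves implicit.
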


\begin{proof}
Using first Proposition \ref{prop:connectionlaplacians} and then Proposition \ref{prop:reducedlapandgreenfunction} we obtain, for any $u,v,w,z\in V$ (choose $\bar z$ to be $z$ in Appendix \ref{sec:Restricted Laplacians and killed random walks}),
$
	(e_{u}-e_{v})^T L^{+} (e_{w}-e_{z})
	= 
	(e_{u}-e_{z})^T L^{+} (e_{w}-e_{z})
	-(e_{v}-e_{z})^T L^{+} (e_{w}-e_{z})
	=\bar L^{-1}_{uw} - \bar L^{-1}_{vw}
	= (e_{w}-e_{z})^T L^{+} (e_{w}-e_{z})
	\left\{
	\mathbf{P}_u(T_w<T_z) -
	\mathbf{P}_v(T_w<T_z)
	\right\}.
$
From (3.27) in the proof of Proposition 3.10 in Chapter 3 in \cite{aldous-fill-2014}, upon identifying $v\rightarrow u,x\rightarrow v,v_0\rightarrow w,a\rightarrow z$, we immediately have the following relation between the difference of potentials and hitting times of the random walk $X$:
$
	\mathbf{P}_u(T_w<T_z) -
	\mathbf{P}_v(T_w<T_z)
	=
	\pi_{w}\mathbf{P}_{w}(T_z<T^+_w)
	\left\{\mathbf{E}_uT_z - \mathbf{E}_vT_z + \mathbf{E}_vT_w- \mathbf{E}_uT_w\right\},
$
where $\pi_v:=\frac{d_v}{\sum_{v\in V} d_v}$ is the $v$-th component of the stationary distribution of the random walk $X$, and $
	T^+_v := \inf\{t\ge 1 : X_t = v\}.
$
From Corollary 8 in Chapter 2 in \cite{aldous-fill-2014}, we have
$$
	\pi_{w}\mathbf{P}_{w}(T_z<T^+_w)
	= 
	\begin{cases}
		\frac{1}{\mathbf{E}_wT_z+\mathbf{E}_zT_w} &\text{if } w\neq z,\\
		\pi_w &\text{if } w= z,
	\end{cases}
$$
and we recall the connection between commute times and effective resistance (see, for example, Corollary 3.11 in \cite{aldous-fill-2014}):
$
	\mathbf{E}_wT_z+\mathbf{E}_zT_w
	= (e_{w}-e_{z})^T L^{+} (e_{w}-e_{z}) \sum_{v\in V} d_v.
$
Lemma 3.3 in \cite{Friedrich:2010yu} yields
$
	\mathbf{E}_uT_z - \mathbf{E}_vT_z
	= \frac{1}{\pi_z} \sum_{t=0}^\infty (P^t_{vz} - P^t_{uz}),
$
and
$
	\mathbf{E}_uT_w - \mathbf{E}_vT_w
	= \frac{1}{\pi_w} \sum_{t=0}^\infty (P^t_{vw} - P^t_{uw}).
$
The proof of the lemma follows by combining everything together.
\end{proof}

We have the following corollary of Lemma \ref{lem:Laplacians and random walks}, which immediately yields the proof of Theorem \ref{thm:comparisontheoremnetworkflow} in Section \ref{sec:Optimal Network Flow}.

\begin{corollary}
\label{cor:lap and rws}
For $u,v\in V$, $f=(f_z)_{z\in V}\in\mathbb{R}^V$ with $\mathbb{1}^Tf=0$,
\begin{align*}
	(e_{u}-e_{v})^T L^{+} f
	&=
	\sum_{z\in V} \sum_{t=0}^\infty (P^t_{uz} - P^t_{vz}) \frac{f_z}{d_z}.
\end{align*}
\end{corollary}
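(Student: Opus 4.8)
The plan is to deduce Corollary \ref{cor:lap and rws} from Lemma \ref{lem:Laplacians and random walks} by a \emph{recentering} trick that exploits the hypothesis $\mathbb{1}^T f = 0$. First I would fix an arbitrary reference vertex $z_0\in V$. Since $\sum_{w\in V} f_w = \mathbb{1}^T f = 0$, the vector $f = \sum_{w\in V} f_w e_w$ can equivalently be written as $f = \sum_{w\in V} f_w (e_w - e_{z_0})$, because subtracting $\bigl(\sum_{w\in V} f_w\bigr) e_{z_0} = 0$ changes nothing. This expresses $f$ as a finite linear combination of the ``difference'' vectors $e_w - e_{z_0}$, which are exactly the objects to which Lemma \ref{lem:Laplacians and random walks} applies.

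Next I would use linearity of the scalar-valued map $x\mapsto (e_u - e_v)^T L^+ x$ to write
$$(e_u-e_v)^T L^+ f = \sum_{w\in V} f_w\,(e_u-e_v)^T L^+ (e_w - e_{z_0}),$$
and then invoke Lemma \ref{lem:Laplacians and random walks} with its parameter ``$z$'' taken to be $z_0$, so that $(e_u-e_v)^T L^+(e_w - e_{z_0}) = \sum_{t=0}^\infty (e_u-e_v)^T P^t\bigl(\tfrac{e_w}{d_w} - \tfrac{e_{z_0}}{d_{z_0}}\bigr)$. Substituting this in and splitting the bracket into its two pieces, the $e_{z_0}$-piece contributes the factor $\bigl(\sum_{w\in V} f_w\bigr)\,\tfrac{1}{d_{z_0}}\sum_{t=0}^\infty (P^t_{uz_0} - P^t_{vz_0})$, which vanishes since $\mathbb{1}^T f = 0$. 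The surviving piece equals $\sum_{w\in V} f_w \sum_{t=0}^\infty \tfrac{1}{d_w}\bigl(P^t_{uw} - P^t_{vw}\bigr)$, and renaming $w$ to $z$ gives precisely the claimed identity.

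The remaining points are purely bookkeeping: the outer sum over $w\in V$ is finite, so interchanging it freely with the sum over $t$ (and with the linear map $L^+$) is unproblematic, and each individual series $\sum_{t\ge 0}(P^t_{uz} - P^t_{vz})$ converges — this was already recorded inside the proof of Lemma \ref{lem:Laplacians and random walks} through the hitting-time identities — so the right-hand side is well defined termwise. There is no genuine obstacle here; the one substantive observation is that the hypothesis $\mathbb{1}^T f = 0$ is exactly what makes the reference vertex $z_0$ drop out of the final formula, consistently with the fact that $L\mathbb{1}=0$ implies $L^+\mathbb{1} = 0$, so that only the component of $f$ orthogonal to $\mathbb{1}$ affects the left-hand side.
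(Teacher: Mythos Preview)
Your proposal is correct and follows essentially the same route as the paper. The only cosmetic difference is that the paper, instead of fixing a single reference vertex $z_0$, sums the identity of Lemma~\ref{lem:Laplacians and random walks} over all $z\in V$ and uses $L^+\mathbb{1}=0$ to obtain a formula for $(e_u-e_v)^T L^+ e_w$ with a correction term $\tfrac{1}{|V|}\sum_{z}\sum_t (P^t_{uz}-P^t_{vz})/d_z$, which then cancels upon multiplying by $f_w$ and summing thanks to $\mathbb{1}^T f=0$; your single-$z_0$ recentering achieves the same cancellation more directly.
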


\begin{proof}
From Lemma \ref{lem:Laplacians and random walks}, by summing the quantity $(e_{u}-e_{v})^T L^{+} (e_{w}-e_{z})$ over $z\in V$, recalling that $\sum_{z\in V} e_z = \mathbb{1}$ and $L^+\mathbb{1} = 0$ we have
$
	(e_{u}-e_{v})^T L^{+} e_{w}
	=
	\sum_{t=0}^\infty (P^t_{uw} - P^t_{vw}) \frac{1}{d_w}
	- \frac{1}{|V|} \sum_{z\in V} \sum_{t=0}^\infty (P^t_{uz} - P^t_{vz}) \frac{1}{d_z}.
$
The identity in the statement of the Lemma follows easily as $f=\sum_{w\in V} f_w e_w$ and $\sum_{w\in V} f_w=0$ by assumption. 
\end{proof}


\section{Decay of correlation}
\label{app:Decay of correlation}

This appendix is devoted to the proofs of the decay of correlation properties stated in Section \ref{sec:Optimal Network Flow}, namely, Theorem \ref{thm:Decay of correlation} (set-to-point) and Lemma \ref{lem:Point-to-set decay of correlation} (point-to-set).
These proofs rely on the sensitivity analysis for the network flow problem established in Theorem \ref{thm:comparisontheoremnetworkflow}.

Henceforth, consider the general setting introduced in Appendix \ref{sec:Laplacians and random walks}, and let $d$ denote the graph-theoretical distance on $G$: that is, $d(u,v)$ denotes the length of the shortest path between vertex $u$ and vertex $v$. Note that $d(u,v) = \inf\{t\ge 0 : P^t_{uv}\neq 0\}$, as we assumed that to each edge $\{v,w\}\in E$ is associated a non-negative weight $W_{vw}=W_{wv}>0$. Let $n:=|V|$, and let $-1\le\lambda_n \le \lambda_{n-1} \le \cdots \le \lambda_2 < \lambda_1 =1$ be the eigenvalues of $P$. Define $\lambda:=\max\{|\lambda_2|, |\lambda_n|\}$. 

The backbone behind the proof of Theorem \ref{thm:Decay of correlation} and Lemma \ref{lem:Point-to-set decay of correlation} is given by the following lemma.

\begin{lemma}
\label{lem:Laplacians and random walks bound}
For any $U, Z\subseteq V$ and $(f_z)_{z\in Z}\in\mathbb{R}^Z$ we have
$$
	\sqrt{
	\frac{1}{2} \!\!\sum_{\substack{u,v\in U :\\\{u,v\}\in E}}\!\!\!
	\bigg( \sum_{z\in Z} \sum_{t=0}^\infty (P^t_{uz} \!-\! P^t_{vz}) f_z \bigg)^2
	}
	\!\!\le\! \alpha\,\!
	\frac{\lambda^{d(U,Z)}}{1-\lambda}
	\!\sqrt{\sum_{z\in Z} f_z^2 d_z},
$$
with $\alpha:= \frac{\max_{v\in U} \sqrt{2 |\mathcal{N}(v) \cap U|}}{\min_{v\in U} \sqrt{d_v}}$.
\end{lemma}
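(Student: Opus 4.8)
The plan is to control the inner sum $\sum_{z\in Z}\sum_{t\ge0}(P^t_{uz}-P^t_{vz})f_z$ for each edge $\{u,v\}$ by exploiting two facts: that $P^t_{uz}$ and $P^t_{vz}$ vanish for $t<d(\{u,v\},Z)$, so the geometric series starts late; and that the deviation of $P^t$ from the stationary projection decays like $\lambda^t$ in an appropriate norm. First I would introduce the stationary distribution $\pi$ (with $\pi_z = d_z/\sum_v d_v$) and the projection $\Pi$ onto $\pi$. Since $\mathbb{1}^T$ applied to the summand difference contributes nothing — indeed $(e_u-e_v)^T\Pi = 0$ because $\Pi$ has identical rows — I can replace $P^t$ by $P^t - \Pi$ without changing the sum: $\sum_{z}(P^t_{uz}-P^t_{vz})f_z = (e_u-e_v)^T(P^t-\Pi)g$, where $g=(f_z)_{z\in Z}$ extended by zero. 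Summing over $t$, the terms with $t < r := d(\{u,v\},Z)$ vanish because $P^t_{uz}=P^t_{vz}=0$ when $d(u,z),d(v,z) > t$ (here one uses $d(u,v)=\inf\{t:P^t_{uv}\neq0\}$ stated in the appendix preamble), while for $t\ge r$ one uses the spectral bound $\|(P^t-\Pi)h\|_{\pi^{-1}} \le \lambda^t \|h\|_{\pi^{-1}}$ in the $\pi$-weighted inner product $\langle a,b\rangle_{\pi^{-1}} = \sum_v a_v b_v/\pi_v$, under which $P$ is self-adjoint (reversibility of the weighted random walk) with spectral gap governed by $\lambda = \max\{|\lambda_2|,|\lambda_n|\}$.

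The key computation then goes as follows. Let $M := \sum_{t=r}^\infty (P^t-\Pi)$, so the quantity of interest for edge $\{u,v\}$ is $(e_u-e_v)^T M g$. Working in the $\pi$-weighted geometry, $\|Mg\|_{\pi^{-1}} \le \big(\sum_{t\ge r}\lambda^t\big)\|g\|_{\pi^{-1}} = \frac{\lambda^r}{1-\lambda}\|g\|_{\pi^{-1}}$, and $\|g\|_{\pi^{-1}}^2 = \sum_{z\in Z} f_z^2/\pi_z = (\sum_v d_v)\sum_{z\in Z} f_z^2/d_z$. I would then pull this back to the standard norm: for the left-hand side, $\sum_{u,v\in U:\{u,v\}\in E}\big((e_u-e_v)^T Mg\big)^2$. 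Writing $h := Mg$ and noting each edge contributes $(h_u-h_v)^2 \le 2(h_u^2+h_v^2)/\pi_{\min}\cdot\pi_{\min}$ — more carefully, $(h_u-h_v)^2 = (\sqrt{\pi_u}\tfrac{h_u}{\sqrt{\pi_u}} - \sqrt{\pi_v}\tfrac{h_v}{\sqrt{\pi_v}})^2 \le (\pi_u+\pi_v)\big(\tfrac{h_u^2}{\pi_u}+\tfrac{h_v^2}{\pi_v}\big)$ — and summing over edges in $U$, each vertex $v$ appears $|\mathcal N(v)\cap U|$ times, giving $\sum_{\{u,v\}}(h_u-h_v)^2 \le 2\max_{v\in U}(\pi_v\,|\mathcal N(v)\cap U|)\cdot\|h\|_{\pi^{-1}}^2$ after bounding $\pi_u+\pi_v \le 2\max\pi$; tracking constants with the factor $\tfrac12$ out front and $\pi_v = d_v/\sum_w d_w$ should produce exactly $\alpha = \max_{v\in U}\sqrt{2|\mathcal N(v)\cap U|}/\min_{v\in U}\sqrt{d_v}$ once the $\sum_v d_v$ factors cancel between $\|h\|_{\pi^{-1}}$ and $\|g\|_{\pi^{-1}}$.

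The main obstacle I anticipate is bookkeeping: getting the weighted-norm manipulations and the cancellation of the $\sum_v d_v$ normalizations to yield precisely the stated constant $\alpha$, rather than something off by a factor, and handling the edgewise passage between $\ell_2$ and the $\pi$-weighted $\ell_2$ cleanly. A subtler point is justifying the spectral contraction $\|(P^t-\Pi)h\|_{\pi^{-1}}\le\lambda^t\|h\|_{\pi^{-1}}$: this requires that $P$ is self-adjoint with respect to $\langle\cdot,\cdot\rangle_{\pi^{-1}}$ (equivalently $D^{-1/2}WD^{-1/2}$ is symmetric, which holds since $W$ is symmetric), that $h\perp\mathbb{1}$ in the relevant pairing is automatic here because $Mg$ lies in the range of $P^t-\Pi$, and that the remaining eigenvalues all have modulus at most $\lambda$. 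One should also confirm the interchange of the $t$-sum with the (finite) $z$-sum and the telescoping $\sum_{t\ge r}(P^t-\Pi) = \sum_{t\ge0}(P^{t+r}-\Pi)$ is valid, which follows from absolute convergence of $\sum_{t\ge0}\lambda^t$. With these in hand, Theorem \ref{thm:Decay of correlation} follows by applying the lemma with $U = V_{\vec F}$ and $f_z = d(b)_z^{-1}\,\tfrac{db(\varepsilon)_z}{d\varepsilon}$ and invoking Theorem \ref{thm:comparisontheoremnetworkflow}.
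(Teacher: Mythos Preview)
Your overall strategy is the paper's: subtract the stationary projection, observe that the difference $P^t_{uz}-P^t_{vz}$ vanishes for $t<d(U,Z)$, and then use the spectral gap to sum the geometric tail. The paper carries this out by writing out the eigendecomposition of $\Gamma=D^{-1/2}WD^{-1/2}$ explicitly, defining $h_u=\sum_{k\ge2}\frac{\lambda_k^{d(U,Z)}}{1-\lambda_k}\frac{\psi_{ku}}{\sqrt{d_u}}\sum_{z}\psi_{kz}\sqrt{d_z}f_z$, and then bounding $\sum_{u\in V}d_u h_u^2$ by orthonormality of the $\psi_k$'s. Your operator formulation with $h=Mg$ is just a repackaging of the same computation.

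However, two points will cause your argument to fail as written. First, your weighted inner product is the wrong one: $P$ is self-adjoint with respect to $\langle a,b\rangle_\pi=\sum_v a_vb_v\pi_v$ (this is what the symmetry of $D^{-1/2}WD^{-1/2}$ encodes), \emph{not} with respect to $\langle a,b\rangle_{\pi^{-1}}=\sum_v a_vb_v/\pi_v$; in the latter inner product it is $P^T$, not $P$, that is self-adjoint. Consequently the contraction you need is $\|(P^t-\Pi)h\|_\pi\le\lambda^t\|h\|_\pi$, and then $\|g\|_\pi^2=\tfrac{1}{c}\sum_{z}f_z^2 d_z$ produces exactly the factor $\sqrt{\sum_z f_z^2 d_z}$ in the statement, whereas your $\|g\|_{\pi^{-1}}^2$ gives $\sum_z f_z^2/d_z$ and the constants will not close. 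Second, you set $r:=d(\{u,v\},Z)$ edge by edge and then define $M:=\sum_{t\ge r}(P^t-\Pi)$; but if $r$ varies with the edge then $h=Mg$ varies with the edge, and the step $\sum_{\{u,v\}}(h_u-h_v)^2\le 2\max_v|\mathcal N(v)\cap U|\sum_u h_u^2$ no longer makes sense. Use the uniform $r=d(U,Z)$ as the paper does. With these two fixes your sketch goes through and matches the paper's bound with the stated $\alpha$.
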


\begin{proof}
Let $\Gamma := D^{1/2} P D^{-1/2} = D^{-1/2} W D^{-1/2}$. This matrix is clearly similar to $P$ and symmetric. Let denote by $\psi_n,\ldots,\psi_1$ the orthonormal eigenvectors of $\Gamma$ corresponding to the eigenvalues $\lambda_n \le \lambda_{n-1} \le \cdots \le \lambda_2 \le \lambda_1$, respectively. By substitution, it is easy to check that $\sqrt{\pi}\equiv(\sqrt{\pi_v})_{v\in V}$ is an eigenvector of $\Gamma$ with eigenvalue equal to $1$, where we recall that $\pi_v=d_v/\sum_{v\in V}d_v$. Since this eigenvector has positive entries, it follows by the Perron-Frobenius theory that $-1 \le \lambda_n \le \lambda_{n-1} \le \cdots \le \lambda_2 < \lambda_1 = 1$ and that $\psi_1=\sqrt{\pi}$. As $\Gamma=\sum_{k=1}^n \lambda_k \psi_k\psi_k^T$, by the orthonormality of the eigenvectors we have, for $t\ge 0$, $u,z\in V$,
$$
	P^t_{uz} = (D^{-1/2}\Gamma^t D^{1/2})_{uz}
	= \pi_z + \sum_{k=2}^n \lambda^t_k \psi_{ku} \psi_{kz} \sqrt{\frac{d_z}{d_u}},
$$
where $\psi_{ku}\equiv (\psi_k)_u$ is the $u$-th component of $\psi_k$.
As $P^t_{uz}=0$ if $d(u,z)>t$, we have $P^t_{uz}-P^t_{vz} = \mathbf{1}_{d(U,Z) \le t} (P^t_{uz}-P^t_{vz})$ for any $u,v\in U, z\in Z$. Hence, for any $u,v\in U$, let $g_{uv} := \sum_{z\in Z} \sum_{t=0}^\infty (P^t_{uz} - P^t_{vz}) f_z$, which equals
\begin{align*}
	g_{uv} 
	= 
	\sum_{k=2}^n
	\left(
	\frac{\psi_{ku}}{\sqrt{d_u}}
	-
	\frac{\psi_{kv}}{\sqrt{d_v}}
	\right)
	\sum_{z\in Z} 
	\psi_{kz} \sqrt{d_z} f_z
	\sum_{t=d(U,Z)}^\infty \lambda^t_k.
\end{align*}
As $\lambda <1$ by assumption, the geometric series converges for any $k\neq 1$. If we define 
$h_{u}:=
	\sum_{k=2}^n
	\frac{\lambda_k^{d(U,Z)}}{1-\lambda_k}
	\frac{\psi_{ku}}{\sqrt{d_u}}
	\sum_{z\in Z} \psi_{kz} \sqrt{d_z} f_z
$	
for each $u\in V$, we have $g_{uv} = h_u - h_v$, and 
the triangle inequality for the $\ell_2$-norm yields
$
	(\sum_{u,v\in U :\{u,v\}\in E} g_{uv}^2)^{1/2}
	\le 
	2 (\sum_{u,v\in U :\{u,v\}\in E} 
	h_{u}^2
	)^{1/2}
$
which is upper-bounded by
$
	2 \sqrt{\max_{u\in U} |\mathcal{N}(u) \cap U|} (\sum_{u\in U} h_u^2)^{1/2},
$
where the factor $2$ comes by the symmetry between $u$ and $v$. Expanding the squares and using that $|\lambda_k| \le \lambda$ for each $k\neq 1$, we get
\begin{align*}
	d_u h_{u}^2
	\le&\,
	\frac{\lambda^{2d(U,Z)}}{(1-\lambda)^2}
	\sum_{k=1}^n
	\psi^2_{ku}
	\bigg(\sum_{z\in Z} \psi^2_{kz} d_z f^2_z
	+ \sum_{z\neq z'} \psi_{kz}\psi_{kz'} \sqrt{d_zd_{z'}} f_zf_{z'} \bigg)\\
	&\,+ \sum_{k\neq k'\ge 2}
	\frac{\lambda_k^{d(U,Z)}\lambda_{k'}^{d(U,Z)}}{(1-\lambda_k)(1-\lambda_{k'})}
	\psi_{ku}\psi_{k'u}
	\sum_{z,z'} \psi_{kz} \psi_{k'z'} \sqrt{d_zd_{z'}} f_z f_{z'},
\end{align*}
where we also used that $\sum_{k=2}^n x_k \le \sum_{k=1}^n x_k$ if $x_1,\ldots,x_n$ are non-negative numbers.
Let $\Psi$ denote the matrix having the eigenvectors $\psi_1,\ldots,\psi_n$ in its columns, namely, $\Psi_{uk}=(\psi_{k})_u=\psi_{ku}$. This is an orthonormal matrix, so both its columns and rows are orthonormal, namely, $\sum_{u=1}^n \psi_{ku} \psi_{k'u} = \mathbf{1}_{k=k'}$ and $\sum_{k=1}^n \psi_{ku} \psi_{kv} = \mathbf{1}_{u=v}$. Using this fact,
$$
	\sum_{u\in U} h^2_u \le 
	\frac{\sum_{u\in V} d_u h^2_u}{\min_{u\in U} d_u}
	\le 
	\frac{1}{\min_{u\in U} d_u} \frac{\lambda^{2d(U,Z)}}{(1-\lambda)^2}
	\sum_{z\in Z} d_z f_z^2,
$$
and the proof follows by putting all the pieces together, realizing that the upper bound in the statement of the lemma corresponds to $\frac{1}{\sqrt{2}}(\sum_{u,v\in U :\{u,v\}\in E} g_{uv}^2)^{1/2}$.
\end{proof}

%
%

We are finally ready to present the proof of Theorem \ref{thm:Decay of correlation}.

\begin{proof}[Proof of Theorem \ref{thm:Decay of correlation}]
Consider the setting developed in Section \ref{sec:Optimal Network Flow}. Fix $\varepsilon\in\mathbb{R}$. From Theorem \ref{thm:comparisontheoremnetworkflow} we have
$
	\frac{d x^\star(b(\varepsilon))}{d \varepsilon} 
	= \Sigma(b(\varepsilon))A^T L(b(\varepsilon))^{+} \frac{d b(\varepsilon)}{d \varepsilon}
$
or, entry-wise, for any $(u,v)\in \vec{E}$,
$$
	\frac{d x^\star(b(\varepsilon))_{(u,v)}}{d \varepsilon} 
	= W(b(\varepsilon))_{uv} (e_u-e_v)^T L(b(\varepsilon))^{+} \frac{d b(\varepsilon)}{d \varepsilon}.
$$
Define $U:=V_{\vec F}$, and let $(U,F)$ be the undirected graph associated to $(U,\vec{F})$ (see Remark \ref{rem:notation}). Clearly, $(\sum_{e\in \vec{F}} (\frac{d x^\star(b(\varepsilon))_e}{d \varepsilon})^2)^{1/2}$ is upper-bounded by
\begin{align}
	\gamma(b(\varepsilon))
	\sqrt{
	\frac{1}{2} \!\sum_{u,v\in U :\{u,v\}\in E}	
	\!\!\bigg(\!
	(e_{u}\!-\!e_{v})^T L(b(\varepsilon))^{+} \frac{d b(\varepsilon)}{d \varepsilon}
	\!\bigg)^2
	},\!\!
	\label{turning point}
\end{align}
where $\gamma(b):=\max_{u,v\in U} W(b)_{uv}$ for any $b\in\operatorname{Im}(A)$.
Corollary \ref{cor:lap and rws} yields (choosing $f=\frac{d b(\varepsilon)}{d \varepsilon}$, using that $\mathbb{1}^Tf=0$ and $f_v\neq 0$ if and only if $v\in Z$) that $(e_{u}-e_{v})^T L(b(\varepsilon))^{+} \frac{d b(\varepsilon)}{d \varepsilon}$ equals
$
	\sum_{z\in Z} \sum_{t=0}^\infty (P(b(\varepsilon))^t_{uz} \!-\! P(b(\varepsilon))^t_{vz}) 	
	\frac{1}{d(b(\varepsilon))_z} \frac{d b(\varepsilon)_z}{d \varepsilon},
$
so that \eqref{turning point} reads
$$
	\gamma(b(\varepsilon))
	\sqrt{
	\frac{1}{2} \sum_{u,v\in U :\{u,v\}\in E}
	\bigg(
	\sum_{z\in Z} \sum_{t=0}^\infty (P(b(\varepsilon))^t_{uz} - P(b(\varepsilon))^t_{vz}) 	
	\frac{1}{d(b(\varepsilon))_z} \frac{d b(\varepsilon)_z}{d \varepsilon}
	\bigg)^2
	}.
$$
Lemma \ref{lem:Laplacians and random walks bound} yields (choosing $f_z=\frac{1}{d_z} \frac{d b(\varepsilon)_z}{d \varepsilon}$) that the previous quantity is upper-bounded by
$$
	\gamma(b(\varepsilon))
	\,\alpha(b(\varepsilon))\,
	\frac{\lambda(b(\varepsilon))^{d(U,Z)}}{1-\lambda(b(\varepsilon))}
	\sqrt{\sum_{z\in Z} \bigg(\frac{d b(\varepsilon)_z}{d \varepsilon}\bigg)^2 \frac{1}{d(b(\varepsilon))_z}},
$$
with $\alpha(b):= \frac{\max_{v\in U} \sqrt{2 |\mathcal{N}(v) \cap U|}}{\min_{v\in U} \sqrt{d(b)_v}}$ for any $b\in\operatorname{Im}(A)$. Combining everything together, we obtain
$$
	\left\| \frac{d x^\star(b(\varepsilon))}{d \varepsilon} \right\|_{\vec{F}}
	\le c(b(\varepsilon))\,
	\frac{\lambda(b(\varepsilon))^{d(U,Z)}}{1-\lambda(b(\varepsilon))}
	\left\|\frac{d b(\varepsilon)}{d \varepsilon}\right\|_Z,
$$
where $c(b):= \frac{\max_{v\in U} \sqrt{2 |\mathcal{N}(v) \cap U|}}{\min_{v\in U} d(b)_v} \gamma(b)$ for $b\in\operatorname{Im}(A)$. The proof follows by taking the supremum over $b\in\operatorname{Im}(A)$.
\end{proof}

%

The proof of Lemma \ref{lem:Point-to-set decay of correlation} follows analogously from the proof of Theorem \ref{thm:Decay of correlation}, upon exploiting the symmetry of the pseudoinverse of the graph Laplacian.

\section{Localized algorithm}
\label{app:error localized}

This section is devoted to the proof of Theorem \ref{thm:error localized}, which states error bounds for the localized projected gradient descent algorithm. The proof relies on the decay of correlation property established in Theorem \ref{thm:Decay of correlation} for the network flow problem. Recall that the constants appearing in the bounds in Theorem \ref{thm:error localized} do not depend on the choice of the subgraph $\vec G'$ of $\vec G$, but depend only on $\mu$, $Q$, $k_+$, and $k_-$. To prove this type of bounds, we first need to develop estimates to relate the eigenvalues of weighted subgraphs to the eigenvalues of the corresponding unweighted graph.

\subsection{Eigenvalues interlacing}
\label{sec:Eigenvalues Interlacing}
Let $G=(V,E)$ be a simple (i.e., no self-loops, and no multiple edges), connected, undirected graph, with vertex set $V$ and edge set $E$. Let $B\in\mathbb{R}^{V\times V}$ be the vertex-vertex adjacency matrix of the graph, which is the symmetric matrix defined as
 $B_{uv}:=1$ if $\{u,v\}\in E$, $B_{uv}:=0$ otherwise.
If $n:=|V|$, denote by $\mu_{n} \le \mu_{n-1} \le \cdots \le \mu_2 \le \mu_1$ the eigenvalues of $B$.
Let $G'=(V', E')$ be a connected subgraph of $G$. Assume that to each edge $\{u,v\}\in E'$ is associated a non-negative weight $W_{uv}=W_{vu}>0$, and let $W_{uv}=0$ if $\{u,v\}\not\in E$. Let $D'$ be a diagonal matrix with entries $D'_{vv}=\sum_{w\in V'} W'_{vw}$ for each $v\in V'$. Let $P':=D'^{-1}W'$. If $m:=|V'|$, denote by $\lambda'_{m} \le \lambda'_{m-1} \le \cdots \le \lambda'_2 \le \lambda'_1$ the eigenvalues of $P'$. The following proposition relates the eigenvalues of $P'$ to the eigenvalues of $B$. In particular, we provide a bound for the second largest eigenvalue in magnitude of $P'$ with respect to the second largest eigenvalue in magnitude of $B$, uniformly over the choice of $G'$.

\begin{proposition}[Eigenvalues interlacing]
\label{prop:interlacing}
Let $w_- \le W_{vw}\le w_+$ for any $\{v,w\}\in E$, for some constants $w_-,w_+>0$. Let $k_-$ and $k_+$ be, resp., the min and max degree of $G$. Then,
$$
	1 - \frac{w_+k_+}{w_-k_-} + \frac{w_+}{w_-k_-} \mu_{i+n-m}
	\le
	\lambda'_i
	\le 1 - \frac{w_-k_-}{w_+k_+} + \frac{w_-}{w_+k_+} \mu_i.
$$
Therefore, if $\lambda':=\max\{|\lambda'_2|,|\lambda'_{m}|\}$ and $\mu:=\max\{|\mu_2|,|\mu_{n}|\}$, we have
$
	\lambda' \le \frac{w_+k_+}{w_-k_-} -1 + \frac{w_+}{w_- k_-}\mu.
$
\end{proposition}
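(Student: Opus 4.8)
The plan is to reduce everything to a symmetric matrix, invoke the Courant--Fischer min--max principle, and then sandwich the relevant Rayleigh quotient between quantities that depend only on the \emph{unweighted} structure of $G$. Since $P'=D'^{-1}W'$ is similar, through conjugation by $D'^{1/2}$, to the symmetric matrix $\Gamma':=D'^{-1/2}W'D'^{-1/2}$, its eigenvalues are real, and the numbers $1-\lambda'_i$ are exactly the eigenvalues of $I-\Gamma'=D'^{-1/2}L'D'^{-1/2}$, where $L':=D'-W'$. Equivalently, ordering $\lambda'_1\ge\cdots\ge\lambda'_m$, the $1-\lambda'_i$ are the $i$-th smallest generalized eigenvalues of the pencil $(L',D')$, so Courant--Fischer gives
\[
	1-\lambda'_i=\min_{\substack{S\subseteq\mathbb{R}^{V'}\\ \dim S=i}}\ \max_{0\neq y\in S}\ \frac{y^{\top}L'y}{y^{\top}D'y}.
\]

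I would then bound this Rayleigh quotient above and below by quantities involving only $B$. First, since $L'=\sum_{\{u,v\}\in E'}W'_{uv}(e_u-e_v)(e_u-e_v)^{\top}$ with each weight in $[w_-,w_+]$, one has the Loewner sandwich $w_-L'_0\preceq L'\preceq w_+L'_0$, where $L'_0:=D'_0-B'$ is the Laplacian of the unweighted graph $G'$, with adjacency matrix $B'$ and degree matrix $D'_0\succeq k_-I$. Second, each diagonal entry of $D'$ lies in $[w_-k_-,w_+k_+]$ by the weight and degree bounds, so $D'$ is pinched between scalar multiples of the identity. Feeding these into the min--max formula --- applying Loewner monotonicity separately to the numerator and to the denominator, which is forced since $-B'$ makes the numerator indefinite --- yields two-sided estimates of $1-\lambda'_i$ in terms of the eigenvalues $\mu'_1\ge\cdots\ge\mu'_m$ of $B'$: essentially $1-\lambda'_i\ge\frac{w_-}{w_+k_+}(k_--\mu'_i)$ and $1-\lambda'_i\le\frac{w_+}{w_-k_-}(k_+-\mu'_{i+n-m})$. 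Finally, since $G'$ is (as in the applications) the vertex-induced subgraph on $V'$, the matrix $B'$ is a principal $m\times m$ submatrix of $B$, so Cauchy's interlacing theorem gives $\mu_{i+n-m}\le\mu'_i\le\mu_i$; substituting $\mu_i$ into the upper bound for $\lambda'_i$ and $\mu_{i+n-m}$ into the lower bound yields the displayed double inequality. The last assertion follows by specializing to $i=2$ and $i=m$, using $\lambda'_1=1$, and combining absolute values, since both one-sided bounds are dominated by $\frac{w_+k_+}{w_-k_-}-1+\frac{w_+}{w_-k_-}\mu$.

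\textbf{The main obstacle} is not conceptual but is the constant bookkeeping: one must track exactly which of $w_-,w_+,k_-,k_+$ attaches to the numerator versus the denominator at each use of Loewner monotonicity, and make sure Cauchy interlacing is invoked in the direction consistent with whether an upper or a lower bound is being produced. The one genuinely delicate point is that the adjacency term $-B'$ in $L'_0$ is sign-indefinite, so monotonicity cannot be applied to the full ratio $y^{\top}L'y/y^{\top}D'y$ at once; numerator and denominator have to be estimated independently, and it is there --- at the worst-case normalization of $D'$ --- that the combination $w_+k_+/(w_-k_-)$ appears rather than the naive $w_+/w_-$.
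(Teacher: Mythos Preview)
Your proposal is correct and follows essentially the same route as the paper: symmetrize $P'$ to $\Gamma'=D'^{-1/2}W'D'^{-1/2}$, apply Courant--Fischer to the Rayleigh quotient $y^{\top}L'y/y^{\top}D'y$, bound the Laplacian quadratic form via $w_-L'_0\preceq L'\preceq w_+L'_0$ and the weighted degree matrix via scalar multiples of $I$, land on the eigenvalues $\mu'_i$ of $B'$, and then invoke Cauchy interlacing for the principal submatrix $B'=B_{V',V'}$. The paper phrases the variational step as a max--min on $\lambda'_i$ rather than a min--max on $1-\lambda'_i$, but this is the same argument. One small slip: in your intermediate inequality $1-\lambda'_i\le\frac{w_+}{w_-k_-}(k_+-\mu'_{i+n-m})$ the subscript should read $\mu'_i$ before interlacing (or $\mu_{i+n-m}$ after); your next sentence applies interlacing correctly, so this is only notational.
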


\begin{proof}
Consider $\Gamma' := D'^{1/2} P' D'^{-1/2} = D'^{-1/2} W' D'^{-1/2}$. As this matrix is similar to $P'$, it shares the same eigenvalues with $P'$. Let $L':=D'-W'$ be the Laplacian associated to $G'$. The Courant-Fischer Theorem yields
$$
	\lambda'_i = \max_{\substack{S\subseteq\mathbb{R}^m\\\operatorname{dim}(S)=i}} \min_{x\in S} \frac{x^T \Gamma' x}{x^Tx}
	= 1 + \max_{\substack{S\subseteq\mathbb{R}^m\\\operatorname{dim}(S)=i}} \min_{y\in S} \frac{-y^T L' y}{y^TD'y},
$$
where we used that $x^T \Gamma' x= x^Tx - y^TL'y$ with $y:=D'^{-1/2}x$, and that the change of variables $y=D'^{-1/2}x$ is non-singular (note that as $G'$ is connected, then $D'$ has non-zero entries on the diagonal). The Laplacian quadratic form yields
$
	y^T L' y = \frac{1}{2} \sum_{u,v\in V'} W_{uv} (y_u-y_v)^2,
$
which is upper-bounded by
$
	w_+ \frac{1}{2} \sum_{u,v\in V'} B_{uv} (y_u-y_v)^2
	= w_+ y^T \mathcal{L}' y,
$
where $\mathcal{L}'$ is the Laplacian of the \emph{unweighted} graph $G'=(V',E')\equiv(V',E',B')$ with $B':=B_{V',V'}$. Note that we have $\mathcal{L}'=K'-B'$, where $K'$ is diagonal and $K'_{vv}=\sum_{w\in V'}B'_{vw}$ is the degree of vertex $v\in V'$ in $G'$. As $y^TK'y = \sum_{v\in V'} K_{vv} y^2_v \le k_+ y^Ty$, we have
$
	y^T L' y
	\le w_+ k_+ y^T y - w_+ y^T B' y.
$
At the same time, $y^TD'y \ge w_-k_- y^Ty$. Therefore,
$
	\lambda'_i
	\ge 1 - \frac{w_+k_+}{w_-k_-} + \frac{w_+}{w_-k_-} \max_{\substack{S\subseteq\mathbb{R}^m\\\operatorname{dim}(S)=i}} \min_{y\in S} \frac{y^T B' y}{y^Ty},
$
and by the Courant-Fischer Theorem the right-hand side equals $1 - \frac{w_+k_+}{w_-k_-} + \frac{w_+}{w_-k_-} \mu'_i$, where $\mu'_{n} \le \mu'_{n-1} \le \cdots \le \mu'_2 \le \mu'_1$ are the eigenvalues of $B'$. Analogously, it is easy to prove that
$
	\lambda'_i
	\le 1 - \frac{w_-k_-}{w_+k_+} + \frac{w_-}{w_+k_+} \mu'_i.
$
As $B'$ is a principal submatrix of $B$, the eigenvalue interlacing theorem for symmetric matrices yields $\mu_{i+n-m}\le\mu'_i\le \mu_i$, and we have
$
	\alpha + \beta \mu_{i+n-m}
	\le
	\lambda'_i
	\le \gamma + \delta \mu_i,
$
with $\alpha:=1 - \frac{w_+k_+}{w_-k_-}, \beta:=\frac{w_+}{w_-k_-}, \gamma := 1 - \frac{w_-k_-}{w_+k_+},$ and $\delta:=\frac{w_-}{w_+k_+}$. Clearly,
$
	|\lambda'_i|
	\le -\alpha + \beta \max\{ |\mu_{i+n-m}|, |\mu_i| \},
$
so
$
	\max\{|\lambda'_2|,|\lambda'_{m}|\}
	\le -\alpha \!+\! \beta \max\{
	|\mu_{2+n-m}|, |\mu_2|,|\mu_{n}|, |\mu_m|
	\}
	= -\alpha \!+\! \beta \max\{|\mu_2|,|\mu_{n}|\}.
$
\end{proof}

\subsection{Proof of Theorem \ref{thm:error localized}}

We now present the proof of Theorem \ref{thm:error localized}. The proof relies on repeatedly applying Theorem \ref{thm:Decay of correlation} in Section \ref{sec:Optimal Network Flow} (which captures the decay of correlation for the network flow problem) and the fundamental theorem of calculus.

\begin{proof}[Proof of Theorem \ref{thm:error localized}]
Consider the setting of Section \ref{sec:Scale-free algorithm}.\\
\textbf{Bias term.} Let us first bound the bias outside $\vec E'$. Let $n:=|V|$, and for each $b\in\operatorname{Im}(A)$ let $-1\le\lambda_n(b) \le \lambda_{n-1}(b) \le \cdots \le \lambda_2(b) < \lambda_1(b) =1$ be the eigenvalues of $P(b)$. Let $\lambda(b):=\max\{|\lambda_2(b)|, |\lambda_n(b)|\}$ and $\lambda:=\sup_{b\in\operatorname{Im}(A)} \lambda(b)$. Define $b(\varepsilon):=b+\varepsilon p$, for any non-negative real number $\varepsilon \ge 0$. If $e\in \vec E'^C$, then $T'_{b(\varepsilon)}(x^\star(b))_e = x^\star(b)_e$ and
\begin{align*}
	\operatorname{Bias}(\vec G')_e
	&= x^\star(b(1))_e - x^\star(b(0))_e
	= \int_0^1 d\varepsilon \, 
	\frac{d x^\star(b(\varepsilon))_e}{d \varepsilon}.
\end{align*}
By the triangle inequality for the $\ell_2$-norm and Theorem \ref{thm:Decay of correlation},
$$
	\| \operatorname{Bias}(\vec G') \|_{\vec E'^C}
	\le \sup_{\varepsilon\in\mathbb{R}} \left\| \frac{d x^\star(b(\varepsilon))}{d \varepsilon} \right\|_{\vec E'^C}
	\!\le c
	\| p \|
	\frac{\lambda^{d(\Delta(\vec G'),Z)}}{1-\lambda},
$$
where we used that $\sup_{\varepsilon\in\mathbb{R}} \|\frac{d b(\varepsilon)}{d \varepsilon}\|_Z= \|p\|$, as $\frac{d b(\varepsilon)_v}{d\varepsilon}=p_v$ for $v\in Z$ and $\frac{d b(\varepsilon)_v}{d\varepsilon}=0$ for $v\not\in Z$, and $c:=\sqrt{2k_+} Q/k_-$.

Let us now consider the bias inside $\vec E'$. Consider problem \eqref{def:problem_localized} in Section \ref{sec:Scale-free algorithm}.
For any $\varepsilon>0,\theta>0$, define
\begin{align}
	b'(\varepsilon,\theta)
	:= b(\varepsilon)_{V'} - A_{V',\vec E'^C} x^\star(b(\theta))_{\vec E'^C}.
	\label{def:b large}
\end{align}
Without loss of generality, we can index the elements of $V'$ and $\vec E'$ so that the matrix $A$ has the following structure:
$$
	A
	=
	\left( \begin{array}{cc}
	A_{V',\vec E'} & A_{V',\vec E'^C} \\
	A_{V'^C,\vec E'} & A_{V'^C,\vec E'^C}
	\end{array} \right)
	=
	\left( \begin{array}{cc}
	A' & A_{V',\vec E'^C} \\
	\mathbb{0} & A_{V'^C,\vec E'^C}
	\end{array} \right).
$$
For any $x$ that satisfies the flow constraints on $\vec E'^C$ with respect to $b(\varepsilon)$, $A_{V\setminus V', \vec E'^C}x_{\vec E'^C}=b(\varepsilon)_{V\setminus V'}$, we have
\begin{align}
	\left\{ \lim_{t\rightarrow\infty} T'^t_{b+p}(x) \right\}_{\vec E'}
	&= x'^\star( b(1)_{V'} - A_{V',\vec E'^C} x_{\vec E'^C} ).\nonumber
\end{align}
Clearly $x^\star(b)$ satisfies the flow constraints on $\vec E'^C$ with respect to $b(1)$, as $p$ is supported on $V'$ so that $b(\varepsilon)_{V'^C} = b_{V'^C}$. Recalling the definition of $b'(\varepsilon,\theta)$ in \eqref{def:b large}, we then have
$
	(\lim_{t\rightarrow\infty} T'^t_{b+p}(x^\star(b)))_{\vec E'}
	= x'^\star(b'(1,0)).
$
On the other hand, as $x^\star(b(1))$ is a fixed point of the map $T'_{b(1)}$, we can characterize the components of $x^\star(b(1))$ supported on $\vec E'$ as
\begin{align*}
	x^\star(b(1))_{\vec E'}
	&=
	\left\{\lim_{t\rightarrow\infty} T'^t_{b(1)}(x^\star(b(1))\right\}_{\vec E'}
	= x'^\star(b'(1,1)).
\end{align*}
It is easy to check that $b'(\varepsilon,\theta)\in\operatorname{Im}(A')$ for each value of $\varepsilon$ and $\theta$. In fact, as $\vec G'$ is connected by assumption, then $\operatorname{Im}(A')$ corresponds to the subspace of $\mathbb{R}^{V'}$ orthogonal to the all-ones vector $\mathbb{1}$. We have
$
	\mathbb{1}^T b'(\varepsilon,\theta)
	= \mathbb{1}^Tb_{V'} + \varepsilon \mathbb{1}^Tp_{V'} - \mathbb{1}^TA_{V',\vec E'^C} x^\star(b(\theta))_{\vec E'^C}.
$
Note that $\mathbb{1}^Tp_{V'} = 0$ by assumption. Also, $0=\mathbb{1}^Tb = \mathbb{1}^Tb_{V'} + \mathbb{1}^Tb_{V'^C}$ so that $\mathbb{1}^Tb_{V'} = -\mathbb{1}^Tb_{V'^C}$. Analogously, as $\mathbb{1}^TA=\mathbb{0}^T$, we have $\mathbb{1}^TA_{V',\vec E'^C} = - \mathbb{1}^TA_{V'^C,\vec E'^C}$.
Hence, 
$$
	\mathbb{1}^T b'(\varepsilon,\theta)
	= -\mathbb{1}^Tb_{V'^C} + \mathbb{1}^TA_{V'^C,\vec E'^C} x^\star(b(\theta))_{\vec E'^C}
	= \mathbb{0}^T,
$$
where the last equality follows as clearly $A_{V'^C,\vec E'^C} x^\star(b(\theta))_{\vec E'^C} = b_{V'^C}$. Therefore, for $e\in \vec E'$,
\begin{align*}
	\operatorname{Bias}(\vec G')_e
	= \int_0^1 d\theta  
	\, 
	\frac{d x'^\star(b'(1,\theta))_e}{d \theta}.
\end{align*}
For each $b'\in\operatorname{Im}(A')$, let $W'(b')\in\mathbb{R}^{V'\times V'}$ be a symmetric matrix defined as $W'(b')_{uv}=(\frac{\partial^2 f_e(x'^\star(b')_e)}{\partial x_e^2})^{-1}$ if either $e=(u,w)\in \vec E$ or $e=(w,u) \in \vec E$, and $W'(b')_{uv}:=0$ otherwise.
Let $D'(b')\in\mathbb{R}^{V'\times V'}$ be a diagonal matrix with entries $D'(b')_{vv}=\sum_{w\in V'} W'(b')_{vw}$. Let $P'(b'):=D'(b')^{-1}W'(b')$. If $m:=|V|$, let $-1\le\lambda'_m(b') \le \lambda'_{m-1}(b') \le \cdots \le \lambda'_2(b') < \lambda'_1(b') =1$ be the eigenvalues of $P'(b')$ (where this characterization holds as $G'$ is connected by assumption). Define $\lambda'(b'):=\max\{|\lambda'_2(b')|, |\lambda'_m(b')|\}$ and $\lambda':=\sup_{b'\in\operatorname{Im}(A')} \lambda'(b')$. 
Proceeding as above, applying Theorem \ref{thm:Decay of correlation} to the optimization problem defined on $\vec G'$ we get
$$
	\| \operatorname{Bias}(\vec G') \|_{\vec E'}
	\le \sup_{\theta\in\mathbb{R}} \left\| \frac{d x'^\star(b'(1,\theta))}{d \theta} \right\|_{\vec E'},
$$
which is upper-bounded by $c \, \frac{1}{1-\lambda'}
	\sup_{\theta\in\mathbb{R}} 
	\| \frac{\partial b'(1,\theta)}{\partial\theta} \|_{\Delta(\vec G')}$,
where we used that $\frac{\partial b'(\varepsilon,\theta)_v}{\partial\theta}=0$ if $v\in V'\setminus \Delta(\vec G')$, and clearly $d(V',\Delta(\vec G'))=0$ as $\Delta(\vec G')\subseteq V'$.
For $v\in \Delta(\vec G')$ we have
$
	\frac{\partial b'(\varepsilon,\theta)_v}{\partial\theta}
	=
	- \sum_{e\in \vec E'^C} A_{ve} 
	\frac{d x^\star(b(\theta))_{e}}{d \theta}.
$
If $\vec F(v):=\{e\in \vec E'^C: e=(u,v) \text{ or } e=(v,u) \text{ for some } u\in V'^C\}$ denotes the set of edges that are connected to $v$ but do not belong to $\vec E'$, we have
$
	(\frac{\partial b'(1,\theta)_v}{\partial\theta})^2
	\le
	(\sum_{e\in \vec F(v)}  
	|\frac{d x^\star(b(\theta))_{e}}{d \theta}|
	)^2,
$
which by Jensen's inequality admits
$
	|\vec F(v)|
	\sum_{e\in \vec F(v)}
	(\frac{d x^\star(b(\theta))_{e}}{d \theta})^2
$
as a upper bound.
As $\max_{v\in \Delta(\vec G')}|\vec F(v)|\le k_+-1$, applying Theorem \ref{thm:Decay of correlation} as above we get
$$
	\| \frac{\partial b'(1,\theta)}{\partial\theta} \|_{\Delta(\vec G')}
	\le \sqrt{k_+-1}
	\| \frac{d x^\star(b(\theta))}{d\theta} \|_{\vec E'^C}
	\le
	c\sqrt{k_+-1}\,\| p \|\,
	\frac{\lambda^{d(\Delta(\vec G'),Z)}}{1-\lambda}.
$$
Therefore,
$
	\| \operatorname{Bias}(\vec G') \|_{\vec E'}
	\le
	c^2\sqrt{k_+-1}\,\| p \|\,
	\frac{\lambda^{d(\Delta(\vec G'),Z)}}{(1-\lambda')(1-\lambda)}.
$
By the triangle inequality for the $\ell_2$-norm, $\|\operatorname{Bias}(\vec G')\|\le \|\operatorname{Bias}(\vec G')\|_{\vec E'} + \|\operatorname{Bias}(\vec G')\|_{\vec E'^C}$, so we obtain
$$
	\| \operatorname{Bias}(\vec G') \|
	\le 
	c
	( 1
	+
	c\sqrt{k_+-1}
	)
	\left\| p \right\|
	\frac{\lambda^{d(\Delta(\vec G'),Z)}}{(1-\lambda')(1-\lambda)}.
$$
By Proposition \ref{prop:interlacing} we have
$
	\max\{\lambda,\lambda'\} \le \frac{Qk_+}{k_-} -1 + \frac{Q}{k_-} \mu,
$
and the bound for the bias term follows.

\textbf{Variance term.} As
$
	(\lim_{t\rightarrow\infty} T'^t_{b+p}(x^\star(b)))_{\vec E'}
	= x'^\star(b'(1,0)),
$
we get
\begin{align*}
	\| \operatorname{Var}(\vec G',t) \|_{\vec E'}
	&= \
	\| x'^\star(b'(1,0)) - T'^t_{b+p}(x^\star(b))_{\vec E'} \|\\
	&\le e^{- \frac{t}{2Q}} \| x'^\star(b'(1,0)) - x'^\star(b'(0,0)) \|,
\end{align*}
where in the last inequality we used that $x^\star(b)_{\vec E'}=x'^\star(b'(0,0))$. For each $e\in \vec E'$ we have
\begin{align*}
	x'^\star(b'(1,0))_e - 
	x'^\star(b'(0,0))_e
	= \int_0^1 d\varepsilon
	\, 
	\frac{d x'^\star(b'(\varepsilon,0))_e}{d \varepsilon},
\end{align*}
and using the triangle inequality for the $\ell_2$-norm, applying Theorem \ref{thm:Decay of correlation} to the optimization problem defined on $\vec G'$,
$$
	e^{\frac{t}{2Q}} \| \operatorname{Var}(\vec G',t) \|_{\vec E'}
	\!\le \sup_{\varepsilon\in\mathbb{R}} \left\| \frac{d x'^\star(b'(\varepsilon,0))}{d \varepsilon} \right\|_{\vec E'}
	\!\!\!\le \!c
	\| p \|
	\frac{1}{1-\lambda'},
$$
where we used that $\frac{\partial b'(\varepsilon,0)_v}{\partial \varepsilon}=\frac{d b(\varepsilon)_v}{d \varepsilon}=p_v$ for $v\in Z$ and $\frac{d b(\varepsilon)_v}{d\varepsilon}=0$ for $v\not\in Z$, and that $d(V',Z)=0$ as $Z\subseteq V'$. Clearly, $\operatorname{Var}(\vec G',t)_e=0$ for $e\in \vec E'^C$, as $T'_b(x^\star(b))_e = x^\star(b)_e$. Hence, $\| \operatorname{Var}(\vec G',t) \|=\| \operatorname{Var}(\vec G',t) \|_{\vec E'}$ and the proof is concluded as $\lambda' \le \frac{Qk_+}{k_-} -1 + \frac{Q}{k_-} \mu$ by Proposition \ref{prop:interlacing}.
\end{proof}

\end{document}